\newcommand{\margnote}[1]{
\ifthenelse{\boolean{shownotes}}%
{\marginpar{\raggedright\tiny\texttt{#1}}}%
{}%
}
\newcommand{\hole}[1]{
\ifthenelse{\boolean{shownotes}}%
{\begin{center} \fbox{ \rule {.25cm}{0cm}
\rule[-.1cm]{0cm}{.4cm} \parbox{.85\textwidth}{\begin{center}
\texttt{#1}\end{center}} \rule {.25cm}{0cm}}\end{center}}
{}
}
\newtheorem{thm}{Theorem}[section]
\newtheorem*{mainthm}{Theorem A}
\newtheorem*{mainthm2}{Theorem B}
\newtheorem*{mainthm3}{Theorem B'}
\newtheorem*{mainthm4}{Theorem C}
\newtheorem{prop}[thm]{Proposition}
\newtheorem{lem}[thm]{Lemma}
\newtheorem{cor}[thm]{Corollary}
\newtheorem{rem}[thm]{Remark}
\newtheorem{defn}[thm]{Definition}
\newtheorem*{assumption}{Assumption}
\newcommand{\e}{\varepsilon}		       
\newcommand{\R}{\mathbb{R}}
\newcommand{\T}{\mathbb{T}}
\newcommand{\N}{\mathbb{N}}
\newcommand{\Z}{\mathbb{Z}}
\newcommand{\dive}{\mathop{\mathrm {div}}}
\newcommand{\curl}{\mathop{\mathrm {curl}}}
\newcommand{\de}{\mathrm{d}}
\newcommand{\supp}{\mathrm{supp}\,}
\newcommand{\E}{\mathbb{E}}
\numberwithin{equation}{section}
\subjclass[MSC 2020]{35Q35, 35Q49, 76F25, 76W05.}
\keywords{MHD equations; magnetic reconnection; enhanced dissipation.}
\begin{document}

\title[Enhanced dissipation and magnetic reconnection]{Accelerated and fast magnetic reconnection through enhanced resistive dissipation for MHD equations}

\author[G. Ciampa]{Gennaro Ciampa}
\address[G.\ Ciampa]{DISIM - Dipartimento di Ingegneria e Scienze dell'Informazione e Matematica\\ Universit\`a  degli Studi dell'Aquila \\Via Vetoio \\ 67100 L'Aquila \\ Italy}
\email[]{\href{gciampa@}{gennaro.ciampa@univaq.it}}

\author[R. Luc\`a]{Renato Luc\`a}
\address[R. Luc\`a]{Institut Denis Poisson, UMR 7013, CNRS, 
Université d’Orléans, Bâtiment de Mathématiques, rue de
Chartres, F-45100 Orléans.}
\email[]{\href{rluca@}{renato.luca@univ-orleans.fr}}

\maketitle

\begin{abstract}
We consider the phenomenon of magnetic reconnection, namely a change in the topology of magnetic lines, for sufficiently regular solutions of the three-dimensional periodic magnetohydrodynamic (MHD) equations. We provide examples where magnetic reconnection occurs on time scales shorter than the resistive one, due to enhanced dissipation emerging from advective effects. This is the first analytical result where the advection term plays an active role in the reconnection process. A key aspect of our approach is a new estimate for enhanced diffusion of high Sobolev norms, which is of independent interest beyond its application to the MHD equations.
\end{abstract}

\section{Introduction}
In this work we analyze the Cauchy problem for the three-dimensional inviscid incompressible magnetohydrodynamic equations, which is the system of PDEs
\begin{equation}\label{eq:mhd}\tag{MHD}
\begin{cases}
\partial_t u_{\eta}+(u_{\eta}\cdot \nabla)u_{\eta}+\nabla p_{\eta}= (b_{\eta}\cdot \nabla)b_{\eta},\\
\partial_t b_{\eta}+(u_{\eta}\cdot \nabla)b_{\eta}=(b_{\eta}\cdot \nabla)u_{\eta}+\eta\Delta b_{\eta},\\
\dive u_{\eta}=\dive b_{\eta}=0,\\
u_{\eta}(0,\cdot)=u_{\eta}^\mathrm{in},\hspace{0.3cm} b_{\eta}(0,\cdot)=b_{\eta}^\mathrm{in},
\end{cases}
\end{equation}
where the parameter $\eta>0$ denotes the resistivity of the fluid, $u_{\eta}^\mathrm{in}, b_{\eta}^\mathrm{in}:\T^3\to\R^3$ are prescribed initial conditions, and the unknowns are the velocity field $u_{\eta}:[0,T]\times\T^3\to\R^3$, 
the magnetic field $b_{\eta}:[0,T]\times\T^3\to\R^3$, and the total pressure $p_{\eta}:[0,T]\times\T^3\to\R$. Here and in the sequel $\T^3:=\R^3/\Z^3$ denotes the three-dimensional torus. The equations are derived combining the Navier-Stokes equations of fluid dynamics with the Maxwell equations of electromagnetisms and model the motion of an electrically conducting incompressible inviscid fluid, see \cite{Landau}. The well-posedness of the system \eqref{eq:mhd} has been extensively investigated during the last decades, we refer to \cite{DL72, FMRR, Feff, Sch, Secchi, ST}, and references therein, for an overview of results ranging from the viscous and resistive to the ideal case.
In this paper we focus on a phenomenon of particular relevance to plasma physics: the {\em magnetic reconnection}. Magnetic reconnection refers to the process in which oppositely directed magnetic field lines within a plasma break and reattach, resulting in energy transfer and causing a change in the magnetic topology.  Mathematically, it can be defined as a change in the topological structure of the integral lines of the magnetic field $b_{\eta}$. Reconnection phenomena are observed  throughout the universe and play a key role in many dynamic processes in the solar corona, including solar flares, coronal mass ejections, the solar wind and coronal heating. For a physical introduction to the problem, we refer to \cite{MagnPhys, Priest, Schindler} and references therein. Although there is significant numerical and experimental evidence, the phenomenon remains not fully understood analytically.
In the non-resistive case ($\eta = 0$), the Alfven’s Theorem states that the magnetic lines are ``frozen'' into the fluid, meaning that the integral lines of a 
sufficiently regular magnetic field are transported by the fluid flow. More precisely, the flow induced by the velocity field is 
a diffeomorphism which maps integral lines of the magnetic field into integral lines of the magnetic field, at different times.  
The topological structure of the magnetic field is thus preserved under the evolution. The topological stability of the magnetic structure is related to the conservation of the magnetic helicity, which, in the non resistive case ($\eta = 0$), becomes a very subtle matter at low regularities, intimately related to anomalous dissipation phenomena.
We refer to \cite{BBV20,FL19,FLS20} for some very interesting (positive and negative) results in this direction. However, in the resistive case ($\eta>0$), the topology of the magnetic lines is expected to change, even for regular solutions.

Heuristically, one might expect a bound for the reconnection time like $T>1/\eta$, corresponding to a {\em resistive time scale}. However, it is both predicted and observed that the reconnection time is much shorter due to enhanced or anomalous diffusion effects induced by the advection term. For example, the Sweet-Parker model predicts a reconnection time of order $\eta^{-\frac12}$, while the model for resistive tearing instabilities predicts a reconnection time of order $\eta^{-\frac35}$, as detailed in \cite{Schindler}. 
{\color{red}}
The first rigorous mathematical construction of solutions exhibiting magnetic reconnection was provided in \cite{CCL} for the viscous model in both the two- and three-dimensional periodic setting. For a result on $\R^3$ we refer to \cite{LP23}. 
These examples were constructed by perturbing explicit solutions with topological properties that are {\em stable} under sufficiently regular perturbations. However, in the case for stance of \cite{CCL}, the reconnection time is proportional to $1/(\eta N^2)$, where $N$ is a characteristic length scale of the solution, see \cite[Remark 4.1]{CCL}. This leads to the following conclusions:
\begin{itemize}
\item[$i)$] the nonlinearity does not accelerate the process;
\item[$ii)$] even though the result holds for every $\eta>0$, the reconnection does not occur in a turbulent regime.
\end{itemize}

The purpose of this paper is to build upon the construction in \cite{CCL} by providing examples of {\em accelerated reconnection}. Specifically, we introduce the following definition.
\begin{defn}\label{def:fast rec}
A (sufficiently regular) family of solution $(u_{\eta},b_{\eta},p_{\eta})$ of \eqref{eq:mhd} 
shows accelerated magnetic reconnection if there exists $\eta_0\in(0,1)$ such that the following holds. For all $\eta \in (0, \eta_0)$ 
\begin{itemize}
\item  there exist $t_1 = t_1(\eta)$, $t_2 = t_2(\eta)$ such that there is no homeomorphism of $\T^3$ mapping the set of the integral lines of $b_{\eta}(t_1,\cdot)$ into integral lines of $b_{\eta}(t_2,\cdot)$;
\item there exists a continuous increasing function $\lambda_\mathrm{rec}: (0,\eta_0)\to(0,1)$ such that
\begin{equation}\label{lambda}
\lim_{\eta\to 0}\frac{\eta}{\lambda_\mathrm{rec}(\eta)}=0,
\end{equation}
and $|t_2-t_1| \leq \frac{1}{\lambda_\mathrm{rec}(\eta)}$.
\end{itemize}
Moreover, we say that the reconnection is {\em fast} if for any $a>0$ the function $\lambda_\mathrm{rec}$ satisfies
\begin{equation}\label{lambda fast}
\lim_{\eta\to 0}\frac{\eta^a}{\lambda_\mathrm{rec}(\eta)}=0.
\end{equation}
\end{defn}
Note that the rate of reconnection $\lambda_\mathrm{rec}(\eta)$ is essentially the inverse of the reconnection time, namely that time after which we are able to show that the topology of the solution has changed. 

We are now in position to state our first main result.
\begin{mainthm}[Accelerated Reconnection]\label{MainThm}
Let $r > 5/2$. There exists $\eta_0 >0$ sufficiently small such that there exists a family of divergence-free vector fields $(u^\mathrm{in}_\eta,b^\mathrm{in}_\eta)$ in $H^r(\T^3)$ with $0<\eta<\eta_0$, such that the unique local solution $(u_{\eta},b_{\eta},p_{\eta})$ of \eqref{eq:mhd} arising from $(u^\mathrm{in}_\eta,b^\mathrm{in}_\eta)$ shows accelerated magnetic reconnection with rate
$$
\lambda_\mathrm{rec}(\eta) := c_2 \frac{\eta^\frac12}{|\ln \eta|},
$$
for some positive constant $c_2>0$. More precisely, the magnetic field $b_{\eta}(t,\cdot)$ has an equilibrium point for times 
$t \in [0, c_1]$ and has no equilibrium points for times $t \in \left[c_2 \frac{|\ln \eta|}{\eta^\frac12}, 2 c_2 \frac{|\ln \eta|}{\eta^\frac12}\right]$, where the constants $c_1, c_2$ only depends on $r$. 
The statement is structurally stable, in the sense that it still holds 
for all solution with (divergence-free) initial data $(\tilde u^\mathrm{in}_\eta, \tilde b^\mathrm{in}_\eta)$
such that 
\begin{equation}\label{eqPertIntro}
\| \tilde u^\mathrm{in}_\eta - u^\mathrm{in}_\eta\|_{H^r} 
 + \|\tilde b^\mathrm{in}_\eta -  b^\mathrm{in}_\eta \|_{H^r} \leq c(r, \eta)
\end{equation}
with $c(r, \eta)$ sufficiently small.
\end{mainthm}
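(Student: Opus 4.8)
The plan is to build on the perturbative scheme of \cite{CCL}: I would fix an explicit background solution of \eqref{eq:mhd} carrying a topologically robust feature and then show that the resistive evolution destroys it on the accelerated time scale $|\ln\eta|/\eta^{1/2}$. The feature I would track is the presence of an \emph{equilibrium point} of the magnetic field, i.e. a point $x_\ast$ with $b_\eta(t,x_\ast)=0$. This is the right invariant because a zero of $b_\eta$ is a singular (constant) integral line of the flow $\dot x=b_\eta(t,x)$: a homeomorphism carrying the set of integral lines of $b_\eta(t_1,\cdot)$ onto those of $b_\eta(t_2,\cdot)$ would have to send a point-orbit to a point-orbit, which is impossible if $b_\eta(t_2,\cdot)$ is nowhere vanishing. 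Thus Definition~\ref{def:fast rec} reduces to producing a time $t_1\in[0,c_1]$ at which $b_\eta$ has a zero, and a window $[c_2|\ln\eta|/\eta^{1/2},2c_2|\ln\eta|/\eta^{1/2}]$ on which $b_\eta$ is bounded away from $0$.

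For the background I would take the velocity field close to a stationary shear $u\approx(U(x_2),0,0)$, which is an exact steady Euler flow, and split the magnetic field according to the transport operator $U(x_2)\partial_{x_1}$ as $b_\eta=\bar b_\eta+b_\eta^{\mathrm{osc}}$, where $\bar b_\eta$ is the $x_1$-average (the kernel of the shear advection) and $b_\eta^{\mathrm{osc}}$ collects the $x_1$-oscillating modes. Since the spatial mean of $b_\eta$ is conserved, I would arrange $\bar b_\eta^{\mathrm{in}}$ to be nowhere vanishing, while choosing $b_\eta^{\mathrm{osc,in}}$ of comparable amplitude so that $\bar b_\eta^{\mathrm{in}}+b_\eta^{\mathrm{osc,in}}$ cancels at exactly one point, producing a non-degenerate zero at $t=0$. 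By the implicit function theorem this zero persists as long as $b_\eta^{\mathrm{osc}}$ stays of order one, which covers the whole interval $[0,c_1]$.

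The analytic core is then an enhanced-dissipation estimate showing that, under advection by the shear, the oscillating part decays at the rate $\eta^{1/2}$ rather than the resistive rate $\eta$, namely $\|b_\eta^{\mathrm{osc}}(t)\|_{H^r}\lesssim e^{-c\eta^{1/2}t}\|b_\eta^{\mathrm{osc,in}}\|_{H^r}$. Evaluated at $t\sim|\ln\eta|/\eta^{1/2}$ this gives $\|b_\eta^{\mathrm{osc}}(t)\|_{H^r}\lesssim\eta^{\gamma}$ for a power $\gamma>0$ tunable through $c_2$, which is exactly where the logarithmic factor enters. Since $t\ll 1/\eta$ on this window, the mean part decays only negligibly and $\bar b_\eta$ stays nowhere zero; as $r>5/2$ gives $H^r\hookrightarrow C^1$, the field $b_\eta=\bar b_\eta+b_\eta^{\mathrm{osc}}$ is within $O(\eta^{\gamma})$ of a nowhere-vanishing field in $C^0$, hence itself nowhere vanishing on the window. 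Comparing with the zero present at $t=0$ yields the reconnection, with $\lambda_{\mathrm{rec}}(\eta)=c_2\eta^{1/2}/|\ln\eta|$.

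The main obstacle is precisely this enhanced-dissipation estimate in the high Sobolev norm $H^r$, $r>5/2$, for the coupled nonlinear system rather than for a scalar passive field. Linear enhanced dissipation for a shear is classical in $L^2$ (via hypocoercivity or resolvent bounds), but transferring the decay to $H^r$ is delicate: commuting the dissipation mechanism with high derivatives produces commutator terms, involving derivatives of the profile $U$, that can spoil the spectral gap, and the MHD nonlinearities $(u_\eta\cdot\nabla)b_\eta$, $(b_\eta\cdot\nabla)u_\eta$ together with the Lorentz feedback $(b_\eta\cdot\nabla)b_\eta$ on the velocity both excite high modes and couple $\bar b_\eta$ to $b_\eta^{\mathrm{osc}}$, so the decaying oscillating part could in principle be regenerated. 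I would therefore first establish a clean linear $H^r$ enhanced-dissipation bound — the estimate advertised in the abstract — and then close a bootstrap on $[0,2c_2|\ln\eta|/\eta^{1/2}]$ in which the factor $e^{-c\eta^{1/2}t}$ is shown to dominate the nonlinear growth over the whole window, forcing the oscillating amplitude and the perturbation threshold $c(r,\eta)$ in \eqref{eqPertIntro} to be chosen small in a quantitative, $\eta$-dependent way. Structural stability then comes for free from the $C^1$ formulation: ``having a non-degenerate zero'' and ``being nowhere vanishing'' are open conditions in $C^1(\T^3)$, hence stable under any perturbation of the data small enough in $H^r\hookrightarrow C^1$, which is exactly the content of \eqref{eqPertIntro}.
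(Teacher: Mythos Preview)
Your overall strategy is the right one and matches the paper's: build data whose magnetic field has a hyperbolic zero created by an oscillating component, and let shear-driven enhanced dissipation erase that component so that $b_\eta$ becomes nowhere vanishing. There are, however, two concrete gaps between your outline and a working proof.

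\medskip

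\textbf{The $H^r$ enhanced-dissipation bound you write down is too strong.} The estimate $\|b_\eta^{\mathrm{osc}}(t)\|_{H^r}\lesssim e^{-c\eta^{1/2}t}\|b_\eta^{\mathrm{osc,in}}\|_{H^r}$ is not what one can prove; the paper's Theorem~B gives
\[
\|\rho(t)\|_{H^r}\;\lesssim\;\eta^{-r/2}\,e^{-\lambda(\eta)t}\,\|\rho^{\mathrm{in}}\|_{H^r},
\]
with an unavoidable prefactor $\eta^{-r/2}$ coming from the high-frequency part of the Littlewood--Paley decomposition. This prefactor is the \emph{actual} origin of the logarithm in $\lambda_{\mathrm{rec}}$: one needs $t\gtrsim \tfrac{r}{2}|\ln\eta|/\lambda(\eta)$ just to beat it. With your stated bound, reconnection would already occur at $t\sim C/\eta^{1/2}$ with no logarithmic loss, contradicting the statement you are proving. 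Your explanation of ``where the logarithmic factor enters'' therefore misidentifies the mechanism.

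\medskip

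\textbf{The paper does not run a nonlinear bootstrap; it decouples the problem via an exact $2\tfrac12$-D solution.} With $\tilde u=(U_1,U_2,0)$ a stationary $2$D Euler shear and $\tilde b=(0,0,M+\rho(t,x_1,x_2))$, the pair $(\tilde u,\tilde b)$ solves \eqref{eq:mhd} \emph{exactly}, and $\rho$ obeys the \emph{linear} scalar advection--diffusion equation. Enhanced dissipation is thus applied once, to this linear $\rho$, with no nonlinear feedback to fight. Because $\tilde b$ has only a third component, its zero set is a degenerate line; a further perturbation $m^{\mathrm{in}}$ of size $\varepsilon$ is added to produce an isolated hyperbolic zero. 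The resulting perturbation $(v,m)$ of the exact solution is then controlled \emph{not} by enhanced dissipation but by the crude $H^r$ stability estimate (Theorem~\ref{thm:stab}), which yields long-time control only because $\varepsilon$ is taken exponentially small in $\eta^{-r/2}$ (see \eqref{scelta epsilon}). This is precisely how the paper sidesteps the difficulty you flag: the model is inviscid, so there is no damping mechanism on the velocity, and the hoped-for inequality ``$e^{-c\eta^{1/2}t}$ dominates the nonlinear growth'' cannot be established directly. One instead makes the perturbation so small at $t=0$ that even exponential-in-time growth keeps it below $M/2$ throughout the reconnection window. Your proposed bootstrap on the coupled system, with $b^{\mathrm{osc}}$ of order one, would have to control an undamped Euler-type evolution over times $|\ln\eta|/\eta^{1/2}\to\infty$, and nothing in your scheme does that.

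\medskip

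In short: keep your topological picture, but replace the nonlinear bootstrap by the exact-reference-solution-plus-tiny-perturbation architecture, and correct the $H^r$ estimate to include the $\eta^{-r/2}$ prefactor; that prefactor is what forces the $|\ln\eta|$ in the rate.
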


We now outline the main ideas of the construction and compare them with our previous result in \cite{CCL}. To simplify the notations, we drop the subscript $\eta$ in the solution/initial datum; we will return on this point later, see Remark \ref{remark dati iniziali}. We will construct $(u^\mathrm{in},b^\mathrm{in})$ such that:
\begin{itemize}
\item $b^\mathrm{in}$ has at least one hyperbolic zero $x^*$.  
\item there exists $T_{max} \gg \frac{|\ln \eta|}{\eta^{1/2}}$ and a unique solution $(u,b) \in C([0, T_{max}); H^{r}(\T^3))$ 
 to the \eqref{eq:mhd} with initial datum $(u^\mathrm{in},b^\mathrm{in})$;  
\item the solution satisfies $|b(t,x)|>0$ for any $x\in\T^3$ and $ t \in \left[c_2 \frac{|\ln \eta|}{\eta^\frac12}, 2 c_2 \frac{|\ln \eta|}{\eta^\frac12}\right]$.
\item In fact, the solution $b(t,x)$ will have one hyperbolic zero $x^*(t)$ for all $t \in [0,c_1]$, with $c_1$ independent on $\eta$ (structural stability with respect to time).  
\end{itemize}
The loss of equilibrium points implies that the topology of the magnetic lines has changed. Moreover, we point out that the zeros of the magnetic field are commonly observed as reconnection sites, see \cite{Priest, Schindler}.
We recall that hyperbolic zeros are stable under smooth perturbations, which means that the solution $b(t,\cdot)$ will still have at least one zero for all times $t \in [0,c_{\eta}]$ where $c_{\eta}$ is a sufficiently small constant. However, we will be able to show the remarkable fact that this constant $c_{\eta} = c_1$ is actually~$\eta$-independent,
which ensures the observability of the reconnection for arbitrarily small values of the resistivity $\eta$. 

Once we have proved that the vector field $b(t^*,\cdot)$ has no zeros for some $t^* > c_1$, the reconnection must have occurred at some intermediate time. Providing a quantitative bound for $t^*$ in terms of $\eta$ will be then the core of the proof: to show that the reconnection occurs in a regime which is faster than the resistive one, we will prove that for $0 < \eta \ll 1 $, we can take $t^* =\frac{1}{\lambda_\mathrm{rec}(\eta)}$ with $\lambda_\mathrm{rec}(\eta)$ that satisfies \eqref{lambda}, namely for a reconnection time scale that is much smaller than the linear diffusive one. In fact we can choose $t^*$ in the interval
$\left[c_2 \frac{|\ln \eta|}{\eta^\frac12}, 2 c_2 \frac{|\ln \eta|}{\eta^\frac12}\right]$.

To achieve this, the choice of the initial velocity field $u^\mathrm{in}$ will be crucial. The building blocks of our construction will be velocity fields that are {\em dissipation enhancing}, as detailed in Definition \ref{def:diss enh}. This is the key difference from our previous constructions in \cite{CCL}. In this latter, we do not assume any specific information on the initial velocity field and we simply set $u^\mathrm{in}=0$, performing a perturbative argument on vector fields that simultaneously solve the heat equation and the stationary Euler equations. For this type of solution, the advective term does not accelerate the reconnection process. In contrast, a velocity field $U$ is {\em dissipation enhancing} if the solutions to the corresponding linear advection-diffusion equation dissipate energy at a rate faster than the purely diffusive one. More precisely, we consider the equation
\begin{equation}\label{eq:ad-intro}\tag{AD}
\begin{cases}
\partial_t \rho+U\cdot\nabla \rho=\eta\Delta \rho,\\
\rho(0,\cdot)=\rho^\mathrm{in},
\end{cases}
\end{equation}
where $U:\T^2\to\R^2$ is an autonomous divergence-free vector field and $\rho^\mathrm{in}$ a mean-free initial datum. By using the incompressibility and Poincarè's inequality, a simple $L^2$ estimate gives that
\begin{equation}\label{decadimento calore}
\|\rho(t,\cdot)\|_{L^2}\leq e^{-C\eta t} \|\rho^\mathrm{in}\|_{L^2},
\end{equation}
for some constant $C>0$ depending on the constant in Poincarè's inequality. The inequality \eqref{decadimento calore} implies that the energy decays exponentially fast in time but it does not take into account effects coming from the advection term. Indeed, the same decay rate holds for the heat equation.
By defining the {\em dissipation time} $t_\mathrm{dis}$ to be the smallest $t>0$ such that
$$
\|\rho(t,\cdot)\|_{L^2}\leq \frac12\|\rho^\mathrm{in}\|_{L^2},
$$ 
the inequality \eqref{decadimento calore} implies $t_\mathrm{dis}\lesssim \eta^{-1}$. Thus, one can define enhanced dissipation as situations where $t_\mathrm{dis}\ll\eta^{-1}$. The enhanced effect of autonomous vector fields has been analyzed in different settings, we refer to \cite{BW, BCZ, BCZGH, BCZM, CKRZ, CZDE, CZD, CZDr, CLS, DZ, Dolce, DJS, FI, MZ, W, Z} and references therein.
More quantitatively, a vector field $U$ is dissipation enhancing with {\em dissipation rate }$\lambda(\eta)$ if
\begin{equation}\label{def:enhanced_intro}
\|\rho(t,\cdot)\|_{L^2} \lesssim e^{- \lambda(\eta)t} \|\rho^\mathrm{in}\|_{L^2}, \qquad \mbox{ with }\frac{\eta}{\lambda(\eta)}\to 0,\quad \mbox{ as }\eta\to 0,
\end{equation}
for every $\rho^\mathrm{in}\in L^2$ with zero streamline-average, with $\lambda(\eta)$ being a continuous positive increasing function. The restriction on the initial datum implies that $\rho^\mathrm{in}$ is orthogonal to the kernel of the transport operator $U \cdot \nabla$ and it ensures interaction with the mixing mechanism induced by the flow. However, this property is generally not preserved by the advection-diffusion equation \eqref{eq:ad-intro}, unless the vector field possesses particular symmetries. This is the case of shear flows or radial vector fields, where the streamline-average is invariant under the evolution. Therefore, while zero streamline-average at time zero is a necessary condition to expect enhanced dissipation, it is not always sufficient, and further structural assumptions on the flow may be required.\\

To obtain a quantitative bound for the reconnection time, the following theorem  is crucial.

\begin{mainthm2}[Enhanced dissipation of Sobolev norms]\label{teo:enhHr}
Let $U\in W^{S,\infty}(\T^d)$, for some integer $S\geq 1$ and $d\geq 2$, be a divergence-free vector field with dissipation enhancing rate $\lambda(\eta)$ and let $\rho^\mathrm{in}\in H^r(\T^d)$ for some integer $r\geq 0$ with zero streamlines-average. Let $\rho$ be the unique solution of \eqref{eq:ad-intro} with initial datum $\rho^\mathrm{in}$. 
There exists $\eta_0\in(0,1)$ such that the following holds for all 
$\eta \in (0, \eta_0)$.
For any $0\leq s\leq \min\{r,S\}$ and for all $\varepsilon >0$ 
\begin{equation}\label{eq:enhanced hr-intro}
\|\rho(t,\cdot)\|_{H^s}\lesssim_{\varepsilon,s} \frac{ 1+ \|U\|_{W^{S,\infty}}^s}{\eta^{s(\frac12 + \varepsilon)}} e^{- \lambda(\eta)t}\|\rho^\mathrm{in}\|_{H^s}.
\end{equation} 
The $\varepsilon$-loss can be removed if $U \in B^{S}_{\infty,2}(\T^d)$. In this case one has
\begin{equation}\label{eq:enhanced hrBesov-intro}
\|\rho(t,\cdot)\|_{H^s} \lesssim_{s} \frac{ 1+  \|U\|_{B^{S}_{\infty,2} }^s}{\eta^{s/2 }} e^{-  \lambda(\eta)t}\|\rho^\mathrm{in}\|_{H^s}.
\end{equation}
\end{mainthm2}

Concretely, the condition $B^{S}_{\infty,2}$ is, for instance, satisfied if $f \in W^{S', \infty}$ for some with $S' > S$, we refer to
to \eqref{Besov-Sobolev1}-\eqref{Besov-Sobolev2} for the definition of the Besov and Sobolev norms.

In Section \ref{Section:Proof} we will prove a more general version of this theorem, namely Theorem B', which holds also for non-autonomous vector fields such as those constructed in \cite{ACM, BBP, BBP2, BCZG, Coo, CooIS, CZNF, ELM, EZ, FI, MHSW22}, thus it provides an enhanced dissipation rate for high norms with a logarithmic $\lambda$, as a consequence of \cite[Theorem 2.5]{CZDE}, using exponential mixers.
We emphasize that Theorem B, and Theorem B' as well, is of independent interest, as it extends any quantitative enhanced dissipation estimates from the $L^2$ setting to higher Sobolev norms, with the (expected) drawback of having a constant which is unbounded in $\eta$ on the right hand side of \eqref{eq:enhanced hr-intro}. Specifically, the abstract framework in \cite{CZDE} requires working in a Hilbert space $H$ such that the solution $\rho^0$ of the inviscid problem
\begin{equation}\label{eq:te-intro}
\begin{cases}
\partial_t \rho^0+U\cdot\nabla \rho^0=0,\\
\rho^0(0,\cdot)=\rho^\mathrm{in},
\end{cases}
\end{equation}
satisfies the conservation
$$
\|\rho^0(t,\cdot)\|_{H}=\|\rho^\mathrm{in}\|_H,
$$
which does not hold for the choice $H=H^s$ with $s> 0$.
\\

The choice of the initial velocity field $u^\mathrm{in}$ in Theorem A, along with the quantitative rate in \eqref{eq:enhanced hr-intro}, provides an estimate for the reconnection time. Furthermore, the construction is highly flexible and can be adapted to different spatial domains, we will revisit this point at the end of Section \ref{Section:Proof}.  
It is important to note that the result is structurally stable in the following sense: the solution retains a hyperbolic equilibrium for all times $t \in [0, c_1]$, where the constant $c_1$ is independent on $\eta$. On the other hand, the solution has no equilibrium points for times $t \in \left[c_2 \frac{|\ln \eta|}{\eta^\frac12}, 2 c_2 \frac{|\ln \eta|}{\eta^\frac12}\right]$, see \eqref{FinalGoal}. Note that this time interval becomes arbitrarily large as $\eta \to 0^{+}$. In fact, it will be clear by the proof that, given any target time $T > c_2 \frac{|\ln \eta|}{\eta^\frac12} $, we can even choose the initial datum in such a way that the solution has no regular equilibrium points for all times $t \in \left[c_2 \frac{|\ln \eta|}{\eta^\frac12}, T\right]$. Furthermore, this topological picture remains unchanged for sufficiently small $H^r(\T^3)$-perturbations of the initial datum, the size of the perturbation being $\eta$-dependent (see \eqref{eqPertIntro}). 

We emphasize that we consider the inviscid model \eqref{eq:mhd} to simplify the construction of our building blocks, see Section \ref{Sec:perturbative}, but the proof can be generalized to the viscous model through a perturbative argument. 
We also remark that, while models based on Sweet-Parker layers or tearing instabilities exhibit reconnection on time scales faster than the purely diffusive regime, they are still considered slow. In contrast, physically {\em fast reconnection} involves mechanisms, such as Hall effects or kinetic processes, that yield rates that weakly depend on the resistivity, for example of logarithmic type.
Our approach can be extended to provide examples of accelerated reconnection for the Hall-MHD system as well. However, in our construction the Hall term does not contribute to the acceleration of the process, so we restrict our attention to \eqref{eq:mhd} in order to minimize technicalities.

Since our construction relies on steady solutions of the 2D Euler equations which are dissipation enhancing, the resulting reconnection rate cannot exceed polynomial scaling in the resistivity. Indeed, in \cite{BM} it has been proved that two-dimensional autonomous velocity fields cannot mix faster than $1/t$ and then the enhanced dissipation must also be polynomial, see \cite{CZDE}. Achieving faster rates would require non-autonomous fields. If we consider the viscous MHD model and we allow for the presence of a forcing term in the velocity equation, our strategy yields a method to construct solutions exhibiting reconnection on faster time scales, in particular of logarithmic type as predicted by the {\em Petschek's model}, see \cite{Priest}. Notice that a reconnection rate like $\lambda_\mathrm{rec}(\eta)=1/|\ln \eta|$ satisfies \eqref{lambda fast} in Definition \ref{def:fast rec}. To achieve this, we need to consider a {\em stochastic force}, relying on the results proved in \cite{BBP3, BBP2}. In particular, we consider the system
\begin{equation}\label{eq:mhd2-intro}\tag{F-MHD}
\begin{cases}
\partial_t u_{\eta}+(u_{\eta} \cdot \nabla)u_{\eta}+\nabla p_{\eta}= \Delta u_{\eta}+(b_{\eta}\cdot \nabla)b_{\eta}+f,\\
\partial_t b_{\eta}+(u_{\eta} \cdot \nabla)b_{\eta}=(b_{\eta} \cdot \nabla)u_{\eta}+\eta\Delta b_{\eta},\\
\dive u_{\eta}=\dive b_{\eta}=0,\\
u_{\eta}(0,\cdot)=u_{\eta}^\mathrm{in},\hspace{0.3cm} b_{\eta}(0,\cdot)=b_{\eta}^\mathrm{in},
\end{cases}
\end{equation}
where $f:[0,T]\times\T^3\to\T^3$ is a given force. The magneto-hydrodynamic equations driven by stochastic forces (in both the velocity and magnetic field equations) were introduced to model interactions between a conducting fluid and a magnetic field in the presence of random perturbations, see \cite{Landau}.
In order to construct solutions of \eqref{eq:mhd2-intro} exhibiting {\em fast reconnection}, the starting point will be velocity fields that are solutions to the two-dimensional Navier-Stokes equations with a stochastic force as those in \cite{BBP}, instead of stationary Euler flows. Then, with the same strategy of Theorem A we can prove the following, see Section \ref{sec:stocastica}.
\begin{mainthm4}[Fast reconnection]
Let $r > 5/2$. There exists $\eta_0 >0$ sufficiently small such that there exists a family of divergence-free vector fields $(u^\mathrm{in}_\eta,b^\mathrm{in}_\eta)$ in $H^r(\T^3)$ with $0<\eta<\eta_0$, and a stochastic force $f\in L^\infty((0,T);H^r(\T^3))$, defined on some given filtered probability space $(\Omega,\mathcal{F},\mathcal{F}_t,\mathbb{P})$, such that the following holds: for any $\delta>0$ there exists a subset $\bar \Omega\subset \Omega$ with $\mathbb{P}(\Omega\setminus\bar\Omega)\leq \delta$ such that the unique local solution $(u_{\eta},b_{\eta},p_{\eta})$ of \eqref{eq:mhd2} arising from $(u^\mathrm{in}_\eta,b^\mathrm{in}_\eta)$ shows fast magnetic reconnection with rate 
$$
\lambda_\mathrm{rec}(\eta) := \frac{c_2}{|\ln \eta|},
$$
for some positive deterministic constant $c_2>0$, and for all realizations $\omega\in\bar\Omega$. More precisely, the magnetic field $b_{\eta}(t,\cdot)$ has an equilibrium point for times $t \in [0, c_1]$ and has no equilibrium points for times 
$$
t \in \left[c_2 |\ln \eta|, 2 c_2|\ln \eta|\right].
$$  
The statement is structurally stable, in the sense that it still holds 
for all solution with (divergence-free) initial data $(\tilde u^\mathrm{in}_\eta, \tilde b^\mathrm{in}_\eta)$
such that 
\begin{equation}\label{eqPertIntro}
\| \tilde u^\mathrm{in}_\eta - u^\mathrm{in}_\eta\|_{H^r} 
 + \|\tilde b^\mathrm{in}_\eta -  b^\mathrm{in}_\eta \|_{H^r} \leq c(r, \eta)
\end{equation}
with $c(r, \eta)$ sufficiently small.
\end{mainthm4}

It is worth mentioning that Alfven's Theorem is still true for the non-resistive ($\eta = 0$) {\em stochastic viscous} model \eqref{eq:mhd2-intro}. We conclude this introduction by pointing out recent works on {\em vortex reconnection} for the three-dimensional Navier-Stokes equations \cite{CL, ELP}. Magnetic reconnection and vortex reconnection share fundamental similarities in their underlying physical principles, as both involve the interaction and reconfiguration of flow structures due to topological changes.  In vortex reconnection, vortices (regions of rotating fluid) interact, merge, or split, leading to a redistribution of vorticity and changes in the flow structure. Both phenomena are highly nonlinear and can exhibit complex, chaotic behavior, often involving multiple scales of interaction. It would be interesting to produce examples of solutions of the three-dimensional Navier-Stokes equations that reconnect more rapidly, in the spirit of this work. We also point out the recent work \cite{Boss}, in which stationary solutions of the 3D Euler system with H\"older regularity are constructed (using convex integration techniques) that share the same topology of a given smooth divergence-free vector field.
 
Finally, regarding topological results for MHD equations, we also mention the recent work \cite{EP-top}, where a sophisticated counting argument was employed to prove obstructions to topological relaxation.

\subsection*{Outline of the paper.} In Section \ref{Sec:preliminari}, we introduce the notation and provide some background results. In Section \ref{Sec:perturbative}, we construct explicit solutions to the \eqref{eq:mhd} equations and we prove a stability result for strong solutions by appropriately quantifying the time of existence. This will ensure that the solutions we construct exist over a time interval longer than the reconnection time. Finally, in Section \ref{Section:Proof}, we first prove the enhanced dissipation effect for high Sobolev norms in Theorem B', and then Theorem A. Finally, in Section \ref{sec:stocastica} we prove the fast reconnection result for the forced viscous system.

\section{Preliminaries}\label{Sec:preliminari}
In this section we fix the notations and we recall some preliminary results.

\subsection{Tools from Harmonic Analysis}
For any $s\in\R$ we define the inhomogeneous symbol $\langle D\rangle ^s:=(\mathrm{Id}-\Delta)^{s/2}$ and the homogeneous symbol $D^s:=(-\Delta)^\frac{s}{2}$. We recall that $D^s$ and $\langle D\rangle^s$ are Fourier multiplier operators on $\T^d$ with symbols $(1 + 4\pi^2|k|^2)^{2s}$ and $(2\pi|k|)^s$, respectively. This means that
$$
\langle D\rangle ^s f(x):=\sum_{k\in\Z^d}(1 + 4\pi^2|k|^2)^{2s}\hat{f}(k)e^{i2\pi k\cdot x}, \qquad D^s f(x):=\sum_{k\in\Z^d}(2\pi|k|)^s\hat{f}(k)e^{i2\pi k\cdot x}.
$$
For any $s\in\R$ we define the Sobolev space $H^s(\T^3)$ and the homogeneous Sobolev space $\dot{H}^s(\T^3)$ as the closure of the Schwartz functions under their respective norm (or semi-norm in the case of $\dot{H}^s$)
$$
\|f\|_{H^s}:=\|\langle D\rangle ^s f\|_{L^2},\qquad \|f\|_{\dot{H}^s}:=\|D^s f\|_{L^2}.
$$
We recall the classical Kato-Ponce commutator estimate, see \cite{Kato Ponce}.
\begin{lem}\label{lem:kato ponce}
Let $s>0$ and $1<p<\infty$. Then, for any $f,g\in C^{\infty}(\T^3)$
\begin{equation}
\|\langle D \rangle^s(fg)-f\langle D \rangle^s(g)\|_{L^p}\leq C\|\nabla f\|_\infty\|\langle D \rangle^{s-1}g\|_{L^p}+\|\langle D \rangle^{s}f\|_{L^p}\| g\|_{L^\infty}.
\end{equation}
\end{lem}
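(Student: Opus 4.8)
The plan is to prove this classical Kato--Ponce commutator estimate by a Littlewood--Paley/paraproduct analysis, isolating the genuine commutator gain in the low-high frequency interaction and treating the remaining interactions as lower-order. Writing $\langle k\rangle := (1+4\pi^2|k|^2)^{1/2}$, the $k$-th Fourier coefficient of $\langle D\rangle^s(fg)-f\langle D\rangle^s g$ equals
\begin{equation*}
\sum_{\ell\in\Z^3} m(k,\ell)\,\hat f(\ell)\,\hat g(k-\ell), \qquad m(k,\ell) := \langle k\rangle^s - \langle k-\ell\rangle^s ,
\end{equation*}
so the object is a bilinear Fourier multiplier whose symbol $m$ carries the cancellation encoded in the commutator. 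First I would split the frequency sum into the low-high region, where the $f$-frequency $|\ell|$ is much smaller than the $g$-frequency $|k-\ell|$, and the complementary region where $|\ell|$ dominates. In the rigorous implementation these sharp cutoffs are replaced by a smooth Bony decomposition $fg=T_f g+T_g f+R(f,g)$.

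In the low-high region the mean value theorem applied to $\xi\mapsto\langle\xi\rangle^s$ along the segment joining $k-\ell$ to $k=(k-\ell)+\ell$ gives the pointwise bound $|m(k,\ell)|\lesssim |\ell|\,\langle k-\ell\rangle^{s-1}$, using $|\nabla_\xi\langle\xi\rangle^s|\lesssim\langle\xi\rangle^{s-1}$ together with $|\ell|\lesssim |k-\ell|\sim |k|$ there. I would then factor out the symbols $\ell$ (corresponding, up to bounded Riesz-type operators, to $\nabla f$) and $\langle k-\ell\rangle^{s-1}$ (corresponding to $\langle D\rangle^{s-1}g$), and check that the normalized symbol $\sigma(k,\ell):=m(k,\ell)\,|\ell|^{-1}\langle k-\ell\rangle^{-(s-1)}$ obeys the Coifman--Meyer/Hörmander--Mikhlin derivative bounds $|\partial_k^\alpha\partial_\ell^\beta\sigma(k,\ell)|\lesssim(|k|+|\ell|)^{-|\alpha|-|\beta|}$ uniformly in this region. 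The bilinear Coifman--Meyer multiplier theorem on $\T^3$ (deduced from the Euclidean statement by transference, or proved directly via a square-function argument) then yields the contribution $C\|\nabla f\|_{L^\infty}\|\langle D\rangle^{s-1}g\|_{L^p}$ for $1<p<\infty$.

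In the complementary region one has $\langle k\rangle\lesssim\langle\ell\rangle$ and $\langle k-\ell\rangle\lesssim\langle\ell\rangle$, so the crude bound $|m(k,\ell)|\lesssim\langle\ell\rangle^s$ suffices and $m(k,\ell)\langle\ell\rangle^{-s}$ is again a standard bounded symbol; here $\langle\ell\rangle^s$ is assigned to $f$ (producing $\langle D\rangle^s f$) while $g$ is left in $L^\infty$, giving the second contribution $C\|\langle D\rangle^s f\|_{L^p}\|g\|_{L^\infty}$. The high-high (resonant) part $R(f,g)$, where the output frequency is much smaller than both inputs, is absorbed into this same term, since there $|m|\lesssim\langle\ell\rangle^s$ as well. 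Adding the two regions produces the claimed inequality.

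The hard part will be the verification of the Coifman--Meyer symbol bounds for $\sigma$ and the boundedness of the associated bilinear operator on the torus; the delicate point is the diagonal behaviour $|\ell|\sim|k-\ell|$, which is precisely why the sharp frequency cutoffs must be softened into the smooth Bony decomposition so that no single term sees a non-smooth symbol. Once the multiplier bounds are secured, the passage from the dyadic block estimates to the full $L^p$ bound is routine via the Littlewood--Paley square-function inequality, whose validity for $1<p<\infty$ is exactly where the restriction on $p$ enters; the endpoint cases $p\in\{1,\infty\}$ would require a substitute (e.g. Hardy or $\mathrm{BMO}$ spaces) and are excluded here.
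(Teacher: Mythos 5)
The paper does not prove this lemma at all: it is quoted as the classical Kato--Ponce commutator estimate with a citation to \cite{Kato Ponce}, so your proposal has to be measured against the classical argument, which it essentially reproduces (bilinear symbol $m(k,\ell)=\langle k\rangle^s-\langle k-\ell\rangle^s$, smooth Bony splitting, mean-value gain in the low--high regime, Coifman--Meyer multiplier theorem). Your low--high analysis is sound: on the support of the smooth cutoff all the arguments $k$, $k-\ell$, $k-\ell+t\ell$ are comparable to $|k|+|\ell|$, so writing $m=\sum_j i\ell_j\int_0^1(\partial_j\langle\cdot\rangle^s)(k-\ell+t\ell)\,\de t$ and normalizing by $\langle k-\ell\rangle^{s-1}$ genuinely yields a Coifman--Meyer symbol for every $s>0$, and $L^\infty\times L^p\to L^p$ boundedness of such bilinear operators gives the term $\|\nabla f\|_{L^\infty}\|\langle D\rangle^{s-1}g\|_{L^p}$.

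The gap is in the complementary region, where your claim that $m(k,\ell)\langle\ell\rangle^{-s}$ ``is again a standard bounded symbol'' is false in general, and --- contrary to your diagnosis --- it is not cured by softening the cutoffs. In the high--low regime $|k-\ell|\ll|\ell|$ the normalized symbol contains the factor $\langle k-\ell\rangle^{s}\langle\ell\rangle^{-s}$; a derivative of order $|\beta|$ falling on the first factor produces $\langle k-\ell\rangle^{s-|\beta|}\langle\ell\rangle^{-s}$, which must be compared with the Coifman--Meyer requirement $(|k|+|\ell|)^{-|\beta|}\sim\langle\ell\rangle^{-|\beta|}$: this fails as soon as $|\beta|>s$ (already at $|\beta|=1$ when $0<s<1$), because near $k=\ell$ the factor $\langle k-\ell\rangle^{s}$ has size $O(1)$ but is rough at the global frequency scale $|k|+|\ell|$; the same defect appears for $\langle k\rangle^s$ in the resonant piece where the output frequency is small. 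A pointwise bound on the symbol does not give $L^p$ boundedness of the bilinear operator --- the derivative decay is exactly what is missing. The standard repair, which completes your outline, is to estimate these pieces directly as paraproducts rather than via the multiplier theorem: for $T_{\langle D\rangle^s g}f=\sum_N\bigl(P_{\leq N/8}\langle D\rangle^s g\bigr)\bigl(P_N f\bigr)$ use the Bernstein-type bound $\|P_{\leq N}\langle D\rangle^s g\|_{L^\infty}\lesssim N^s\|g\|_{L^\infty}$ together with the Littlewood--Paley square function of $f$ (the summands have output frequency $\sim N$), which yields precisely $\|g\|_{L^\infty}\|\langle D\rangle^s f\|_{L^p}$; the resonant part $R(f,g)$ is summed the same way, the hypothesis $s>0$ providing the room needed to sum the low-frequency outputs. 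With this replacement your argument is complete, and the restriction $1<p<\infty$ enters exactly where you say, through the square-function inequality.
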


The following inequality is known as {\em fractional Leibniz rule}, see \cite{Grafakos}.
\begin{lem}\label{lem:fractional}
Let $s>0$, $1<r<\infty$ and $1<p_1,q_1,p_2,q_2\leq \infty$ such that
$$
\frac1r=\frac{1}{p_1}+\frac{1}{q_1}=\frac{1}{p_2}+\frac{1}{q_2}.
$$
Then, there exists a constant $C>0$ (depending on $s,r,p_1,q_1,p_2,q_2$) such that for any $f,g\in C^{\infty}(\T^3)$\begin{align}
\|D^s(fg)\|_{L^r}\leq C\left(\|f\|_{L^{p_1}}\|D^s g\|_{L^{q_1}}+\|D^sf\|_{L^{p_2}}\| g\|_{L^{q_2}}\right),\\
\|\langle D\rangle^s(fg)\|_{L^r}\leq C\left(\|f\|_{L^{p_1}}\|\langle D\rangle^s g\|_{L^{q_1}}+\|\langle D\rangle^sf\|_{L^{p_2}}\| g\|_{L^{q_2}}\right).
\end{align}
\end{lem}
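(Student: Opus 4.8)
\emph{Plan of the proof.} The natural approach is a Littlewood--Paley (Bony) paraproduct decomposition, carried out directly on $\T^3$. Fix a dyadic partition of unity and let $\Delta_j$ denote the projection onto frequencies $|k|\sim 2^j$ for $j\geq 1$, with $\Delta_0$ collecting the block $|k|\lesssim 1$ (in particular the zero mode), and set $S_j=\sum_{j'<j}\Delta_{j'}$. I would first treat the homogeneous operator $D^s$ and then deduce the $\langle D\rangle^s$ statement, since on the high-frequency blocks $j\geq 1$ one has $\langle D\rangle^s\Delta_j$ comparable to $2^{js}$ exactly as $D^s\Delta_j$, while the low block contributes only $\lesssim \|f\|_{L^{p_1}}\|g\|_{L^{q_1}}$-type terms that are absorbed into the right-hand side by H\"older. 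Writing $fg=T_fg+T_gf+R(f,g)$, with $T_fg=\sum_j (S_{j-2}f)\,\Delta_j g$, $T_gf=\sum_j (S_{j-2}g)\,\Delta_j f$, and the resonant term $R(f,g)=\sum_{|j-k|\leq 1}\Delta_j f\,\Delta_k g$, splits the estimate into pieces governed by distinct frequency interactions.

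For the paraproduct $T_fg$, each summand $(S_{j-2}f)\Delta_j g$ is spectrally supported in an annulus $|k|\sim 2^j$, so $D^s$ acts like the scalar $2^{js}$ and, by almost orthogonality, the $L^r$ norm is controlled by the square function $\big\|(\sum_j 2^{2js}|S_{j-2}f|^2|\Delta_j g|^2)^{1/2}\big\|_{L^r}$. Bounding $|S_{j-2}f|$ pointwise by the Hardy--Littlewood maximal function $\mathcal{M}f$, pulling it out, and applying H\"older with $\tfrac1r=\tfrac1{p_1}+\tfrac1{q_1}$ gives $\|\mathcal{M}f\|_{L^{p_1}}\,\|(\sum_j 2^{2js}|\Delta_j g|^2)^{1/2}\|_{L^{q_1}}$, which the maximal theorem ($p_1>1$) and the Littlewood--Paley characterization of $\|D^sg\|_{L^{q_1}}$ ($1<q_1<\infty$) turn into $\|f\|_{L^{p_1}}\|D^sg\|_{L^{q_1}}$, i.e.\ the first term on the right. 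By symmetry $T_gf$ produces the second term $\|D^sf\|_{L^{p_2}}\|g\|_{L^{q_2}}$.

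The delicate piece is the resonant term, which I rewrite as $R(f,g)=\sum_j \Delta_j f\,\widetilde\Delta_j g$ with $\widetilde\Delta_j=\Delta_{j-1}+\Delta_j+\Delta_{j+1}$: here $\Delta_j f\,\widetilde\Delta_j g$ is supported in a ball $|k|\lesssim 2^j$ rather than an annulus, so $D^s$ no longer reproduces a clean factor $2^{js}$. The standard remedy is to expand $D^sR=\sum_m \Delta_m D^s R$ and observe that $\Delta_m D^s(\Delta_j f\,\widetilde\Delta_j g)$ is nonzero only for $2^m\lesssim 2^j$, contributing $\lesssim 2^{ms}\|\Delta_j f\,\widetilde\Delta_j g\|$; since $s>0$, summation over $m\lesssim j$ is a convergent geometric series dominated by $2^{js}$, after which one distributes $2^{js}$ onto either factor and closes again by H\"older and the square function. \textbf{This is the step where the hypothesis $s>0$ is essential}, and it is the main technical obstacle. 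A second source of care are the endpoint exponents $p_i=\infty$ or $q_i=\infty$ allowed by the statement, where the maximal theorem and the square-function characterization degenerate and must be replaced by direct $L^\infty$ bounds on the low-frequency factor (the constraint $r<\infty$ rules out both exponents in a pair being infinite simultaneously). Finally, the passage from $\R^d$ to $\T^3$ requires only adapting the continuous Littlewood--Paley calculus to discrete frequencies and handling the zero mode separately; alternatively, one may invoke the Euclidean Kato--Ponce inequality (in the form of \cite{Grafakos}) together with a periodization/transference argument, which bypasses the paraproduct bookkeeping entirely.
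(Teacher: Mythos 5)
The paper gives no proof of this lemma: it is stated as a known result, quoted directly from \cite{Grafakos}, so there is no in-paper argument to compare yours against; the relevant comparison is with the standard literature proof, which your sketch reproduces. In the non-endpoint range $1<p_i,q_i<\infty$ your outline is correct as written: the Bony decomposition, the treatment of the two paraproducts via $|S_{j-2}f|\lesssim \mathcal{M}f$, H\"older, the maximal theorem and the square-function characterization of $\|D^sg\|_{L^{q_1}}$, and the treatment of the resonant term using the ball (rather than annulus) frequency support together with $s>0$ to sum the geometric series over $m\lesssim j$, is exactly the textbook argument. The reduction of $\langle D\rangle^s$ to $D^s$ plus lower-order terms, the separate handling of the zero mode on $\T^3$, and the alternative of transferring the Euclidean inequality by periodization are all sound as well.

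The one genuine soft spot is the endpoint case, which you flag but misdiagnose. When the $L^\infty$ exponent falls on the \emph{underived} factor ($p_1=\infty$ or $q_2=\infty$), your remedy works: one bounds $|S_{j-2}f|\leq\|f\|_{L^\infty}$ directly, and the square function acts only on the derivative factor, which then lies in $L^{q_1}$ with $q_1=r\in(1,\infty)$. But the statement also permits $q_1=\infty$ (resp. $p_2=\infty$), i.e.\ a term of the form $\|f\|_{L^{r}}\|D^sg\|_{L^{\infty}}$: there the $L^\infty$ norm sits on the \emph{high-frequency} factor carrying the derivative, the Littlewood--Paley characterization of $\|D^sg\|_{L^\infty}$ is false, and ``direct $L^\infty$ bounds on the low-frequency factor'' does not apply, since the low-frequency factor is the one in $L^r$. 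Closing this endpoint is genuinely nontrivial --- it is essentially the content of the endpoint Kato--Ponce results of Grafakos--Oh and of Bourgain--Li --- and your paraproduct sketch does not reach it. Two mitigating remarks: your fallback of invoking the Euclidean inequality in the form proved in \cite{Grafakos} together with transference does cover it; and in this paper the lemma is only ever applied with the $L^\infty$ norm on the underived factor (see the bullet estimates in the proof of Theorem \ref{thm:stab}), so the delicate endpoint is never actually used.
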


We now recall how to adapt the classical Littlewood-Paley decomposition to periodic functions. We use the notations of \cite{ChG09, Tao book}.
Let $\varphi\in \mathcal{S}(\R^3)$ be a Schwartz function that the Fourier transform $\widehat{\varphi}$ satisfies 
\begin{equation}
\widehat{\varphi}(\xi)=\begin{cases}
1\qquad \mbox{for }|\xi|\leq 1,\\
0 \qquad \mbox{for }|\xi|>2.
\end{cases}
\end{equation}
For $N >0$ we define the function $\varphi_N(x):=N^{3}\varphi(N x)$, and  
for~$f \in L^2(\T^3)$:
\begin{equation}\label{def:Sj}
P_{\leq N} f(x) := \sum_{k \in \Z^3} \widehat{\varphi_N}(k) \, \widehat{f}(k)\, e^{i k \cdot x} 
\end{equation}
and
\begin{equation}
P_N := P_{\leq N} - P_{\leq N/2}. \label{def:deltaj}
\end{equation}
We denote with $\N$ the set of non-negative integers and $\N_*:=\N\setminus \{0\}$.
We have the identity (in the $L^2$ sense, or point-wise if $f \in H^{s}(\T^3)$ with $s > 3/2$):
$$
f = P_{\leq 1} + \sum_{N \in 2^{\N_*}} P_N f .
$$
Note that the Fourier coefficients of $P_N f$ are zero outside the annulus 
$$
\Big\{ k \in\R^3: \frac12 N<|\xi|<2N\Big\},
$$
and the Fourier coefficients of $P_{\leq 1} f$ are zero outside the ball $\{\xi\in\R^3: |\xi|<2 \}$.\\
On the other hand we also have  (this is the main identity usually used to prove the Poisson summation formula)
$$
\sum_{k \in \Z^3} \varphi_N(x+k)  = \sum_{k \in \Z^3} e^{i k\cdot x} \, \widehat{\varphi_N}(k). 
$$
Thus letting 
$$
\Phi_N(x) := \sum_{k \in \Z^3} \varphi_N(x+k),
$$
we can rewrite
$$
P_{\leq N} f = \Phi_N * f. 
$$
Since 
$$
\| \Phi_N \|_{L^{p}(\T^3)} \simeq \| \varphi_N \|_{L^{p}(\R^3)} \simeq N^{3(1 - \frac1p)}, \qquad N \geq 1,
$$ 
we can recover the classical Bernstein inequality in the periodic setting, as long as $N \geq 1$. We also define
$$P_{> N} := \mathrm{Id} - P_{\leq N}$$ 
and we recall some useful telescoping identities
$$
P_{\leq N}f=\sum_{M\leq N}P_M f,\qquad P_{> N}f=\sum_{M>N}P_M f, \qquad N, M \in 2^{\N}.
$$
We are now in position to state the Bernstein's inequalities, see \cite{Tao book}.
\begin{lem}
Let $N \geq 1$. Then, for any $s\geq0$ and $1\leq p\leq q\leq\infty$, the following inequalities hold.
\begin{align}
\|P_{\geq N} f\|_{L^p}&\lesssim_{s,p} C N^{-s}\|D^sP_{\geq N} f\|_{L^p},\label{eq:bern1}\\
\|D^sP_{\leq N} f\|_{L^p}&\lesssim_{s,p} C N^s\|P_{\leq N} f\|_{L^p},\\
\|P_{\leq N} f \|_{L^q}&\lesssim_{p,q}N^{\frac3p-\frac3q}\|P_{\leq N} f \|_{L^p}.
\end{align}
Moreover, if $N \in 2^{\N_*}$ then
\begin{align}
\|P_N D^{\pm s}f\|_{L^p}&\sim_{p,s} N^{\pm s}\|P_N f\|_{L^p},\\
\|P_N f \|_{L^q}&\lesssim_{p,q}N^{\frac3p-\frac3q}\|P_N f \|_{L^p}.\label{eq:bern5}
\end{align}
\end{lem}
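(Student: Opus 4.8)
The plan is to derive all five inequalities from one mechanism: realize each operator as a convolution against a kernel obtained by rescaling a fixed, frequency-localized bump, and then apply Young's convolution inequality, reading off the powers of $N$ from the scaling relation $\|\Phi_N\|_{L^p(\T^3)}\simeq N^{3(1-1/p)}$ recorded above. This relation holds precisely because $N\geq 1$, so that the periodized kernel is comparable to its Euclidean model via the Poisson summation identity stated earlier. The only structural inputs are that $P_N f$ is frequency-supported in the annulus $\{N/2<|\xi|<2N\}$ while $P_{\leq N}f$ and $P_{\geq N}f$ are supported in $\{|\xi|<2N\}$ and $\{|\xi|>N/2\}$ respectively, and that on such a band the homogeneous symbol $|\xi|^{\pm s}$ is comparable to $N^{\pm s}$.

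First I would treat the two $L^p$--$L^q$ bounds (the third and fifth displayed inequalities). Since $g:=P_N f$ is frequency-supported in an annulus of size $\sim N$, one may reproduce it as $g=\widetilde\Phi_N * g$, where $\widetilde\Phi_N$ is the periodization of a fattened bump whose Fourier transform equals $1$ on that annulus, so that $\|\widetilde\Phi_N\|_{L^r}\simeq N^{3(1-1/r)}$ by scaling. Choosing $r$ through $1+\tfrac1q=\tfrac1r+\tfrac1p$, equivalently $1-\tfrac1r=\tfrac1p-\tfrac1q$, Young's inequality gives
$$\|g\|_{L^q}\leq\|\widetilde\Phi_N\|_{L^r}\,\|g\|_{L^p}\lesssim N^{3/p-3/q}\|g\|_{L^p},$$
which is \eqref{eq:bern5}. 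The ball case for $P_{\leq N}$ is identical, using a reproducing kernel with Fourier transform supported in $\{|\xi|<4N\}$ and equal to $1$ on $\{|\xi|<2N\}$.

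Next come the derivative estimates. For \eqref{eq:bern1} I would write $P_{\geq N}f=D^{-s}\bigl(D^sP_{\geq N}f\bigr)$ and realize $D^{-s}$ on the support $\{|\xi|>N/2\}$ by the multiplier $m(\xi)=|\xi|^{-s}\rho(2\xi/N)$, where $\rho$ is smooth, equals $1$ for $|\xi|\geq 1/2$ and vanishes near the origin; there $m(\xi)=|\xi|^{-s}$. Rescaling, $m(\xi)=N^{-s}\nu(\xi/N)$ with $\nu(\zeta)=|\zeta|^{-s}\rho(2\zeta)$ a fixed smooth symbol of order $-s$, whose inverse Fourier transform lies in $L^1(\R^3)$ (integrable near the origin for every $s>0$, rapidly decaying at infinity by smoothness of the symbol). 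Hence the periodized kernel has $L^1(\T^3)$-norm $\simeq N^{-s}$, and Young with $r=1$ yields \eqref{eq:bern1}. The bound $\|D^sP_{\leq N}f\|_{L^p}\lesssim N^s\|P_{\leq N}f\|_{L^p}$ is handled the same way, except that now the multiplier $|\xi|^s$ is genuinely singular at the origin; I would therefore split $P_{\leq N}=P_{\leq 1}+\sum_{1<M\leq N}P_M$, bound $D^s$ on the finite block $P_{\leq 1}$ trivially (its symbol $(2\pi|k|)^s$ is bounded on $|k|\leq 2$), and on each remaining annulus use the nonsingular compactly supported multiplier $N^s\mu(\xi/N)$ as above. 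Finally, the equivalence $\|P_N D^{\pm s}f\|_{L^p}\sim N^{\pm s}\|P_N f\|_{L^p}$ follows by applying the annular kernel bounds (where $|\xi|^{\pm s}\sim N^{\pm s}$ and no origin issue arises) in both directions.

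The hard part will be the origin singularity of the homogeneous symbol $|\xi|^{\pm s}$ and the verification that the rescaled multipliers produce genuinely integrable kernels with $N$-uniform constants. Away from the origin---that is, for $P_N$ and $P_{\geq N}$---the symbol can be taken smooth and of definite order, so its Euclidean kernel lies in $L^1(\R^3)$, and the Poisson summation identity transfers this to $\T^3$ with a comparable, $N$-independent norm as soon as $N\geq1$. The only genuinely delicate point is thus the low-frequency contribution to the estimate $\|D^sP_{\leq N}f\|_{L^p}\lesssim N^s\|P_{\leq N}f\|_{L^p}$, which is precisely why it is isolated as the finite-dimensional block $P_{\leq 1}$.
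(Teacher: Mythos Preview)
The paper does not prove this lemma itself but simply cites Tao's book, and your argument via rescaled convolution kernels and Young's inequality is exactly the standard proof given there. One minor slip: in the $D^s P_{\leq N}$ step, the multiplier on each annulus $P_M$ with $1<M\leq N$ should be $M^s\mu(\xi/M)$ rather than $N^s\mu(\xi/N)$; the final factor $N^s$ then comes from the geometric sum $\sum_{1<M\leq N}M^s\lesssim_s N^s$, which is consistent with the rest of your outline.
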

We also recall that, by Plancharel's Theorem, the following holds 
\begin{equation}\label{definizione besov}
\|f\|_{\dot{H}^s}\sim_{p,d}\big(\sum_N N^{2s}\|P_N f\|_{L^2}^2\big)^\frac12,\qquad \|f\|_{H^s}\sim_{p,d}\|P_{\leq 1}f\|_{L^2}+ \big(\sum_{N>1} N^{2s}\|P_N f\|_{L^2}^2\big)^\frac12.
\end{equation}
Furthermore, a central feature of this decomposition is given by the Littlewood-Paley inequality
\begin{equation}
\|f\|_{L^p}\sim_{p,d}\|\big(\sum_N |P_N f|^2\big)^\frac12\|_{L^p},
\end{equation}
which holds for all $1<p<\infty$, see \cite[Theorem 6.1.2]{Grafakos}.\\

One can use the Littlewood-Paley projections also as a tool to define Besov-Spaces, as follows 

\begin{equation}\label{Besov-Sobolev1}
\| f \|_{B^{s}_{p,q}}^q 
 :=  \| P_{\leq 1} f\|_{L^{p}}^q + \sum_{N  \in 2^{\N_*}} N^s \| P_N f\|_{L^{p}}^q, \qquad
\| f \|_{B^{s}_{p,\infty}} 
 :=  \| P_{\leq 1} f\|_{L^{p}} + \sup_{N  \in 2^{\N_*}} N^s \| P_N f\|_{L^{p}}.
\end{equation}
We remark that the $W^{s, \infty}$-Sobolev norms is defined by interpolation from 
\begin{equation}\label{Besov-Sobolev2}
\|f\|_{W^{s, \infty}} := \sup_{\alpha : |\alpha| \leq s} \| \partial^{\alpha} f \|_{L^\infty}, \alpha \in \N^{3}, s \in \N,
\end{equation}
satisfies $\|f\|_{W^{s, \infty}} \simeq_{s} \| f \|_{B^{s}_{\infty,\infty}}$.

We now recall the following result, see \cite[Lemma 2.97]{BCD}.
\begin{lem}[Commutator estimate 1]
There exists a constant $C>0$ such that for any Lipschitz function $f$ with gradient in $L^p$ and any function $g\in L^q$, we have
\begin{equation}
\|[P_N,f]g\|_{L^r}\leq CN^{-1}\|\nabla f\|_{L^p}\|g\|_{L^q},\qquad \mbox{ with }\,\,\frac{1}{p}+\frac{1}{q}=\frac{1}{r},
\end{equation}
for any dyadic number $N\in 2^{\N}$.
\end{lem}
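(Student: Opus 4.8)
\emph{Proof proposal.} The plan is to write the commutator as a single convolution integral against the Littlewood--Paley kernel and to read off the gain $N^{-1}$ from the cancellation already encoded in the commutator together with the scaling of the kernel. Recall from the construction above that $P_N$ acts on $\T^3$ as convolution with the periodization $K_N = \sum_{k\in\Z^3}\psi_N(\cdot+k)$ of $\psi_N := \varphi_N - \varphi_{N/2}$, and that $\psi_N(x) = N^3\psi(Nx)$ with $\psi(w) := \varphi(w) - 2^{-3}\varphi(w/2) \in \mathcal{S}(\R^3)$. Extending $f$ and $g$ to $\Z^3$-periodic functions on $\R^3$ and using $\int_{\T^3}K_N(x-y)\phi(y)\,\de y = \int_{\R^3}\psi_N(x-y)\phi(y)\,\de y$, the starting point is the identity
\[
[P_N,f]g(x) = P_N(fg)(x) - f(x)\,P_N g(x) = \int_{\R^3}\psi_N(z)\,\big(f(x-z)-f(x)\big)\,g(x-z)\,\de z ,
\]
obtained by the change of variables $z = x-y$ and by noting that $P_N(fg)$ and $f\,P_N g$ share the same kernel, so only the difference $f(x-z)-f(x)$ survives.

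Since this difference vanishes at $z=0$, I would extract the cancellation through the fundamental theorem of calculus, which is available because $f$ is Lipschitz:
\[
f(x-z)-f(x) = -\int_0^1 z\cdot\nabla f(x-\tau z)\,\de\tau \qquad \text{a.e.}
\]
Substituting this into the previous display, taking the $L^r(\T^3)$ norm in $x$, and applying Minkowski's integral inequality gives
\[
\|[P_N,f]g\|_{L^r} \leq \int_0^1\!\!\int_{\R^3}|\psi_N(z)|\,|z|\;\big\|\nabla f(\cdot-\tau z)\,g(\cdot-z)\big\|_{L^r(\T^3)}\,\de z\,\de\tau .
\]
For each fixed $z$ and $\tau$, Hölder's inequality with $\tfrac1p+\tfrac1q=\tfrac1r$ together with translation invariance of the $L^p$ and $L^q$ norms on $\T^3$ bounds the inner norm by $\|\nabla f\|_{L^p}\|g\|_{L^q}$, a quantity independent of $z$ and $\tau$. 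It then only remains to estimate the kernel integral: the scaling $\psi_N(z)=N^3\psi(Nz)$ and the substitution $w=Nz$ yield
\[
\int_{\R^3}|\psi_N(z)|\,|z|\,\de z = N^{-1}\int_{\R^3}|\psi(w)|\,|w|\,\de w = C\,N^{-1},
\]
the last integral being finite because $\psi\in\mathcal{S}(\R^3)$. Combining the three displays produces exactly the claimed bound $\|[P_N,f]g\|_{L^r}\leq C N^{-1}\|\nabla f\|_{L^p}\|g\|_{L^q}$.

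The only genuinely delicate point is that we assume merely $\nabla f\in L^p$ rather than $\nabla f\in L^\infty$: this rules out the naive pointwise estimate $|f(x-z)-f(x)|\leq\|\nabla f\|_{L^\infty}|z|$ and forces the fundamental-theorem-of-calculus-plus-Minkowski route, whose whole point is that the $L^p$ norm of $\nabla f$ is preserved under the translation $x\mapsto x-\tau z$. A minor bookkeeping issue is the passage between the torus kernel $K_N$ and its unperiodized counterpart $\psi_N$ on $\R^3$; this is harmless since $f$ and $g$ are periodic and the Schwartz tails of $\psi_N$ are absolutely summable. I emphasize that, in contrast to several commutator arguments, the vanishing mean of the kernel plays no role here: the required cancellation comes entirely from the difference $f(x-z)-f(x)$ intrinsic to the commutator.
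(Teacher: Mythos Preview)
Your proof is correct and is essentially the standard argument: the paper does not prove this lemma itself but cites \cite[Lemma~2.97]{BCD}, and the proof there proceeds exactly as you do---write the commutator as a convolution against the localized kernel, insert the fundamental theorem of calculus for the difference $f(x-z)-f(x)$, then apply Minkowski, H\"older with translation invariance, and the scaling $\int|\psi_N(z)||z|\,\de z \simeq N^{-1}$. One small clarification on your closing remark: the Lipschitz hypothesis is not merely a regularity cushion but is precisely what justifies the fundamental theorem of calculus along segments (absolute continuity on lines), while the $L^p$ bound on $\nabla f$ is what enters the final estimate; you already say this earlier, so the last paragraph slightly overstates the ``delicacy''.
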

We remark again that the action of the projection operator $P_N$ in the physical space corresponds to a convolution with a smooth mollifier. The following commutator estimate also applies, which in a sense is reminiscent of the DiPerna-Lions' commutator estimate \cite{DPL}.
\begin{lem}[Commutator estimate 2]\label{lem:commutatore}
There exists a constant $C>0$ such that for any Lipschitz function $f$ with gradient in $L^p$ and any function $g\in L^q$, we have
\begin{equation}
\|[P_N,f\cdot\nabla]g\|_{L^r}\leq C\|\nabla f\|_{L^p}\|g\|_{L^q},\qquad \mbox{ with }\,\,\frac{1}{p}+\frac{1}{q}=\frac{1}{r},
\end{equation}
for any dyadic number $N\in 2^{\N}$.
\end{lem}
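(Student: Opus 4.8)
The plan is to run the classical DiPerna--Lions commutator computation in physical space, keeping the resulting bound uniform in $N$ (we only need the estimate, not the strong convergence $[P_N,f\cdot\nabla]g\to 0$ that the same argument would also yield). Recall that each projection acts as a convolution $P_N h=\Psi_N\ast h$ against the periodized kernel $\Psi_N(x)=\sum_{k\in\Z^3}\psi_N(x+k)$, where $\psi_N(x)=N^3\psi(Nx)$ and $\psi:=\varphi-8^{-1}\varphi(\cdot/2)$ is a fixed Schwartz function. By density it suffices to prove the bound for smooth $g$.

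First I would expand the commutator explicitly. Writing (with summation over the repeated index $i$)
\begin{equation}
P_N(f\cdot\nabla g)(x)=\int_{\T^3}\Psi_N(x-y)\,f_i(y)\,\partial_{y_i}g(y)\,\de y
\end{equation}
and integrating by parts in $y$ to move the derivative off $g$ (using $\partial_{y_i}\Psi_N(x-y)=-\partial_{x_i}\Psi_N(x-y)$), then subtracting $f(x)\cdot\nabla P_N g(x)$, one arrives at the splitting
\begin{equation}
[P_N,f\cdot\nabla]g(x)=\int_{\T^3}\partial_{x_i}\Psi_N(x-y)\,\big[f_i(y)-f_i(x)\big]\,g(y)\,\de y-\int_{\T^3}\Psi_N(x-y)\,(\dive f)(y)\,g(y)\,\de y=:I(x)+II(x).
\end{equation}

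The term $II$ is immediate: since $\|\Psi_N\|_{L^1(\T^3)}\lesssim\|\psi\|_{L^1(\R^3)}$ uniformly in $N\geq 1$, Young's convolution inequality followed by Hölder's inequality with $1/r=1/p+1/q$ gives $\|II\|_{L^r}\lesssim\|(\dive f)\,g\|_{L^r}\lesssim\|\dive f\|_{L^p}\|g\|_{L^q}\lesssim\|\nabla f\|_{L^p}\|g\|_{L^q}$. For the main term $I$, after the change of variables $z=x-y$ I would expand the increment along the segment from $x$ to $y$ by the fundamental theorem of calculus, $f_i(x-z)-f_i(x)=-\int_0^1 z_j\,\partial_j f_i(x-sz)\,\de s$, lifting to $\R^3$ and taking the representative $z\in[-1/2,1/2)^3$ so that $|z|$ is the distance on the torus. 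Substituting, applying Minkowski's integral inequality in $(s,z)$, and then Hölder (translations being isometries of $L^p(\T^3)$), everything collapses to the single kernel moment
\begin{equation}
\|I\|_{L^r}\lesssim\Big(\int_{\T^3}|z|\,|\nabla\Psi_N(z)|\,\de z\Big)\,\|\nabla f\|_{L^p}\,\|g\|_{L^q}.
\end{equation}

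The heart of the matter is that this moment is bounded uniformly in $N$, which is exactly where the extra derivative on the kernel is compensated. Differentiating produces $\nabla\psi_N=N^4(\nabla\psi)(N\cdot)$, but the weight $|z|$ together with the rescaling $w=Nz$ recovers precisely one power of $N$, so the principal ($k=0$) contribution equals $\int_{\R^3}|w|\,|\nabla\psi(w)|\,\de w<\infty$, a finite constant independent of $N$ since $\psi$ is Schwartz; the remaining periodization terms ($k\neq 0$) are negligible uniformly in $N\geq 1$ because their arguments $N(z+k)$ stay bounded away from $0$ and $\psi$ decays rapidly. Hence $\int_{\T^3}|z|\,|\nabla\Psi_N(z)|\,\de z\lesssim 1$, and combining with the bound on $II$ proves the lemma. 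I expect the only real obstacle to be bookkeeping: handling the periodization of the kernel and the torus geometry in the Taylor expansion (so that $|z|$ is genuinely the torus distance), and verifying that the rescaling leaves no residual power of $N$; the integration by parts itself requires only the routine density argument to be legitimate for $g\in L^q$.
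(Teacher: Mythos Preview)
The paper does not actually prove this lemma: it is stated as a known result, with the remark that ``the action of the projection operator $P_N$ in the physical space corresponds to a convolution with a smooth mollifier'' and that the estimate is ``reminiscent of the DiPerna--Lions commutator estimate \cite{DPL}''. Your proposal is precisely the argument the paper is alluding to, carried out in detail in the periodic setting; the computation is correct, and the key point---that the kernel moment $\int_{\T^3}|z|\,|\nabla\Psi_N(z)|\,\de z$ is bounded uniformly in $N$ because the extra factor of $N$ from differentiating is exactly cancelled by the weight $|z|$ under the rescaling $w=Nz$---is identified and handled properly, including the periodization tails.
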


\subsection{Enhanced dissipation}
We consider the Cauchy problem for the linear advection-diffusion equation
\begin{equation}\label{eq:ad}\tag{AD}
\begin{cases}
\partial_t\rho+U\cdot\nabla\rho=\eta\Delta\rho,\\
\rho(0,\cdot)=\rho^\mathrm{in},
\end{cases}
\end{equation}
where $U : \T^2 \to \R^2$ is a given autonomous divergence-free vector field, $\rho^\mathrm{in}:\T^2\to\R$ is a given mean-free initial datum and $\eta>0$ is a diffusion parameter. Notice that the average of the solution is preserved by the equation, so that there is no loss of generality in the mean-free assumption. We also assume that the velocity field and the initial datum are smooth. Under these circumstances it is well-known that the equation \eqref{eq:ad} admits a unique smooth solution.\\

We now provide a precise quantitative definition of enhanced dissipation, as in \cite{BCZM}.
\begin{defn}\label{def:diss enh}
Let $\eta_0\in(0,1)$ and $\lambda: (0,\eta_0)\to(0,+\infty)$ be a continuous increasing function such that
\begin{equation}
\lim_{\eta\to 0}\frac{\eta}{\lambda(\eta)}=0.
\end{equation}
The velocity field $U$ is {\em dissipation enhancing} at rate $\lambda(\eta)$ if there exists $C\geq 1$ only depending on $U$ such that if $\eta\in(0,\eta_0)$ then for every $\rho^\mathrm{in}\in L^2(\T^2)$ with zero streamlines-average we have the enhanced dissipation estimate
\begin{equation}\label{def:diff enhanced}
\|\rho(t,\cdot)\|_{L^2} \leq C e^{-\lambda(\eta)t} \|\rho^\mathrm{in}\|_{L^2},
\end{equation}
for every $t\geq 0$.
\end{defn}
In \cite{CKRZ} it has been shown that the enhanced dissipation property is equivalent to the non-existence of non-trivial $H^1$-eigenfunctions of the operator $U\cdot\nabla$. In particular, functions that are constant on streamlines must be excluded since they are eigenfunctions. To be more precise, a function $\rho^\mathrm{in}\in L^2(\T^2)$ has zero streamlines-average if
$$
\fint_\gamma \rho^\mathrm{in}\de s=0,
$$
for all integral curves $\gamma$ of $U$. This condition implies that $\rho^\mathrm{in}$ is not constant on the streamlines of $U$ and thus it does not belong to the kernel of the operator $U\cdot\nabla$. As already pointed out in the introduction, this property is not in general preserved under the evolution given by the equation \eqref{eq:ad}, unless one does not assume that $U$ posses particular symmetries. This is the case, for example, of shear flows. We recall a result from \cite{BCZ} which will be used in the proof of Theorem A.\\

Let $U(x_1,x_2)=(f(x_2),0)$ be a shear flow and define the functional space
\begin{equation}\label{def:hshear}
H_\mathrm{shear}:=\left\{\rho\in L^2(\T^2): \int_\T \rho(x_1,x_2)\de x_1=0,\,\,\mbox{ for a.e. }x_2\in \T\right\}.
\end{equation}
The vector field $U$ is a stationary solution of the 2D Euler equations. Assume that $f\in C^{n_0+1}(\T)$ has a finite number of critical points, denoted by $\bar y_1,...,\bar y_N$ and where $n_0\in\N_*$ denotes the maximal order of vanishing of $f'$ at the critical points, namely, the minimal integer such that
$$
f^{(n_0)}(\bar y_i)\neq0,\qquad \mbox{for all }i=1,...,N.
$$
We further assume that $\int_\T f(y) \de y=0$. We will use a result proved in \cite{BCZ}, by following the formulation given in \cite{CZDr, CZG}.
\begin{thm}\label{teo:bcz}
There exists a positive constant $C>0$ depending on $f$ such that the enhanced diffusion rate on $H_\mathrm{shear}$ is
\begin{equation}
\lambda(\eta)=C\,\eta^\frac{n_0}{n_0+2}.
\end{equation}
\end{thm}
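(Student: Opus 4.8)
The plan is to exploit the special structure of shear flows, which diagonalizes the advection–diffusion equation in the $x_1$-frequency, and then to reduce the enhanced dissipation estimate to a uniform-in-$k$ resolvent bound for a family of one–dimensional non-selfadjoint operators. First I would take the Fourier transform in $x_1$. Since $\rho \in H_{\mathrm{shear}}$ has vanishing $x_1$-average, we write $\rho(t,x_1,x_2) = \sum_{k \neq 0} \rho_k(t,x_2) e^{2\pi i k x_1}$, and \eqref{eq:ad} decouples into the family
$$
\partial_t \rho_k = - \mathcal{L}_k \rho_k, \qquad \mathcal{L}_k := 2\pi i k f(x_2) - \eta \partial_{x_2}^2 + \eta (2\pi k)^2,
$$
acting on functions of $x_2 \in \T$. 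By Plancherel it suffices to bound the semigroups $e^{-t\mathcal{L}_k}$ on $L^2(\T)$ uniformly in $k \neq 0$: if $\|e^{-t \mathcal{L}_k}\|_{L^2 \to L^2} \leq C e^{-\lambda(\eta) t}$ for all $k \neq 0$, then summing the squares recovers \eqref{def:diff enhanced}.

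Second, I would invoke a quantitative Gearhart–Prüss type theorem. The operators $\mathcal{L}_k$ are accretive, since the real part of $\langle \mathcal{L}_k \psi, \psi\rangle$ equals $\eta\|\partial_{x_2}\psi\|_{L^2}^2 + \eta(2\pi k)^2\|\psi\|_{L^2}^2 \geq 0$. For such operators the semigroup decay is controlled from below by the pseudospectral quantity
$$
\Psi(\mathcal{L}_k) := \inf_{\mu \in \R}\ \inf_{\|\psi\|_{L^2}=1} \|(\mathcal{L}_k - i\mu)\psi\|_{L^2},
$$
so the task reduces to proving the uniform lower bound $\Psi(\mathcal{L}_k) \gtrsim \eta^{n_0/(n_0+2)}$ for every $k \neq 0$, which is the heart of the matter.

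Third, and this is the main obstacle, I would establish this resolvent lower bound by a localization argument tuned to the critical points of $f$. Testing $(\mathcal{L}_k - i\mu)\psi = g$ against $\psi$ and separating real and imaginary parts yields the two a priori controls $\eta\|\partial_{x_2}\psi\|_{L^2}^2 \leq \|g\|_{L^2}\|\psi\|_{L^2}$ and $\int_\T (2\pi k f(x_2) - \mu)|\psi|^2 \,\de x_2 = \mathrm{Im}\langle g, \psi\rangle$. The diffusion term penalizes oscillation of $\psi$ in $x_2$, while the multiplier $2\pi k f - \mu$ penalizes mass away from the level set $\{f = \mu/(2\pi k)\}$. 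Away from the critical points $\bar y_i$ one has $|f'| \gtrsim 1$, the level set is crossed transversally, and the classical monotone–shear estimate gives a bound of order $\eta^{1/3}$, which is faster than required. The bottleneck is a neighbourhood of a critical point where, by the order-$n_0$ vanishing hypothesis, $|f(x_2) - f(\bar y_i)| \gtrsim |x_2 - \bar y_i|^{n_0}$ (note $n_0 \geq 2$ at any genuine critical point). Localizing to a window of width $\delta$ about $\bar y_i$, the multiplier contributes at least $\sim |k|\delta^{n_0}$ and the diffusion at least $\sim \eta \delta^{-2}$; balancing these competing scales via $\delta \sim \eta^{1/(n_0+2)}$ for the worst frequency $|k| \sim 1$ produces the common size $\eta^{n_0/(n_0+2)}$, which is exactly the claimed rate. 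Making this rigorous amounts to rescaling $x_2 - \bar y_i = \eta^{1/(n_0+2)} z$, so that $\eta^{-n_0/(n_0+2)}\mathcal{L}_k$ converges to the model operator $-\partial_z^2 + i\, c\, z^{n_0}$, proving an $\eta$- and $\mu$-independent coercivity (spectral gap) bound for this limiting operator, and controlling the commutator errors generated by the smooth partition of unity that separates the critical windows from the transversal region.

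Finally, patching the local estimates with the partition of unity gives $\Psi(\mathcal{L}_k) \gtrsim \eta^{n_0/(n_0+2)}$ uniformly in $k \neq 0$, with the minimum realized at $|k|=1$; feeding this into the Gearhart–Prüss bound and summing over $k$ via Plancherel yields \eqref{def:diff enhanced} with $\lambda(\eta) = C \eta^{n_0/(n_0+2)}$. The delicate points are the uniformity of all constants in both $k$ and $\mu$, and the simultaneous treatment of the finitely many critical points, each carrying its own vanishing order bounded by $n_0$; the slowest one, i.e.\ the highest-order degeneracy, dictates the global rate. An alternative to the Gearhart–Prüss reduction would be a direct hypocoercivity scheme, constructing an augmented energy functional adapted to the same $\eta^{1/(n_0+2)}$ length scale, but the resolvent route is cleaner for capturing the sharp exponent in the presence of degenerate critical points.
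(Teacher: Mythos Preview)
The paper does not provide its own proof of this statement: Theorem~\ref{teo:bcz} is quoted as a known result from \cite{BCZ}, in the formulation of \cite{CZDr, CZG}, and is used as a black box in the construction of the reference solution. There is therefore no proof in the paper to compare against.

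That said, your sketch is a correct outline of a proof, and it is worth noting that it follows a genuinely different route from the original argument in \cite{BCZ}. The approach in \cite{BCZ} is a direct hypocoercivity scheme: one builds an augmented energy functional on each $x_1$-Fourier mode, with carefully tuned weights on $\|\rho_k\|_{L^2}$, $\|\partial_{x_2}\rho_k\|_{L^2}$, and a cross term, and shows a differential inequality that yields the decay rate $\eta^{n_0/(n_0+2)}$. Your proposal instead follows the resolvent route developed in \cite{W}: reduce to a pseudospectral lower bound $\Psi(\mathcal{L}_k)\gtrsim \eta^{n_0/(n_0+2)}$ and feed it into a quantitative Gearhart--Pr\"uss theorem. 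Both approaches hinge on the same scaling heuristic you identify (balancing $\eta\delta^{-2}$ against $|k|\delta^{n_0}$ near a critical point of order $n_0$), and both deliver the sharp exponent; the resolvent route is arguably cleaner for isolating the role of the degenerate critical points, while the hypocoercivity route gives a more explicit time-domain mechanism. Your remark at the end that the hypocoercivity alternative exists is exactly the method of the cited reference.
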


\section{Perturbative analysis}\label{Sec:perturbative}
The goal of this section is twofold: first, in Subsection \ref{Sec:Bifurcation} we provide examples of explicit global-in-time strong solutions of the \eqref{eq:mhd} equations that depend only on the first two variables. Second, in Subsection \ref{Sec:Stability}, we establish the existence of strong solutions emanating from data close to a given one, with a quantitative control on the lifespan in terms of the initial perturbation. Although the stability itself is classical, the quantitative estimate will be essential for the proof of Theorem A: perturbing the solutions constructed in Subsection \ref{Sec:Bifurcation}, we ensure that the resulting solutions are defined long enough for magnetic reconnection to occur.

\subsection{Construction of a reference solution}\label{Sec:Bifurcation}
Let $d\geq 2$ be a positive integer, we say that a point $x_0\in\T^d$ is a {\em zero} or an {\em equilibrium point} of a vector field $v\in C^0(\T^d)$ if $v(x_0)=0$. A zero $x_0$ of a vector field $v\in C^1(\T^d)$ is {\em non-degenerate} if $\nabla v(x_0)$ is an invertible matrix. A non-degenerate zero is {\em hyperbolic} if $\nabla v(x_0)$ has no eigenvalues with zero real part. We recall that a vector field is structurally stable in a neighborhood of a hyperbolic zero. It is easy to show, using the implicit function theorem, that hyperbolic zeros are preserved by $C^1$ perturbations. \\

We now construct special solutions of the three-dimensional \eqref{eq:mhd} equations depending only on the first two variables $x_1, x_2$. We consider solutions of the form
\begin{equation}\label{DefUB}
\tilde u := ( U_1, U_2, 0), 
\qquad \tilde b := (0,0, \tilde b_3),
\end{equation}
where $U : (x_1, x_2) \in \T^2 \to \R^2$ is a stationary solution of the 2D Euler equations, i.e. it satisfies
\begin{equation}\label{TaylorDef}
\dive U  = 0, \qquad (U \cdot \nabla ) U = - \frac12 \nabla (|U|^{2}),
\end{equation}
and the scalar function $\tilde b_3 : (t,x_1, x_2) \in [0, \infty)\times\T^2 \to \R$ is a solution of the advection-diffusion equation
\begin{equation}\label{AdvDiffForB3}
\begin{cases}
\partial_t \tilde b_3   + (U \cdot \nabla ) \tilde b_3 = \eta \Delta\tilde b_3,\\
\tilde b_3(0,\cdot)=\tilde b_3^\mathrm{in},
\end{cases} 
\end{equation}
with $\tilde b_3^\mathrm{in}:\T^2\to\R$ being a given smooth initial datum.
Note that \eqref{AdvDiffForB3} is a linear equation. It is immediate to check that the couple $(\tilde u,\tilde b)$ defined as in \eqref{DefUB} solves \eqref{eq:mhd} with pressure $P = - \frac12  |U|^2$ and initial velocity $\tilde u^\mathrm{in}=U$.\\
\\
We now impose additional conditions on the initial magnetic field to obtain the reconnection result. We will consider the third component $\tilde b^\mathrm{in}_3 : \T^2 \to \R$ of the magnetic field at time $t=0$ to be a smooth function which verifies:
\begin{itemize}
\item the average of $\tilde b^\mathrm{in}_3$ is positive, which means that 
$$ 
\langle \tilde b^\mathrm{in}_3 \rangle:= \int_{\T^2} \tilde b^\mathrm{in}_3(x_1,x_2) \de x_1 \de x_2>0,
$$
\item there exists some $(x_1^*,x_2^*)\in\T^2$ such that $\tilde b^\mathrm{in}_3(x_1^*,x_2^*)=0$. In particular, this implies that $(x_1^*,x_2^*)$ is an equilibrium point for $\tilde b$.  
\end{itemize}
Since the average of $\tilde b^\mathrm{in}_3$ is conserved by the equation \eqref{AdvDiffForB3} we have that $\langle \tilde b^\mathrm{in}_3 \rangle=\langle \tilde b_3(t) \rangle$ for all $t>0$.
The assumptions on $\tilde b^\mathrm{in}_3$ implies that $\tilde b^\mathrm{in}$ has a line of equilibrium points 
$$
\ell:=\{ (x_1^*,x_2^*,x_3)\in \T^2\times \T : x_3 \in \T \}.
$$ 
In particular, this implies that the equilibrium points $(x_1^*,x_2^*,x_3)$ of $\tilde b^\mathrm{in}$ are all degenerate, in the sense that 
$$
\det ( \nabla \tilde b^\mathrm{in}) \big|_{(x_1^*,x_2^*,x_3)} = 0.
$$ 
We will appropriately perturb these solutions to construct stable reconnection scenarios, while quantifying the corresponding reconnection time.

\subsection{Stability estimates}\label{Sec:Stability}
We now consider the following problem: given a smooth solution $(\tilde u,\tilde b,\tilde p)$ of \eqref{eq:mhd} defined on some time interval $[0,T]$ and starting from some initial data $(\tilde u^\mathrm{in},\tilde b^\mathrm{in})$, we want to quantify the time of existence of a solution starting from an $H^r$-initial data $(u^\mathrm{in},b^\mathrm{in})$ in terms of the difference $\|\tilde u^\mathrm{in}-u^\mathrm{in}\|_{H^r}+\|\tilde b^\mathrm{in}-b^\mathrm{in}\|_{H^r}$. The result is the following.
\begin{thm}\label{thm:stab}
Let $\tilde{u},\tilde{b}\in C([0,T^*];H^{r+1}(\T^3))$ with $r > \frac{5}{2}$ be a local strong solution of $\eqref{eq:mhd}$ starting from a divergence-free initial datum~$(\tilde{u}^\mathrm{in},\tilde{b}^\mathrm{in})$.  
Let $u^\mathrm{in},b^\mathrm{in}\in H^r(\T^3)$ be divergence-free vector fields and define the function
\begin{equation}\label{fdjklsdjnfgjhksdjbghjdksbg}
f(t):=\frac{1}{\sqrt{\|\tilde{u}^\mathrm{in}-u^\mathrm{in}\|^2_{H^r}+\|\tilde{b}^\mathrm{in}-b^\mathrm{in}\|^2_{H^r} }} - \frac{C}{2} \int_0^t e^{\frac{C}{2} \int_0^s \|  \tilde{u}(\tau, \cdot) \|_{H^{r+1}} + \|  \tilde{b} (\tau, \cdot) \|_{H^{r+1}}   \, \de \tau  } \, \de s,
\end{equation}
for some constant $C>0$.
Moreover, define $T:=\sup\{t\in\R: f(t)\geq 0\}$, with $T= \infty$ if $f(t)>0$ for all $t >0$. 
If $T^*\geq T$, then there exists a unique strong solution of \eqref{eq:mhd} with initial datum $(u^\mathrm{in},b^\mathrm{in})$ and with life-span~$[0, T)$. Furthermore, this solution verifies the following a-priori estimate 
\begin{equation}
\| \tilde{u} (t,\cdot)  - u (t,\cdot)  \|_{H^r}^2 + \| \tilde{b} (t,\cdot) - b (t,\cdot)   \|_{H^r}^2\leq \frac{e^{C\int_0^t   \|  \tilde{u}(s, \cdot) \|_{H^{r+1}} + \|  \tilde{b} (s, \cdot) \|_{H^{r+1}}  \,\de s}}{f(t)^2},
\end{equation}
for all $t\in[0,T)$.
\end{thm}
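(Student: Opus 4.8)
The plan is to study the system satisfied by the differences $w := u - \tilde u$ and $z := b - \tilde b$, to derive from an $H^r$ energy estimate a closed differential inequality for the quantity $E(t) := \|w(t)\|_{H^r}^2 + \|z(t)\|_{H^r}^2$, and finally to integrate this inequality explicitly so as to recover the function $f$ in \eqref{fdjklsdjnfgjhksdjbghjdksbg}. Writing $q := p - \tilde p$ and subtracting the two copies of \eqref{eq:mhd}, while using the identities $(u\cdot\nabla)u-(\tilde u\cdot\nabla)\tilde u=(u\cdot\nabla)w+(w\cdot\nabla)\tilde u$ and their analogues, the pair $(w,z)$ solves
\begin{align*}
\partial_t w + (u\cdot\nabla) w + (w\cdot\nabla)\tilde u + \nabla q &= (b\cdot\nabla) z + (z\cdot\nabla)\tilde b,\\
\partial_t z + (u\cdot\nabla) z + (w\cdot\nabla)\tilde b &= (b\cdot\nabla) w + (z\cdot\nabla)\tilde u + \eta\Delta z,
\end{align*}
with initial datum $(u^\mathrm{in}-\tilde u^\mathrm{in},\,b^\mathrm{in}-\tilde b^\mathrm{in})$. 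I would first establish the a priori estimate assuming a smooth solution exists, and only afterwards deduce existence through a continuation argument.

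For the energy estimate I apply $\langle D\rangle^r$ to each equation, pair with $\langle D\rangle^r w$ and $\langle D\rangle^r z$ in $L^2$, and sum. The pressure contributes nothing by incompressibility, and the resistive term $\eta\int\langle D\rangle^r\Delta z\cdot\langle D\rangle^r z = -\eta\|\nabla\langle D\rangle^r z\|_{L^2}^2\le 0$ has a favourable sign and is discarded. The transport terms $(u\cdot\nabla)w$ and $(u\cdot\nabla)z$ are split along $u=\tilde u+w$: the $\tilde u$-part is treated after integration by parts, its top-order piece vanishing because $\dive\tilde u=0$ and the remaining commutator being controlled by Lemma \ref{lem:kato ponce}, which produces $\|\tilde u\|_{H^{r+1}}E$; the $w$-part likewise reduces to a commutator bounded by $\|\nabla w\|_{L^\infty}E\lesssim E^{3/2}$, via the embedding $H^{r-1}\hookrightarrow L^\infty$ (which forces $r>5/2$). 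The cross terms $(\tilde b\cdot\nabla)z$ and $(\tilde b\cdot\nabla)w$, paired respectively with $\langle D\rangle^r w$ and $\langle D\rangle^r z$, lose their top-order parts through the cancellation coming from $\dive\tilde b=0$ once the two contributions are summed, leaving commutators of size $\|\tilde b\|_{H^{r+1}}E$. The stretching-by-reference terms $(w\cdot\nabla)\tilde u$, $(z\cdot\nabla)\tilde u$, $(w\cdot\nabla)\tilde b$, $(z\cdot\nabla)\tilde b$ are bounded directly by the fractional Leibniz rule of Lemma \ref{lem:fractional}; here $\langle D\rangle^r$ may fall on $\nabla\tilde u$ or $\nabla\tilde b$, which is exactly why the reference solution must be taken in $H^{r+1}$, and they contribute $(\|\tilde u\|_{H^{r+1}}+\|\tilde b\|_{H^{r+1}})E$. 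The remaining genuinely quadratic interactions $(w\cdot\nabla)w$, $(z\cdot\nabla)z$, $(z\cdot\nabla)w$, $(z\cdot\nabla)z$ are cubic in the unknowns and give $E^{3/2}$. Collecting everything yields, for a suitable $C=C(r)>0$,
\begin{equation*}
\frac{\de}{\de t} E(t) \le C\big(\|\tilde u(t)\|_{H^{r+1}}+\|\tilde b(t)\|_{H^{r+1}}\big) E(t) + C\,E(t)^{3/2}.
\end{equation*}

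Setting $A(t):=\int_0^t(\|\tilde u\|_{H^{r+1}}+\|\tilde b\|_{H^{r+1}})\,\de s$ and $W:=E^{-1/2}$, the inequality rewrites as the linear differential inequality $W'\ge -\tfrac{C}{2}A'(t)\,W - \tfrac{C}{2}$, whose integrating factor is $e^{\frac{C}{2}A(t)}$. Integrating from $0$ and using $A(0)=0$ gives $e^{\frac{C}{2}A(t)}W(t)\ge W(0) - \tfrac{C}{2}\int_0^t e^{\frac{C}{2}A(s)}\,\de s = f(t)$, since $W(0)=E(0)^{-1/2}$ is precisely the first term in \eqref{fdjklsdjnfgjhksdjbghjdksbg}. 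Hence, as long as $f(t)>0$, that is for $t\in[0,T)$, one obtains $E(t)\le e^{CA(t)}/f(t)^2$, which is exactly the claimed a priori bound.

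Existence and uniqueness then follow by combining this bound with the classical local well-posedness of \eqref{eq:mhd} in $H^r$ for $r>5/2$ (where $H^r$ is an algebra and the same energy estimates close to produce a unique maximal solution). Since $\|u\|_{H^r}+\|b\|_{H^r}\le \|\tilde u\|_{H^r}+\|\tilde b\|_{H^r}+\sqrt{E}$ stays finite on $[0,T)$ — here the hypothesis $T^*\ge T$ guarantees that the reference solution, hence $A(t)$, is defined throughout $[0,T)$ — the continuation criterion forbids blow-up before $T$. The main obstacle is the bookkeeping in the energy estimate: one must check that \emph{every} interaction is either linear in $E$ with coefficient $\|\tilde u\|_{H^{r+1}}+\|\tilde b\|_{H^{r+1}}$ or cubic of order $E^{3/2}$, which rests on the two incompressibility cancellations (for the $\tilde u$-transport and for the summed $\tilde b$-cross terms) and on distributing derivatives so that the reference fields are differentiated at most to order $r+1$.
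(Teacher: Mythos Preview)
Your proposal is correct and follows essentially the same route as the paper: write the system for the differences, perform an $H^r$ energy estimate using the Kato--Ponce commutator estimate (Lemma~\ref{lem:kato ponce}) and the fractional Leibniz rule (Lemma~\ref{lem:fractional}), exploit the divergence-free cancellations for the $\tilde u$-transport and the paired $\tilde b$-cross terms, and arrive at $\tfrac{\de}{\de t}E \le C F(t) E + C E^{3/2}$ with $F(t)=\|\tilde u\|_{H^{r+1}}+\|\tilde b\|_{H^{r+1}}$, which integrates to the stated bound. The paper writes the difference system entirely in terms of $(\tilde u,\tilde b,v,m)$ rather than keeping $(u,b)$ in the transport terms, and simply records the integrated ODE bound without the substitution $W=E^{-1/2}$, but these are cosmetic variations; your explicit integration via the integrating factor is a welcome clarification of a step the paper leaves to the reader.
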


\begin{proof}
We denote by $P_{\tilde{u},\tilde{b}}$ and $P_{u,b}$ the pressure function of, respectively, $(\tilde{u}, \tilde{b})$ and $(u, b)$. Our aim is to construct, using a perturbative argument, a solution $(u,b)$ that remains close to $(\tilde{u}, \tilde{b})$ for at least a short period of time. Thus, we set 
\begin{equation}\label{Def:U}
u =  \tilde{u} + v,  \qquad b =  \tilde{b} + m,
\end{equation} 
where $(v, m)$ is the solution of the following system
\begin{equation}\label{eq:mhdPerturbative}
\begin{cases}
\partial_t v + (\tilde{u} \cdot \nabla)v  + (v\cdot \nabla)\tilde{u} + (v\cdot \nabla)v+\nabla P_v=
(\tilde{b} \cdot \nabla)m + ( m \cdot \nabla)\tilde{b} + (m \cdot \nabla)m,\\
\partial_t m + (\tilde{u} \cdot \nabla) m + (v \cdot \nabla) \tilde{b} + (v \cdot \nabla) m   = 
 (\tilde{b} \cdot \nabla) v + (m \cdot \nabla) \tilde{u} + (m \cdot \nabla) v + \eta \Delta{m},\\
\dive v=\dive m=0,\\
v(0,\cdot)=v^\mathrm{in},\hspace{0.3cm} m(0,\cdot)=m^\mathrm{in},
\end{cases}
\end{equation} 
with pressure $P_v:=P_{u,b} - P_{\tilde{u},\tilde{b}}$, and initial data defined as
$$
v^\mathrm{in}:=u^\mathrm{in}-\tilde{u}^\mathrm{in},\qquad m^\mathrm{in}:=b^\mathrm{in}-\tilde{b}^\mathrm{in}.
$$
We define the energies $e_r(t)$ as follows
\begin{equation*}
e_r(t):= \| v (t,\cdot) \|_{H^r}^2 + \| m (t,\cdot) \|_{H^r}^2.
\end{equation*}
A standard PDE argument guarantees that the solution $(u,b)$ exists as long as we are able to control the energies $e_r$. To this purpose, we apply the operator $\langle D \rangle^r$ to the equations obtaining that
\begin{align}
\partial_t \langle D \rangle^{r} v  &+  \nabla \langle D \rangle^{r} P_v + 
\langle D \rangle^{r} \big( ( v\cdot\nabla)v \big) + \langle D \rangle^{r} \big( ( \tilde{u}\cdot\nabla) v \big)
+\langle D \rangle^{r} \big( ( v\cdot\nabla) \tilde{u} \big)\nonumber\\
&= \langle D \rangle^{r} \big(( m \cdot \nabla) m \big)  + \langle D \rangle^{r} \big( ( \tilde{b}\cdot\nabla)m \big) + \langle D \rangle^{r} \big( ( m\cdot\nabla)\tilde{b}\big),
\end{align}
while for the magnetic-field equation
\begin{align}
\partial_t \langle D \rangle^{r} m + \eta \langle D \rangle^{r+2} m & +  \langle D \rangle^{r} \big ((v\cdot\nabla)m \big) +
\langle D \rangle^{r} \big ( ( \tilde{u}\cdot\nabla)m \big) + \langle D \rangle^{r} \big ( ( v\cdot\nabla)\tilde{b} \big)\nonumber\\
& = \langle D \rangle^{r} \big ( ( m\cdot\nabla)v \big) + \langle D \rangle^{r} \big ( ( m\cdot\nabla) \tilde{u} \big) 
+ \langle D \rangle^{r} \big ( (\tilde{b}\cdot\nabla) v \big).
\end{align}

We take the scalar product of these equations against $\langle D \rangle^{r} v$ and $\langle D \rangle^{r} m$ , respectively, sum them, and integrate over space. By using the divergence-free condition and applying integration by parts, we obtain
\begin{align}\label{fmkdlknfdhsjhbfhdjskhdfgn}
\frac12 & \frac{\de}{\de t} e_r(t) + \eta \int_{\T^3} | \langle D \rangle^{r + 1} m|^2 \, \de x = 
- \int_{\T^3} \langle D \rangle^{r} v \cdot \big[ \langle D \rangle^{r}, v \cdot \nabla \big]  v
- \int_{\T^3} \langle D \rangle^{r} v \cdot \big[ \langle D \rangle^{r}, \tilde{u} \cdot \nabla \big]  v
\\ \nonumber
&
- \int_{\T^3} \langle D \rangle^{r} v \cdot   \langle D \rangle^{r} \big( ( v\cdot\nabla) \tilde{u} \big)
+  \int_{\T^3} \langle D \rangle^{r} v \cdot \big[ \langle D \rangle^{r}, m \cdot \nabla \big]  m
+  \int_{\T^3} \langle D \rangle^{r} v \cdot \big[ \langle D \rangle^{r}, \tilde{b} \cdot \nabla \big]  m
\\ \nonumber
&
+ \int_{\T^3} \langle D \rangle^{r} v \cdot   \langle D \rangle^{r} \big( ( m\cdot\nabla) \tilde{b} \big)
-  \int_{\T^3} \langle D \rangle^{r} m \cdot \big[ \langle D \rangle^{r}, v \cdot \nabla \big]  m
-  \int_{\T^3} \langle D \rangle^{r} m \cdot \big[ \langle D \rangle^{r}, \tilde{u} \cdot \nabla \big]  m
\\ \nonumber
&
- \int_{\T^3} \langle D \rangle^{r} m \cdot   \langle D \rangle^{r} \big( ( v\cdot\nabla) \tilde{b} \big)
+  \int_{\T^3} \langle D \rangle^{r} m \cdot   \langle D \rangle^{r} \big( ( m \cdot\nabla) \tilde{u} \big).
\end{align}
We use the Kato-Ponce commutator estimate in Lemma \ref{lem:kato ponce}, the fractional Leibniz rule in Lemma \ref{lem:fractional} and the Sobolev embedding $H^{r}(\T^3) \hookrightarrow L^\infty(\T^3)$ for $r > \frac32$, to get the bounds
\begin{itemize}
\item $\| \big[ \langle D \rangle^{r}, v \cdot \nabla \big]  v \|_{L^2} \lesssim  \| v \|_{H^{r}} \| \nabla v \|_{L^{\infty}}\lesssim \| v \|_{H^{r}}^2$,

\item $\| \big[ \langle D \rangle^{r}, \tilde{u} \cdot \nabla \big]  v \|_{L^2} 
\lesssim  \| \tilde{u} \|_{H^r} \| \nabla v \|_{L^{\infty}} + \| \nabla \tilde u\|_{L^{\infty}} \| v \|_{H^r} \lesssim \|  \tilde{u} \|_{H^r} \| v \|_{H^r}$,

\item $\| \langle D \rangle^{r} \big( ( v\cdot\nabla) \tilde{u} \big) \|_{L^2} \lesssim \| v \|_{H^r} \| \nabla \tilde{u} \|_{L^{\infty}} + \|  v \|_{L^{\infty}} \| \nabla \tilde{u} \|_{H^r} \lesssim \|  \tilde{u} \|_{H^{r+1}} \| v \|_{H^r}$,

\item $\| \big[ \langle D \rangle^{r}, m \cdot \nabla \big]  m \|_{L^2} \lesssim  \| m \|_{H^{r}} \| \nabla m \|_{L^{\infty}}\lesssim \| m \|_{H^{r}}^2$,

\item $\| \big[ \langle D \rangle^{r}, \tilde{b} \cdot \nabla \big]  m \|_{L^2} \lesssim  \| \tilde{b} \|_{H^r} \| \nabla m \|_{L^{\infty}} + \| \nabla \tilde{b} \|_{L^{\infty}} \| m \|_{H^r} \lesssim \|  \tilde{b} \|_{H^r} \| m \|_{H^r}$,

\item $\| \langle D \rangle^{r} \big( ( m\cdot\nabla) \tilde{b} \big)\|_{L^2} \lesssim  \| m \|_{H^r} \| \nabla \tilde{b} \|_{L^{\infty}} + \|  m \|_{L^{\infty}} \| \nabla \tilde{b} \|_{H^r} \lesssim \|  \tilde{b} \|_{H^{r+1}} \| m \|_{H^r}$,

\item $\| \big[ \langle D \rangle^{r}, v \cdot \nabla \big]  m\|_{L^2} \lesssim  \| v \|_{H^r} \| \nabla m \|_{L^{\infty}} + \| \nabla v \|_{L^{\infty}} \| m \|_{H^r} \lesssim \|  v \|_{H^r} \| m \|_{H^r}$,

\item $\| \big[ \langle D \rangle^{r}, \tilde{u} \cdot \nabla \big]  m \|_{L^2} \lesssim  \| \tilde{u} \|_{H^r} \| \nabla m \|_{L^{\infty}} + \| \nabla \tilde{u} \|_{L^{\infty}} \| m \|_{H^r} \lesssim \|  \tilde{u} \|_{H^r} \| m \|_{H^r}$,

\item $\| \langle D \rangle^{r} \big( ( v\cdot\nabla) \tilde{b} \big)\|_{L^2} \lesssim  \| v \|_{H^r} \| \nabla \tilde{b} \|_{L^{\infty}} + \|  v \|_{L^{\infty}} \| \nabla \tilde{b} \|_{H^r} \lesssim \|  \tilde{b} \|_{H^{r+1}} \| v \|_{H^r}$,

\item $\| \langle D \rangle^{r} \big( ( m\cdot\nabla) \tilde{u} \big)\|_{L^2} \lesssim  \| m \|_{H^r} \| \nabla \tilde{u} \|_{L^{\infty}} + \|  m \|_{L^{\infty}} \| \nabla \tilde{u} \|_{H^r} \lesssim \|  \tilde{u} \|_{H^{r+1}} \| m \|_{H^r}$.
\end{itemize}
Plugging these estimates in \eqref{fmkdlknfdhsjhbfhdjskhdfgn}, using H\"older and Young inequalities we arrive to
$$
\frac{\de}{\de t} e_r(t) \leq C  (e_r(t))^{3/2} +  C F(t) e_r(t), \qquad 
F(t) := \|  \tilde{u} (t,\cdot)\|_{H^{r+1}} + \|  \tilde{b} (t,\cdot) \|_{H^{r+1}} .
$$
It follows that the estimate
$$
e_r(t) \leq 
\frac{e^{C\int_0^t F(s)\,\de s}}{\left( \frac{1}{\sqrt{e_r(0)}}  - \frac{C}{2} \int_0^t e^{\frac{C}{2} \int_0^s F(\tau) \, \de \tau  } \, \de s \right)^{2}},
$$
holds provided that the denominator is positive. Since the solution exists as long as the $H^r$ norm is bounded, the previous estimate completes the proof.
\end{proof}

\begin{rem}\label{rem:stabilità viscoso}
The stability result in Theorem \ref{thm:stab} also extends to the viscous \eqref{eq:mhd} system. Indeed, if viscosity is included in the equations, one simply needs to add the Laplacian term $\Delta v$ in the first equation of system \eqref{eq:mhdPerturbative}. This additional term can be estimated in the same way as the Laplacian term $\Delta m$, and thus the proof carries over without further modifications.
\end{rem}

\section{Accelerated reconnection scenario}\label{Section:Proof}
In this section, we present the proof of our first main result. We recall once again that our goal is to build examples of solutions of \eqref{eq:mhd} that show accelerated reconnection of magnetic lines. Moreover, we aim for this scenario to be stable with respect to small perturbations of the initial data and target times. To achieve this, our starting point will be the example from Section \ref{Sec:Bifurcation}. First of all, we introduce the following definition.

\begin{defn}\label{def:diss enhBis}
Let $\eta_0\in(0,1)$. An adapted function is a continuous increasing function $\Lambda: (0,\eta_0)\to(0,+\infty)$
such that 
\begin{equation}\label{lambdaBig}
\lim_{\eta\to 0^+} \Lambda(\eta) < \infty.
\end{equation}
\end{defn}

The examples of adapted functions that are relevant for us are $\Lambda(\eta) = C$, $\Lambda(\eta) = C \eta$ or $\Lambda(\eta) = \lambda(\eta)$, where $\lambda(\eta)$ is an enhanced dissipation rate as in Definition \ref{def:diss enh}.

Hereafter, for $F: [0,\delta] \times \T^d \to \R^d$, $\delta >0$, we will sometimes abbreviate 
\begin{equation}
\| F \|_{L^\infty_\delta X}  := \|F\|_{L^\infty((0,\delta); X )}  .
\end{equation}
where $X$ is a Sobolev or a Besov space. We can now prove the following.

\begin{mainthm3}\label{lem:enh Hr}
Let $U\in L^\infty((0,T); W^{S,\infty}(\T^d))$, for some integer $S\geq 1$ and $d\geq 2$, be a divergence-free vector field, and let $\rho$ be the solution of \eqref{eq:ad} corresponding to an initial datum $\rho^\mathrm{in}\in~H^r(\T^d)$, with $r$ being a positive integer, such that
\begin{equation}\label{passo 0}
\|\rho(t,\cdot)\|_{L^2}\lesssim e^{-\Lambda(\eta)t}\|\rho^\mathrm{in}\|_{L^2},
\end{equation}
where $\Lambda(\eta)$ is an adapted function as in Definition \ref{def:diss enh}. 
Then, there exists $\eta_0\in(0,1)$ such that the following holds: for all 
$\eta \in (0, \eta_0)$, for any $0\leq s\leq \min\{r,S\}$ and for all $\varepsilon >0$, there exists a constant $C_{\e,s}>0$ such that
\begin{equation}\label{eq:enhanced hr}
\|\rho(t,\cdot)\|_{H^s}\leq C_{\e,s} \frac{ 1+  \|U\|_{L^\infty_t W^{S,\infty}}^s}{\eta^{s\left(\frac12 + \varepsilon\right)}} e^{-\Lambda(\eta)t}\|\rho^\mathrm{in}\|_{H^s}, \qquad 0\leq t \leq T.
\end{equation}
The $\varepsilon$-loss can be removed if $U \in L^\infty((0,T);B^{S}_{\infty,2}(\T^d))$. In this case one has that there exists $C_s>0$ such that
\begin{equation}\label{eq:enhanced hrBesov}
\|\rho(t,\cdot)\|_{H^s} \leq C_{s} \frac{ 1+  \|U\|_{ L^\infty_t B^{S}_{\infty,2} }^s}{\eta^{s/2 }} e^{- \Lambda(\eta)t}\|\rho^\mathrm{in}\|_{H^s}, \qquad 0\leq t \leq T.
\end{equation}
\end{mainthm3}

We stress the fact that Theorem B' works for the case of non-autonomous vector fields, and thus in the context of exponential mixers. On the other hand, in the autonomous case the condition \eqref{passo 0} holds if $U$ is dissipation enhancing with rate $\lambda(\eta)$, taking $\Lambda(\eta)=\lambda(\eta)$, and the initial datum has zero streamline-average. Thus, Theorem B follows from Theorem B'.

\begin{rem}
Note that if $\Lambda(\eta) = \eta^{\alpha}$ with $\alpha \geq 1/2$, the inequality
\eqref{eq:enhanced hrBesov} implies (and in fact improves on for $\alpha > 1/2$) the following estimate
\begin{equation}\label{eq:enhanced hrCZ}
\|\rho(t,\cdot)\|_{H^s}\lesssim \frac{ 1+\|U\|_{L^\infty_{t} B^{S}_{\infty,2} }^s }{\Lambda(\eta)^{s}} e^{- \Lambda(\eta)t}\|\rho^\mathrm{in}\|_{H^s}, \qquad 0\leq t \leq T.
\end{equation}
\end{rem}

\begin{proof}
First of all, we prove the estimate \eqref{eq:enhanced hr} for integers values of $s$ by induction. The induction base $s=0$ follows trivially from the assumption \eqref{passo 0}. Thus, we consider $s\geq 1$ and we assume that \eqref{eq:enhanced hr} holds with $s-1$. We denote with $P_N$ the Littlewood-Paley projector on the dyadic scale $[N, 2N]$ with $N \in 2^{\mathbb{N}}$, and we define
\begin{equation}\label{defBetafdjsk}
\beta(\eta) := \min \left( \eta^{\frac12 + \varepsilon},  2 \left( \frac{\eta}{\Lambda(\eta)} \right)^{\frac12} \right).
\end{equation}

Recalling \eqref{lambdaBig}, we see that for all $\eta$ sufficiently small we have in fact $\beta(\eta) = \eta^{\frac12 + \varepsilon}$.
Thus we can rewrite \eqref{eq:enhanced hr} as
\begin{equation}\label{eq:enhanced hrBis}
\|\rho(t,\cdot)\|_{H^s}\lesssim_{\varepsilon, s} \frac{ 1+C_{\varepsilon, s} \|U\|_{L^\infty_t W^{S,\infty}}^s} { \beta(\eta)^s }  e^{- \Lambda(\eta)t}\|\rho^\mathrm{in}\|_{H^s}.
\end{equation}
By using \eqref{definizione besov}, we can estimate the $H^s$ norm of $\rho$ as 
\begin{align}
\| \rho(t,\cdot) \|_{H^s} &\lesssim \frac{1}{\beta(\eta)^{s}}   \| \rho(t,\cdot) \|_{L^2}
+ \left( \sum_{N > \frac{1}{\beta(\eta)}}N^{2s} \| P_N \rho(t,\cdot) \|^2_{L^2} \right)^{1/2}\nonumber\\
&\leq \frac{1}{\beta(\eta)^{s}}e^{-\Lambda(\eta)t}\|\rho^\mathrm{in}\|_{L^2}+ \left( \sum_{N > \frac{1}{\beta(\eta)}}N^{2s} \| P_N \rho(t,\cdot) \|^2_{L^2} \right)^{1/2}
\label{splitting}
\end{align}
where the bound for the low frequency contribution follows by \eqref{passo 0}.\\

Let $c\ll 1$ be a suitable small constant (to be fixed later). We split the velocity field $U$ as
\begin{equation}
U:=U_1+U_2,
\end{equation}
where 
\begin{equation}\label{proiettori smooth}
U_1(t,\cdot):= P_{\leq cN} U(t,\cdot),\qquad 
U_2(t,\cdot):= P_{> cN} U (t, \cdot).
\end{equation}
Note that $U_1,U_2$ depends on $N$, although we do not keep track of this dependence in the notation. 
To estimate the high frequencies we first apply $P_N$ to \eqref{eq:ad} and we get that
\begin{equation}\label{TransportForRhoProjected}
 \partial_t P_N \rho  +  (U_1 \cdot \nabla ) P_N \rho  - \eta \Delta P_N \rho = -P_N\left((U_2 \cdot \nabla)  \rho \right)+
    [ U_1 \cdot \nabla , P_N ] \rho .
\end{equation}
Taking the scalar product of \eqref{TransportForRhoProjected} 
against $N^{2s} P_N \rho$, integrating by parts, and exploiting the divergence-free condition on $U_1$ we arrive to
\begin{align*}
 \frac12\frac{\de}{\de t}  N^{2s}  \| P_N \rho (t,\cdot) \|_{L^2}^2 
+ \eta N^{2s}  \| \nabla P_{N} \rho(t,\cdot) \|^2_{L^2}&= 
 N^{2s}  \int_{\T^d} [ U_1 \cdot \nabla , P_N ] \rho(t,x)   P_{N}\rho(t,x) \de x\\
&-N^{2s}\int_{\T^d}P_N\left((U_2 \cdot \nabla ) \rho(t,x) \right)P_N \rho(t,x) \de x.
\end{align*}
We start by considering the term involving $U_1$: we define the operator $\tilde P_N$ as
$$
\tilde P_N f:=\sum_{\frac{N}{C}\leq L\leq CN}P_L f,
$$
where $C>0$ is some positive constant that has to be chosen big enough in order to have that
$$
[(U_1\cdot\nabla), P_N]\rho=[(U_1\cdot\nabla), P_N]\tilde P_N\rho.
$$
This can be done using $P_N\tilde P_N=P_N$ and the following Fourier support consideration:
\begin{align*}
\supp \mathcal{F}\left((U_1\cdot\nabla)(I-\tilde{P}_N)\rho\right)&\subset \supp \mathcal{F}(U_1)+\supp \mathcal{F}((I-\tilde{P}_N)\rho)\\
&\subset \left(B_{cN}+B_{\frac{N}{C}}\right)\cup\left((\R^d\setminus B_{CN})+B_{cN}\right),
\end{align*}
so it is enough to take $c\ll 1$ and $C\gg 1$ in order to make this set disjoint from the support of $P_N$.
Thus, we use the commutator estimate in Lemma \ref{lem:commutatore} and Young inequality to obtain
\begin{align*}
N^{2s}  \int_{\T^d} [ U_1 \cdot \nabla , P_N ] \rho(t,x)   P_{N}\rho(t,x) \de x&=N^{2s}  \int_{\T^d} [ U_1 \cdot \nabla , P_N ] \tilde P_N\rho(t,x)   P_{N}\rho(t,x) \de x\\
&\leq N^{2s} \|\nabla U_1(t,\cdot)\|_{L^\infty}\|\tilde P_N\rho(t,\cdot)\|_{L^2}\|P_N\rho(t,\cdot)\|_{L^2}\\
&\leq \frac{C}{\eta}N^{2(s-1)} \|\nabla U_1(t,\cdot)\|_{L^\infty}^2\|\tilde P_N\rho(t,\cdot)\|_{L^2}^2+\frac{\eta}{4} N^{2(s+1)}\|P_N\rho(t,\cdot)\|_{L^2}^2.
\end{align*} 

We now consider the term involving $U_2$: by H\"older and Young inequality we get that
\begin{align*}
N^{2s}\int_{\T^d}P_N\left(U_2 \cdot \nabla \rho  (t,x)\right)P_N \rho (t,x) \,\de x
&=-N^{2s}\int_{\T^d}P_N\left(U_2 \rho (t,x) \right)\cdot \nabla P_N \rho (t,x) \,\de x\\
&\leq N^{2s}\|P_N\left(U_2\rho (t,x) \right)\|_{L^2}\|\nabla P_N\rho (t,x)\|_{L^2}\\
&\leq \frac{\eta}{4} N^{2(s+1)}\|P_N\rho(t,\cdot)\|_{L^2}^2+\frac{C}{\eta}N^{2s}\|P_N\left(U_2\rho (t,\cdot)\right)\|_{L^2}^2\\
&\leq \frac{\eta}{4} N^{2(s+1)}\|P_N\rho(t,\cdot)\|_{L^2}^2+\frac{C}{\eta}N^{2s}\|U_2(t,\cdot)\|_{L^\infty}^2\|\rho(t,\cdot)\|_{L^2}^2.
\end{align*}
Then, we use the diffusion to absorb on the left hand side the terms with powers $N^{2(s+1)}$, obtaining
\begin{align}\label{Plugintosajsfdaskdjnfg}
\frac{\de}{\de t}  N^{2s}  \| P_N \rho (t,\cdot) \|_{L^2}^2 +\eta N^{2(s+1)}\| P_N \rho (t,\cdot) \|_{L^2}^2&\leq \frac{C}{\eta}N^{2(s-1)} \|\nabla U_1\|_{L^\infty}^2\|\tilde P_N\rho(t,\cdot)\|_{L^2}^2\\ \nonumber
&+\frac{C}{\eta}N^{2s}\|U_2(t,\cdot)\|_{L^\infty}^2\|\rho(t,\cdot)\|_{L^2}^2.
\end{align}
Using the following estimate 
\begin{align*}
N^{2(s-1)}\|\tilde P_N\rho(t,\cdot)\|_{L^2}^2&=N^{2(s-1)}\sum_{\frac{N}{C}\leq M\leq CN}\|P_M\rho(t,\cdot)\|_{L^2}^2\\
&\leq C^{2(s-1)}\sum_{\frac{N}{C}\leq M\leq CN}M^{2(s-1)}\|P_M\rho(t,\cdot)\|_{L^2}^2 \lesssim\|\rho(t,\cdot)\|_{H^{s-1}}^2,
\end{align*}
from the induction hypothesis we deduce 
\begin{equation}\label{est:bern-u2bdhsjhb}
N^{s-1} \|\tilde P_N\rho(t,\cdot)\|_{L^2} \lesssim 
\frac{1 + \|U\|_{L^\infty_t W^{S,\infty}}^{s-1}}{\beta(\eta)^{s-1}}e^{- 2 \Lambda(\eta) t}\|\rho^\mathrm{in}\|_{H^{s-1}}.
\end{equation}
On the other hand, by the definition of $U_2$ and the Bernstein inequality \eqref{eq:bern1} we have
\begin{equation}\label{est:bern-u2}
\| U_2(t,\cdot)\|_{L^\infty}\leq N^{-S}\|U_2(t,\cdot)\|_{W^{S,\infty}}, \qquad \mbox{for a.e. }t\in[0,T]. 
\end{equation}
Plugging the estimates \eqref{est:bern-u2bdhsjhb}, \eqref{est:bern-u2} into \eqref{Plugintosajsfdaskdjnfg}
and using the enhanced dissipation bound \eqref{passo 0} we arrive to 
\begin{align*}
&\frac{\de}{\de t}  N^{2s}  \| P_N \rho (t,\cdot) \|_{L^2}^2
\\
&
\leq -\eta N^2 N^{2s}\| P_N \rho (t,\cdot) \|_{L^2}^2 +
\frac{C}{\eta}\|\nabla U_1(t,\cdot) \|_{L^\infty}^2 (1+\|U\|_{L^\infty_t W^{S,\infty}}^{2(s-1)})\frac{C}{\beta(\eta)^{2(s-1)}}e^{-2\Lambda(\eta)t}\|\rho^\mathrm{in}\|_{H^{s-1}}^2,
\\
& +\frac{C}{\eta}N^{2(s-S)}\|U_2 \|_{L^\infty_t W^{S,\infty}}^2e^{- 2 \Lambda(\eta) t}\|\rho^\mathrm{in}\|_{L^2}^2.
\end{align*} 
By defining $h_s^N(t):=N^{2s}  \| P_N \rho (t,\cdot) \|_{L^2}^2$, we apply Gr\"onwall inequality and the crude estimate
$$
\|\nabla U_1\|_{L^\infty_t L^{\infty}}^2(1+\|U\|_{L^\infty_t W^{S,\infty}}^{2(s-1)}) \lesssim
1+\|U\|_{L^\infty_t W^{S,\infty}}^{2s},
$$ 
to get
\begin{align*}
 h_s^N(t)& 
\leq 
\frac{C}{\eta \beta(\eta)^{2(s-1)}} (1+\|U\|_{L^\infty_t W^{S,\infty}}^{2s}) \|\rho^\mathrm{in}\|_{H^{s-1}}^2e^{-  \eta N^{2} t}\int_0^t e^{ \eta N^{2} \tau}e^{- 2 C_1 \Lambda(\eta) \tau}\de \tau
\\
&
+
e^{-  \eta N^{2} t} h_s^N(0) +\frac{C}{\eta}N^{2(s-S)}\|U_2\|_{L^\infty_t W^{S,\infty}}^2\|\rho^\mathrm{in}\|_{L^2}^2 e^{- \eta N^{2} t} \int_0^t e^{ \eta N^{2} \tau}e^{- 2 \Lambda(\eta) \tau}\de \tau.
\end{align*}
Recalling \eqref{defBetafdjsk} and the fact that we restrict to $N \geq \frac{1}{\beta(\eta)}$, we have
\begin{equation}\label{fine}
\eta N^2\geq  \frac{\eta}{\beta(\eta)^2}  \geq 4\Lambda(\eta),
\end{equation}
and as a consequence
$$
\int_0^t e^{ \eta N^{2} \tau}e^{- 2 \Lambda(\eta) \tau}\de \tau=\frac{e^{ \eta N^{2}t-2 \Lambda(\eta)t}-1}{\eta N^{2}-2 \Lambda(\eta)}\leq \frac{2}{\eta N^2}e^{ \eta N^{2}t-2 \Lambda(\eta)t}.
$$
This latter implies that (here $\varepsilon' > 0$)
\begin{align}
\nonumber & h_s^N(t) \leq e^{-\eta N^{2} t} h_s^N(0)
\\ \nonumber
&+\left(\frac{C(1 + \|U\|_{L^\infty_t W^{S,\infty}}^{2s})}{\eta^{2}N^2\beta(\eta)^{2(s-1)} } + 
\frac{C}{\eta^{2}}N^{2(s-S-1)}\|U_2\|_{L^\infty_t W^{S,\infty}}^2\right)\|\rho^\mathrm{in}\|_{H^{s-1}}^2
e^{- 2 \Lambda(\eta) t} \\ \nonumber
& \leq e^{-\eta N^{2} t} h_s^N(0)
\\
&  
+\left(\frac{C}{\eta^{2} N^{\varepsilon'} \beta(\eta)^{2s - 4 + \varepsilon'}} + \frac{C}{\eta^{2} \beta(\eta)^{-2 + 2 \varepsilon'}}N^{2(s-S-\varepsilon')}\right)(1 + \|U\|_{L^\infty_t W^{S,\infty}}^{2s})\|\rho^\mathrm{in}\|_{H^{s-1}}^2e^{- 2 \Lambda(\eta) t},\label{norme alte}
\end{align}
where in the last line we used $N \geq \frac{1}{\beta(\eta)}$, $s \geq 1$ and we exploited the fact that $U_1,U_2$ are defined in \eqref{proiettori smooth} via smooth projectors.

Recalling that $\beta=\eta^{\frac12 + \varepsilon}$ and that we restrict to $s \geq 1$, we see that for all 
$\varepsilon' = \varepsilon'(\varepsilon) >0$ and 
$\eta >0$ sufficiently small we have
$$
\frac{1}{\eta^{2}  \beta(\eta)^{2s - 4 + \varepsilon'}} \leq \frac{1}{ \beta(\eta)^{2s}}, \qquad 
\frac{1}{\eta^{2} \beta(\eta)^{-2 + 2 \varepsilon'}} \leq \frac{1}{ \beta(\eta)^{2s}},
$$
so that we have arrived to
\begin{align*}
 h_s^N(t) \leq e^{-\eta N^{2} t} h_s^N(0)+ \frac{C_{s}}{N^{\varepsilon' }\beta(\eta)^{2s}} 
 (1 + \|U\|_{L^\infty_t W^{S,\infty}}^{2s})\|\rho^\mathrm{in}\|_{H^{s-1}}^2e^{- 2 \Lambda(\eta) t}.
\end{align*}
Lastly, using again \eqref{fine}, summing over $N>\frac{1}{\beta(\eta)}$, and recalling \eqref{defBetafdjsk} we get that 
\begin{equation}
\sum_{N > \frac{1}{\beta(\eta)}}N^{2s} \| P_N \rho(t,\cdot) \|^2_{L^2} 
\leq
\frac{C_{s, \varepsilon}}{\beta(\eta)^{2s}}
(1 + \|U\|_{L^\infty_t W^{S,\infty}}^{2s})\|\rho^\mathrm{in}\|_{H^{s-1}}^2 e^{- 2 \Lambda(\eta) t},
\end{equation}
as long as $s \leq S$. Plugging this into \eqref{splitting} the inequality \eqref{eq:enhanced hr} follows. Finally, the result for non-integer values of $s$ follows by applying the Riesz-Thorin interpolation theorem to the linear bounded operator
$$
T:\rho^\mathrm{in}\in H^s(\T^d)\mapsto \rho(t,\cdot)\in H^s(\T^d),
$$
where $\rho(t,\cdot)$ is the unique solution of \eqref{eq:ad} with initial datum $\rho^\mathrm{in}$.\\
\\
In order to prove the estimate \eqref{eq:enhanced hrBesov}, in which the $\varepsilon$-loss is removed, we only need to modify the part of the argument which involves $U_1$. Since the statement is trivial for $s=0$, we can assume that $s$ is an integer in $[1, \min(r,S)]$. 
Using Lemma 2.100 in \cite{BCD} it follows that
\begin{equation}
\|[U_1\cdot \nabla, P_N]\tilde P_N\rho(t,\cdot)\|_{L^2}\leq N^{-s} c_N \underbrace{\left(\|\nabla U_1(t,\cdot)\|_{L^\infty}\|\tilde P_N\rho(t,\cdot)\|_{H^s}+\|\nabla\tilde P_N\rho(t,\cdot)\|_{L^2}\|\nabla U_1(t,\cdot)\|_{B^{s-1}_{\infty, 2}}\right)}_{:=K_{N}[U_1,\rho](t)}, 
\end{equation}
where 
\begin{equation}\label{cNL2}
\sum_{N \in 2^{\mathbb{N}}} c_N^2 \lesssim 1.
\end{equation}
So, we have that
\begin{align*}
N^{2s}  \int_{\T^d} [ U_1 \cdot \nabla , P_N ] \rho(t,x)   P_{N}\rho(t,x) \de x&=N^{2s}  \int_{\T^d} [ U_1 \cdot \nabla , P_N ] \tilde P_N\rho(t,x)   P_{N}\rho(t,x) \de x\\
&\leq N^{2s} N^{-s} c_N K_{N}[U_1,\rho](t)\|P_N\rho(t,\cdot)\|_{L^2}\\
&\leq \frac{C}{\eta}N^{-2} c_N^2 K_{N}[U_1,\rho](t)^2+\frac{\eta}{4} N^{2(s+1)}\|P_N\rho(t,\cdot)\|_{L^2}^2.
\end{align*}
Since $s \geq 1$, we use that
$$
\|\nabla\tilde P_N\rho(t,\cdot)\|_{L^2}\leq \|\tilde P_N\rho(t,\cdot)\|_{H^s},
$$
$$
\|\nabla U_1(t,\cdot)\|_{B^{s-1}_{\infty, 2}} \lesssim \| U_1(t,\cdot) \|_{B^{s}_{\infty, 2}} \lesssim \| U(t,\cdot)\|_{B^{S}_{\infty, 2}},
$$
to deduce
\begin{equation}\label{combfin1}
|K_{N}[U_1,\rho](t)| \lesssim  \|\tilde P_N\rho(t,\cdot)\|_{H^s} \| U(t,\cdot)\|_{B^{S}_{\infty, 2}} .
\end{equation}
Then, as before, we note
\begin{align*}
N^{-2} \|\tilde P_N\rho(t,\cdot)\|_{H^s}^2&=N^{-2}\sum_{\frac{N}{C}\leq M\leq CN}M^{2s}\|P_M\rho(t,\cdot)\|_{L^2}^2\\
&\leq C^{2}\sum_{\frac{N}{C}\leq M\leq CN}M^{2(s-1)}\|P_M\rho(t,\cdot)\|_{L^2}^2 \lesssim\|\rho(t,\cdot)\|_{H^{s-1}}^2,
\end{align*}
and we use the (new) induction hypothesis to obtain 
\begin{equation}\label{combfin2}
N^{-1}\|\tilde P_N\rho(t,\cdot)\|_{H^s}\lesssim 
\frac{1 + \|U\|_{L^\infty_t B^{S}_{\infty, 2}}^{s-1}}{\beta(\eta)^{s-1}}e^{- 2 \Lambda(\eta) t}\|\rho^\mathrm{in}\|_{H^{s-1}},
\end{equation}
Combining \eqref{combfin1}-\eqref{combfin2} we get that  
$$
N^{-1} |K_{N}[U_1,\rho](t)| \lesssim  
\frac{1 + \|U\|_{L^\infty_t B^{S}_{\infty, 2}}^{s}}{\beta(\eta)^{s-1}}e^{- 2 \Lambda(\eta) t}\|\rho^\mathrm{in}\|_{H^{s-1}},
$$ 
and then we proceed exactly as in the proof of \eqref{eq:enhanced hr}, but we use \eqref{cNL2} to sum over $N$, instead of the factor $N^{-\varepsilon'}$.
\end{proof}

\begin{rem}
In the case of a shear flow, one can obtain a similar estimate with a different proof. Indeed, by exploiting the fact that $\partial_x \rho$ solves the equation \eqref{eq:ad}, it is straightforward to prove that $\partial_x^s \rho$ are dissipated with the same rate of $\rho$ (no $\eta$-corrections in the multiplicative constant). On the other hand, differentiating the equation with respect to y produces an additional term. This can be handled using the Duhamel formula and the enhanced dissipation property of the semigroup generated by $\partial_t+U\cdot\nabla-\eta\Delta$. Then, one has to iterate the procedure for higher derivatives, obtaining a multiplicative constant that grows like $(1+t)^s$, which is better than \eqref{eq:enhanced hrBesov-intro} for $t\lesssim \eta^{-1/2}$. We thank Michele Dolce and Michele Coti Zelati for pointing this out.
\end{rem}

We can finally prove our main result, we recall the precise statement below.
\begin{mainthm}
Let $r > 5/2$. There exists $\eta_0 >0$ sufficiently small such that there exists a family of divergence-free vector fields $(u^\mathrm{in}_\eta,b^\mathrm{in}_\eta)$ in $H^r(\T^3)$ with $0<\eta<\eta_0$, such that the unique local solution $(u_{\eta},b_{\eta},p_{\eta})$ of \eqref{eq:mhd} arising from $(u^\mathrm{in}_\eta,b^\mathrm{in}_\eta)$ shows accelerated magnetic reconnection with rate
$$
\lambda_\mathrm{rec}(\eta) := c_2 \frac{\eta^\frac12}{|\ln \eta|},
$$
for some positive constant $c_2>0$. More precisely, the magnetic field $b_{\eta}(t,\cdot)$ has an equilibrium point for times 
$t \in [0, c_1]$ and has no equilibrium points for times $t \in \left[c_2 \frac{|\ln \eta|}{\eta^\frac12}, 2 c_2 \frac{|\ln \eta|}{\eta^\frac12}\right]$, where the constants $c_1, c_2$ only depends on $r$. 
The statement is structurally stable, in the sense that it still holds 
for all solutions with (divergence-free) initial data $(\tilde u^\mathrm{in}_\eta, \tilde b^\mathrm{in}_\eta)$
such that 
\begin{equation}\label{eqPertIntro2}
\| \tilde u^\mathrm{in}_\eta - u^\mathrm{in}_\eta\|_{H^r} 
 + \|\tilde b^\mathrm{in}_\eta -  b^\mathrm{in}_\eta \|_{H^r} \leq c(r, \eta)
\end{equation}
with $c(r, \eta)$ sufficiently small.
\end{mainthm}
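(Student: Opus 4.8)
The plan is to take as reference solution one of the explicit solutions built in Section~\ref{Sec:Bifurcation}, choosing the planar field $U$ to be \emph{dissipation enhancing} so that the advected component $\tilde b_3$ relaxes on the enhanced time scale, and then to superimpose an exponentially small genuinely three–dimensional perturbation that forces a nondegenerate (hyperbolic) zero at the initial time. Concretely, I would let $U(x_1,x_2)=(f(x_2),0)$ be a stationary shear solving \eqref{TaylorDef}, with $f$ having critical points of maximal vanishing order $n_0=2$, so that by Theorem~\ref{teo:bcz} it is dissipation enhancing on $H_{\mathrm{shear}}$ (see \eqref{def:hshear}) at rate $\lambda(\eta)=C\eta^{1/2}$. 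I would then pick the planar magnetic profile $\tilde b_3^{\mathrm{in}}=\bar b+g^{\mathrm{in}}$ with $\bar b=\langle \tilde b_3^{\mathrm{in}}\rangle>0$ and $g^{\mathrm{in}}\in H_{\mathrm{shear}}$, arranged so that $\tilde b_3^{\mathrm{in}}$ vanishes somewhere (forcing $\|g^{\mathrm{in}}\|_{L^\infty}\geq \bar b$). Since the shear flow preserves $H_{\mathrm{shear}}$, the fluctuation $g(t)=\tilde b_3(t)-\bar b$ stays mean–free along streamlines and \eqref{passo 0} holds with $\Lambda=\lambda$. Finally I would set $u^{\mathrm{in}}_\eta=U$ and $b^{\mathrm{in}}_\eta=(0,0,\tilde b_3^{\mathrm{in}})+\varepsilon(\eta)\,w$, where $w$ is a \emph{fixed} divergence–free field depending on $x_3$, chosen so that $b^{\mathrm{in}}_\eta$ has an isolated hyperbolic zero $x^*$ near $\{\tilde b_3^{\mathrm{in}}=0\}$, and $\varepsilon(\eta)>0$ is (super)exponentially small.

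For the disappearance of equilibria at large times I would feed \eqref{passo 0} into Theorem~B' (estimate \eqref{eq:enhanced hrBesov} with $s=2>d/2=1$), obtaining through the embedding $H^2(\T^2)\hookrightarrow L^\infty$ the bound $\|g(t)\|_{L^\infty}\lesssim \eta^{-1}e^{-C\eta^{1/2}t}\|g^{\mathrm{in}}\|_{H^2}$. Solving $\eta^{-1}e^{-C\eta^{1/2}t}\|g^{\mathrm{in}}\|_{H^2}<\bar b/2$ yields $\tilde b_3(t)\geq \bar b/2>0$ for all $t\geq c_2|\ln\eta|\,\eta^{-1/2}$: the logarithmic correction is precisely the price of the $\eta^{-s/2}$ loss when passing from $L^2$ to $L^\infty$, and it produces the announced rate $\lambda_{\mathrm{rec}}\sim\eta^{1/2}/|\ln\eta|$. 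To transfer this to the true solution I would invoke the quantitative stability estimate of Theorem~\ref{thm:stab} with reference $(\tilde u,\tilde b)$: since $v^{\mathrm{in}}=0$ and $m^{\mathrm{in}}=\varepsilon w$, taking $\varepsilon(\eta)$ small enough relative to $\exp(\tfrac{C}{2}\int_0^T F)$ guarantees both that the solution exists on $[0,T]$ with $T=2c_2|\ln\eta|\eta^{-1/2}$ and that $\|b(t)-\tilde b(t)\|_{H^r}\leq\delta$ there; by $H^r\hookrightarrow C^0$ this gives $b_3(t)\geq \bar b/4>0$, hence no zeros, on the whole window $[c_2|\ln\eta|\eta^{-1/2},2c_2|\ln\eta|\eta^{-1/2}]$. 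Here one must check that $\int_0^T F$, with $F=\|\tilde u\|_{H^{r+1}}+\|\tilde b\|_{H^{r+1}}$, is finite: this again follows from Theorem~B' at regularity $r+1$, which bounds $\|g(t)\|_{H^{r+1}}$ uniformly in time (with an $\eta^{-(r+1)/2}$ constant), the $\eta$–dependence being harmless since $\varepsilon(\eta)$ may be taken as small as needed.

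The delicate point — and the main obstacle — is the persistence of an equilibrium on an interval $[0,c_1]$ with $c_1$ \emph{independent of $\eta$}, because the perturbation $\varepsilon w$ that creates the zero is exponentially small, so the Jacobian $\nabla b$ at $x^*$ has determinant of size $\sim\varepsilon^2$ and smallest singular value $\sim\varepsilon$; a naive stability-of-hyperbolic-zeros argument would only give persistence time $\sim\varepsilon$. The resolution I propose exploits the anisotropic structure: the third row of $\nabla b$ is $(\partial_1\tilde b_3,\partial_2\tilde b_3,0)$ of size $O(1)$, while the first two rows and the whole vector $\partial_t(b_1,b_2)$ remain $O(\varepsilon)$ (because $\tilde u$ carries no Lorentz force, the reference generates no in–plane magnetic field, and the perturbation keeps $b_1,b_2$ and the induced velocity correction of order $\varepsilon$ on $O(1)$ times). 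Writing the implicit–function relation $\dot x^*=-(\nabla b)^{-1}\partial_t b$ and solving the linear system directly, the $O(1)$ entry $\partial_t b_3$ is absorbed by the $O(1)$ third row, while the $O(\varepsilon)$ in–plane data are matched by the $O(\varepsilon)$ rows; consequently $\dot x^*=O(1)$ and the small singular values drift at relative rate $O(1)$, so nondegeneracy (and hyperbolicity, for a suitable $w$) survives for an $\eta$–independent time $c_1$. Once the presence of an equilibrium on $[0,c_1]$ and its absence on $[c_2|\ln\eta|\eta^{-1/2},2c_2|\ln\eta|\eta^{-1/2}]$ are established, the change of magnetic topology follows, and the two robustness properties — with respect to the target time and to small $H^r\hookrightarrow C^1$ perturbations of size $c(r,\eta)$ as in \eqref{eqPertIntro2} — are inherited from the stability of nondegenerate zeros together with Theorem~\ref{thm:stab}, yielding the structural stability claim.
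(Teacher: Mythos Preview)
Your proposal is correct and follows essentially the same approach as the paper: the same shear-flow reference solution with $\lambda(\eta)=C\eta^{1/2}$, the same use of Theorem~B' to pass from $L^2$ to $L^\infty$ decay (yielding the $|\ln\eta|\,\eta^{-1/2}$ reconnection time), the same stability Theorem~\ref{thm:stab} with a superexponentially small $\varepsilon(\eta)$ to control the perturbation on the long interval, and the same anisotropic mechanism for the $\eta$-independent persistence of the zero. The paper implements the small-time step by a two-stage implicit-function/contraction argument---first solving $b_3=0$ for $x_1(t,y,z)$, then running a contraction in $(y,z)$ for $(m_1,m_2)$, using that the $2\times2$ Jacobian of $(m_1,m_2)$ drifts from its initial value $\varepsilon\,\mathrm{Id}$ only by $O(\varepsilon t)$---which is precisely the integrated version of your ODE sketch $\dot x^*=-(\nabla b)^{-1}\partial_t b$ with the same $O(\varepsilon)/O(\varepsilon)$ cancellation.
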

\begin{proof}
We divide the proof in several steps.\\
\\
\underline{\em Step 1} Reference solution.\\
\\
We consider an explicit solution of \eqref{eq:mhd} of the following form: define $(\tilde u,\tilde b)$ as
\begin{equation}\label{DefUBAgain}
\tilde{u} := ( U_1, U_2, 0), 
\qquad \tilde{b} := (0, 0, \tilde{b}_3),
\end{equation}
where 
\begin{itemize}
\item $U:=(U_1, U_2) : (x_1, x_2) \in \T^2 \to \R^2$ is a stationary solution of the 2D Euler equations;
\item $U$ is dissipation enhancing with rate $\lambda(\eta)$;
\item $\tilde{b}_3 : (t,x_1, x_2) \in [0, \infty)\times \T^2   \to \R 
$ is a solution of the advection-diffusion equation 
\begin{equation}\label{AdvDiffForB3Again}
\partial_t \tilde{b}_3   + (U \cdot \nabla ) \tilde{b}_3 = \eta \Delta \tilde{b}_3.
\end{equation}
\end{itemize}
We couple the equation \eqref{AdvDiffForB3Again} with an initial condition 
$$
\tilde{b}_3\big|_{t=0}=\tilde{b}^\mathrm{in}_3,
$$ 
where the function $\tilde{b}^\mathrm{in}_3:\T^2\to\R$ is chosen so that:
\begin{itemize}
\item there exists $(x_1^*,x_2^*)\in \T^2$ such that $\tilde{b}^\mathrm{in}_3(x_1^*,x_2^*)=0$ 
and $\partial_{x_1}\tilde{b}^\mathrm{in}_3(x_1^*,x_2^*)\simeq M$;
\item it has strictly positive average, i.e. 
\begin{equation}\label{fkaldkfalkfmkasl}
\langle \tilde{b}^\mathrm{in}_3 \rangle = M,
\end{equation}
for some given $M>0$
and 
$$
\| \tilde{b}^\mathrm{in}_3\|_{H^r} \simeq M;
$$
\item it can be written as $\tilde{b}^\mathrm{in}_3:=M+\rho^\mathrm{in}$, with $\rho^\mathrm{in}$ belonging to the space $H_\mathrm{shear}$, as in \eqref{def:hshear}.
\end{itemize}
The couple \eqref{DefUBAgain} will be our reference solution. 
A solution of the type \eqref{DefUBAgain} which satisfy all the above conditions is the following: we take as initial datum of \eqref{AdvDiffForB3Again}, the function
\begin{equation}\label{esempio bin}
\tilde b^\mathrm{in}_3=M-2M\cos \left(x_1 - \frac{\pi}{3}-x_1^*\right),
\end{equation}
with the velocity field chosen as a shear flow of the form
$$
U(x_1,x_2)=(f(x_2),0),
$$ 
for some $f\in C^3(\T)$ satisfying the assumptions of Theorem \ref{teo:bcz} with $n_0=2$. For example, one can consider the Kolmogorov flow, i.e. $f(x_2)=\sin x_2$. It follows that the rate of enhanced dissipation in the functional space $H_\mathrm{shear}$, is given by
\begin{equation}
\lambda(\eta)=C\,\eta^\frac12,
\end{equation}
for some constant $C>0$.\\
\\
\underline{\em Step 2} Perturbative argument and estimates on Sobolev norms.\\
\\
We consider a divergence-free initial datum $(b^\mathrm{in}, u^\mathrm{in})$ of the following form: 
\begin{align*}
b^\mathrm{in}(x_1, x_2, x_3) &= \tilde{b}^\mathrm{in}(x_1, x_2) +  m^\mathrm{in} (x_1, x_2, x_3),\\
u^\mathrm{in} &= \tilde{u}^\mathrm{in},
\end{align*}
where $\tilde u$ and $\tilde b$ are defined as in the previous step, while $m^\mathrm{in} (x_1, x_2, x_3)$ is a zero average smooth divergence-free vector field with a regular equilibrium point at $(x_1^*,x_2^*,x_3^*)\in \T^3 $ and such that 
$$
\| m^\mathrm{in} \|_{H^r} \simeq \varepsilon \ll 1,
$$ 
for some $r > 5/2$. The parameter $\e$ will be chosen later. We will also choose $m^\mathrm{in}$ in such a way that $(x_1^*, x_2^*, x_3^*)$ is an hyperbolic equilibrium of $b^{in}$ with 
\begin{equation}\label{fjdskjhgn}
\nabla b^\mathrm{in} \, (x_1^*,x_2^*,x_3^*) = \left(
\begin{array}{ccc}
0 & \varepsilon & 0 \\
0 & 0 & \varepsilon \\
c M  & 0 & 0
\end{array}
\right)
\end{equation}
for some absolute constant $c \neq 0$.
In particular 
$$
\det ( \nabla b^\mathrm{in}) \big|_{(x_1^*,x_2^*,x_3^*)} = c \varepsilon^2M.
$$
thus the equilibrium $(x_1^*,x_2^*,x_3^*)$ is regular.
To simplify the argument we consider an initial perturbation of the form
\begin{equation}\label{semplificazione m}
m_1^\mathrm{in}=m_1^\mathrm{in}(x_2),\qquad m_2^\mathrm{in}=m^\mathrm{in}_2(x_3), \qquad m_3^\mathrm{in}=0
\end{equation}
satisfying
\begin{equation}\label{semplificazione m 2}
\frac{\de}{\de x_2}m_1^\mathrm{in}(x_2^*) = \frac{\de}{\de x_3}m^\mathrm{in}_2(x_3^*) = \e.
\end{equation}
For example, if we set $b_3^\mathrm{in}$ as in \eqref{esempio bin}, we can consider
\begin{equation}\label{Minitial}
m^\mathrm{in}(x_1,x_2,x_3)=\e(\sin (x_2- x_2^*), \sin (x_3 - x_3^*), 0).
\end{equation}
Note that under these choices \eqref{fjdskjhgn} holds with $c = - \sqrt{3}$.

Under the above assumptions we have a local solution $(u,b)$ of the \eqref{eq:mhd} equations arising from $(u^\mathrm{in}, b^\mathrm{in})$ with life-span quantified by Theorem \ref{thm:stab}. In particular, for any $T>0$ we can choose $\e$ small enough so that the solution exists up to time $T$. This is crucial because it ensures that the (estimated) reconnection time remains within the time range of local existence. To conclude this step we provide estimates on the Sobolev norms of our solution that will be frequently used in the sequel. \\

We start by considering the reference solution $(\tilde u,\tilde b)$. 
First of all, we have that 
\begin{equation}\label{fdhsjkjdhgfU}
\|\tilde u\|_{H^r}\simeq \|\tilde u\|_{B^r_{\infty,2}}\simeq 1.
\end{equation} 
For the magnetic field, we can rewrite $\tilde{b}^\mathrm{in}_3:=M+\rho^\mathrm{in}$ with $\rho^\mathrm{in}\in H_\mathrm{shear}$ such that $\|\rho^\mathrm{in}\|_{H^r} \simeq M$.
Next, we rewrite $\tilde b_3$ as 
\begin{equation}\label{fjdksldkfjgksldkgjksl}
\tilde{b}_3(t,x) = \langle \tilde{b}_3 \rangle + \rho(t,x),
\end{equation}
where $\rho$, the zero-mean part of the tracer, satisfies the advection-diffusion equation
\begin{equation}\label{TransportForRho}
\begin{cases}
\partial_t \rho  + (U \cdot \nabla )  \rho = \eta \Delta  \rho,\\
\rho_{|t=0}=\rho^\mathrm{in}.
\end{cases}
\end{equation}
This follows because $\tilde{b}_3$ also satisfies this equation. Since $ \rho^\mathrm{in} \in H_\mathrm{shear}$ we have the following enhanced diffusion inequality 
\begin{equation}\label{EnhDiff} 
\| \rho(t,\cdot)  \|_{L^2} \leq C e^{-\lambda(\eta) t} \| \rho^\mathrm{in} \|_{L^2},
\end{equation}
for all $t\geq 0$. Thus, since $\|\rho^\mathrm{in}\|_{H^r}\simeq M$, we have that
\begin{align}
\|\rho(t,\cdot)\|_{H^r}&\leq Me^t,\label{stima rho 1}\\
\|\rho(t,\cdot)\|_{H^r}&\leq \frac{C M}{\eta^{\frac{r}{2}}} e^{- \lambda(\eta) t}\label{stima rho2},
\end{align}
where \eqref{stima rho 1} follows from the fact that $\rho$ solves the linear equation \eqref{TransportForRho} and $U$ is time-independent, while \eqref{stima rho2} is a consequence of Theorem B. We point out that the estimate \eqref{stima rho 1} is better than \eqref{stima rho2} for
$$
0\leq t\leq \bar{T}_\eta:=\frac{1}{1+\lambda(\eta)}\ln\left(\frac{C}{\eta^{\frac{r}{2}}}\right).
$$
By using this consideration, we find that
\begin{equation}
f(T):=\frac{1}{\e}-\frac{C}{2} \int_0^T e^{\frac{C}{2} \int_0^s \|  \tilde{u}(\tau, \cdot) \|_{H^{r+1}} + \|  \tilde{b} (\tau, \cdot) \|_{H^{r+1}}   \, \de \tau  } \, \de s\geq \frac{1}{2\e},
\end{equation}
as long as
\begin{equation}\label{scelta epsilon}
\e\lesssim Me^{-CM\eta^{-\frac{r}{2}}}e^{-MT}.
\end{equation}
Indeed, we notice that
\begin{align}
\int_0^t \|  \tilde{u}(\tau, \cdot) \|_{H^{r+1}} + \|  \tilde{b} (\tau, \cdot) \|_{H^{r+1}}   \, \de \tau\lesssim Mt+M(e^t-1),&\qquad \mbox{ for }0\leq t\leq \bar{T}_\eta,\\
\int_0^t \|  \tilde{u}(\tau, \cdot) \|_{H^{r+1}} + \|  \tilde{b} (\tau, \cdot) \|_{H^{r+1}}   \, \de \tau\lesssim Mt+M(e^{\bar{T}_\eta}-1)+\frac{CM}{\eta^{\frac{r}{2} }},&\qquad \mbox{ for }\bar{T}_\eta\leq t< T.
\end{align}
So, we apply Theorem \ref{thm:stab} and we find that 
\begin{equation}\label{bound m tempi piccoli}
\|v(t, \cdot) \|_{H^r} + \|m(t,\cdot)\|_{H^r}\lesssim \e, \qquad \mbox{for }0\leq t\ll 1/M,
\end{equation}
and by \eqref{scelta epsilon} we also have that
\begin{equation}\label{stima m tutti i tempi}
\|v(t, \cdot) \|_{H^r} + \|m(t,\cdot)\|_{H^r}\lesssim \e\, e^{M t}e^{CM\eta^{-\frac{r}{2}}}\ll M, \qquad \mbox{for } \bar{T}_\eta\leq t< T.
\end{equation}
\\
\\
\underline{\em Step 3} Small times regime.\\
\\
The stability of hyperbolic equilibria implies that the solution $b(t,\cdot)$ has at least one hyperbolic equilibrium for $t$ sufficiently small. In this step we show that the solution retains one hyperbolic equilibrium for $0\leq t\leq c$ for some small constant $c$ which does not depend on $\eta$. First of all, we consider the reference solution $(\tilde u,\tilde b)$ and we recall that 
$$
\tilde b(t,x)=(0,0,M+\rho(t,x)).
$$
We rewrite the third component as follows
$$
\tilde b_3(t,x)=M+(\rho(t,x)-\rho^\mathrm{in}(x))+\rho^\mathrm{in}(x).
$$
By using the equation \eqref{TransportForRho}, together with the bound \eqref{stima rho 1}, for $0\leq t\ll 1$ (independent on $\eta$) it follows that 
\begin{align}
\|\rho(t,\cdot)-\rho^\mathrm{in}\|_{L^\infty}&\leq \int_0^t (\|(U\cdot\nabla)\rho(s,\cdot)\|_{L^\infty}+\eta\|\Delta\rho(s,\cdot)\|_{L^\infty})\de s\nonumber\\
&\leq M(1+\eta) (e^t-1)\leq M(1+\eta)t.\label{faccio uscire t}
\end{align}
as well as
\begin{align}
\|\partial_{x_1} \rho(t,\cdot)- \partial_{x_1} \rho^\mathrm{in}\|_{L^\infty}&
\leq \int_0^t (\|(U\cdot\nabla) \partial_{x_1} \rho(s,\cdot)\|_{L^\infty} + 
 \|(\partial_{x_1}U\cdot\nabla)\rho(s,\cdot)\|_{L^\infty}
+\eta\|\Delta\partial_{x_1}\rho(s,\cdot)\|_{L^\infty})\de s\nonumber\\
&\leq M(1+\eta) (e^t-1)\leq M(1+\eta)t.\label{faccio uscire t}
\end{align}
Thus, recalling our choice of $\rho^\mathrm{in}$ we have in fact 
\begin{equation*}
\tilde b_3(t,x_1,x_2) = M + \rho^\mathrm{in}(x_1) +  O\big(M(1 + \eta) t \big) = 
M - 2M\cos \left(x_1 - \frac{\pi}{3}-x_1^*\right) + O\big(M(1+ \eta) t\big)
\end{equation*}
as well as
\begin{align}\label{Monotonicity}
\partial_{x_1} \tilde b_3(t,x_1,x_2) 
& = \partial_{x_1} \rho(t,x_1, x_2) 
\\ & \nonumber
= \partial_{x_1} \rho^\mathrm{in}(x_1, x_2) + O \big(M(1 + \eta)t  \big) = 2 M \sin  \left(x_1 - \frac{\pi}{3}-x_1^*\right)   
+ O \big( M(1 + \eta) t \big). 
\end{align}
We now consider the solution $(u,b)$: we can write, for every $x\in\T^3$
$$
b(t,x)=(m_1(t,x),m_2(t,x),\tilde b_3(t,x_1,x_2)+m_3(t,x)).
$$
Using the equation for $m$, similarly to what we have done for $\rho$ in \eqref{faccio uscire t}, it is straightforward to show that
\begin{equation}\label{t per m}
\|m(t,\cdot)-m^\mathrm{in}(\cdot)\|_{H^r}\leq \e CM t.
\end{equation}
Since $b_3 = \tilde b_3 + m_3$ and $m_3^\mathrm{in}= 0$, using \eqref{faccio uscire t}-\eqref{Monotonicity}-\eqref{t per m}  
we deduce
\begin{align}\label{fndjskdjgnsk1}
b_3(t,x_1,x_2, x_3) & = M + \rho^\mathrm{in}(x_1) +  O\big(M(1 + \eta)t\big) +  m_3(t,\cdot)-m_3^\mathrm{in}(\cdot) 
\\ \nonumber
&
= 
M - 2M\cos \left(x_1 - \frac{\pi}{3}-x_1^*\right) + O\big( M(1+\varepsilon+ \eta) t\big)
\end{align}
as well as
\begin{align}\label{fndjskdjgnsk2}
 \partial_{x_1}  b_3(t,x_1,x_2,x_3) 
& 
= \partial_{x_1} \rho^\mathrm{in}(x_1, x_2) + O \big(M (1 + \eta)t  \big) 
+ \partial_{x_1} m_3(t, \cdot) - \partial_{x_1} m_3^\mathrm{in}(\cdot) 
\\ & \nonumber = 2 M \sin  \left(x_1 - \frac{\pi}{3}-x_1^*\right)  
+ O \big( M(1 + \varepsilon + \eta) t \big). 
\end{align}
Thus, by a continuity/monotonicity argument we see that there exists $c >0$ sufficiently small such that for all
 $(t, y, z) \in [0, c] \times \T^2$ there exists a unique function $x_1(t,y,z)$ with values in $[x_1^* - \frac{\pi}{6}, x_1^*  + \frac{\pi}{6}]$  
 which satisfies  
\begin{equation}\label{funzione implicita}
b_3(t,x_1(t,y,z),y,z)=0.
\end{equation}

By the uniqueness of $x_1(t,y,z)$ and the identity  
$$
b_3(0,x_1(0,y,z),y,z)= \tilde b^\mathrm{in}_3(0, x_1(0,y,z) ) = M-2M\cos \left(x_1(0,y,z) - \frac{\pi}{3}-x_1^*\right),$$
 we deduce that
\begin{equation}\label{fdhsjkdhbgfhjskhdbghjsdhbgfhjs}
x_1(0,y,z)=x_1^*,
\end{equation}
and 
\begin{equation}\label{derivate di x}
\partial_y x_1(0,y,z)=0, \qquad \partial_z x_1(0,y,z)=0.
\end{equation}
From \eqref{funzione implicita} we also deduce  
\begin{equation}\label{partialX1}
\partial_t x_1 (t,y,z) = - \frac{\partial_t b_3(t,x_1(t,y,z),y,z)}{\partial_{x_1}b_3(t,x_1(t,y,z),y,z)}, 
\end{equation}
\begin{equation}\label{partialX2}
\partial_y x_1 (t,y,z) = - \frac{\partial_{x_2} b_3(t,x_1(t,y,z),y,z)}{\partial_{x_1}b_3(t,x_1(t,y,z),y,z)}, 
\end{equation}
\begin{equation}\label{partialX3}
\partial_z x_1 (t,y,z) = - \frac{\partial_{x_3} b_3(t,x_1(t,y,z),y,z)}{\partial_{x_1}b_3(t,x_1(t,y,z),y,z)}.
\end{equation}
In particular, the function $x_1 (t,y,z)$ is more regular than $b_3$ (the denominator is 
non-zero from \eqref{fndjskdjgnsk2}; see also \eqref{fdhjskjdhghjs} below). 
We will need some quantitative estimates on \eqref{partialX1}-\eqref{partialX2}-\eqref{partialX3}. To do so, we start noting that from \eqref{fndjskdjgnsk2} taking the above mentioned constant 
$c >0$ sufficiently small we have in fact
\begin{equation}\label{fdhjskjdhghjs}
|\partial_{x_1}  b_3(t,x_1(t,y,z),y,z) | > \frac{M}{2}, \qquad \forall (t,y,z) \in [0,c] \times \T^2.
\end{equation} 
On the other hand, we have 
\begin{equation}\label{fhdjskjh3}
\| \partial_t  b_3 \|_{L^\infty} = \| \partial_t \rho \|_{L^\infty}  + \| \partial_t m_3 \|_{L^\infty},
\end{equation}
as well as\footnote{Here we use a crude $L^\infty$ bound since as a finer estimate is not relevant for our purposes.}
\begin{equation*}
\| \partial_t \rho \|_{L^\infty} \leq \| U \|_{L^\infty} \| \nabla \rho \|_{L^\infty} + \eta \| \Delta \rho \|_{L^\infty},
\end{equation*}
and
\begin{align*}
\| \partial_t m \|_{L^\infty} 
& \leq  \| \tilde{u} \|_{L^\infty} \| \nabla m \|_{L^\infty} 
+ \| v \|_{L^\infty} \| \nabla \tilde{b} \|_{L^\infty} + \| v \|_{L^\infty} \| \nabla m \|_{L^\infty} 
\\
 &
  + 
 \| \tilde{b} \|_{L^\infty} \| \nabla v \|_{L^\infty}+ \| m \|_{L^\infty} \| \nabla \tilde{u} \|_{L^\infty} 
 + \| m \|_{L^\infty}  \| \nabla  v \|_{L^\infty} + \eta \| \Delta{m} \|_{L^\infty}.
 \end{align*}
Thus using \eqref{stima rho 1}-\eqref{bound m tempi piccoli}-\eqref{fdhsjkjdhgfU} we get 
\begin{equation}\label{fhdjskjh1}
\| \partial_t \rho \|_{L^{\infty}} \lesssim (1 +\eta) M,
\end{equation}
\begin{equation}\label{fhdjskjh2}
\| \partial_t m \|_{L^{\infty}}
\lesssim (1 + M + \eta) \varepsilon ,
\end{equation}
for all $t \in [0, c]$, taking $c >0$ sufficiently small. \\
Taking the $\nabla$ of the equations for $\rho$ and $m$ we can prove similarly that 
\begin{equation}\label{fhdjskjh1Bis}
\| \partial_t \nabla \rho \|_{L^{\infty}} \lesssim (1 +\eta) M ,
\end{equation}
\begin{equation}\label{fhdjskjh2Bis}
\| \partial_t \nabla  m \|_{L^{\infty}} 
\lesssim (1 + M + \eta) \varepsilon ,
\end{equation}
Combining \eqref{fdhjskjdhghjs}-\eqref{fhdjskjh3}-\eqref{fhdjskjh1}-\eqref{fhdjskjh2}
we get
\begin{equation}\label{fdhjskjdhghjs1}
|\partial_t x_1(t,y,z) | \lesssim 1, \qquad \forall (t,y,z) \in [0,c] \times \T^2.
\end{equation} 
Moreover, we use 
\begin{equation}\label{fmdskjdjhgjhhhhskjldkg}
\|b(t,\cdot)\|_{H^r} \leq M + C \varepsilon,
\end{equation} 
to get that
\begin{equation}\label{fdhjskjdhghjs2}
|\partial_y x_1(t,y,z) | + |\partial_z x_1(t,y,z) | \lesssim 1, \qquad \forall (t,y,z) \in [0,c] \times \T^2.
\end{equation} 
However, the fundamental observation that we are going to make is that we can gain a 
factor $t$ in the estimates \eqref{fdhjskjdhghjs2}. To fix the ideas we will prove this for 
$\partial_y x_1 (t,y,z)$. Hereafter we will use the shortened notation  
$$
f(\star) := f(t,x_1(t,y,z),y,z)).
$$
We start noting that from \eqref{partialX2} we have
\begin{align*}
& \partial_t \partial_y x_1 (t,y,z) 
\\
& = - \frac{
\Big( \partial_t \partial_{x_2} b_3(\star ) 
+ \partial_{x_1} \partial_{x_2} b_3(\star ) \partial_t x_1(t,y,z)\Big) \partial_{x_1}b_3(\star)  -
\Big( \partial_t \partial_{x_1} b_3(\star ) 
+ \partial_{x_1} \partial_{x_2} b_3(\star ) \partial_t x_1(t,y,z) \Big) \partial_{x_2} b_3(\star)
}
{(\partial_{x_1}b_3(\star))^2}.
\end{align*}

From \eqref{fdhjskjdhghjs}-\eqref{fhdjskjh1Bis}-\eqref{fhdjskjh2Bis}-\eqref{fdhjskjdhghjs1}-\eqref{fmdskjdjhgjhhhhskjldkg}
we then deduce
$$
\sup_{t \in [0,c]} \left\| \partial_t \partial_y x_1 (t,\cdot) \right\|_{L^{\infty}_{y,z}} \lesssim 1,
$$
Combining this with \eqref{derivate di x} and the fundamental theorem of calculus  we get
\begin{equation}\label{GainT1}
\| \partial_y x_1 (t,\cdot) \|_{L^{\infty}_{y,z}} \leq \int_{0}^{t} \left\| \partial_{\tau} \partial_y x_1 (\tau,\cdot) \right\|_{L^{\infty}_{y,z}}\de \tau
\lesssim t, \qquad \forall t \in [0,c].
\end{equation}
In the exact same way we deduce 
\begin{equation}\label{GainT2}
\| \partial_z x_1 (t,\cdot) \|_{L^{\infty}_{y,z}} 
\lesssim t, \qquad \forall t \in [0,c].
\end{equation}

We now perform a fixed point argument in order to find, 
for all $t \in [0, c]$ (recall $0 < c \ll 1$), a couple $(\bar x_2,\bar x_3) \in \in (x_2^*,x_3^*) + [-\gamma,\gamma]^2$ such that
$$
m_1(t,x_1(t,\bar x_2,\bar x_3),\bar x_2,\bar x_3)=m_2(t,x_1(t,\bar x_2,\bar x_3),\bar x_2,\bar x_3)=0,
$$
where $\gamma \in (0, \pi)$ will be chosen later to be sufficiently small.
To do this, we define the vector field
\begin{equation}\label{definizione F}
F(t,y,z)=(m_1(t,x_1(t,y,z),y,z),m_2(t,x_1(t,y,z),y,z)),
\end{equation}
and we look for a fixed point of the map
\begin{equation}
\Phi^t : (y,z) \in (x_2^*,x_3^*) + [-\gamma,\gamma]^2 \to \Phi^t (y,z) := (y,z)-( J_F(0,x_2^*,x_3^*))^{-1}F(t,y,z).
\end{equation}
We recall that $(x_1^*,x_2^*,x_3^*)$ is the hyperbolic equilibrium of $b^\mathrm{in}$ and we have denoted with $J_f$ the Jacobian matrix of the function $f$ with respect to the variables $(y,z)$. 

We will show that for all $0 \leq t \leq \frac{c}{M}$ with $c >0$ sufficiently small the map $\Phi^t$ is a contraction.  
We start noting  that
\begin{equation}\label{jacobiano Phi}
J_{\Phi^t}(y,z)=\mathrm{Id}-( J_F(0,x_2^*,x_3^*))^{-1}J_F(t,y,z),
\end{equation}
with $\mathrm{Id}$ being the identity matrix of $\R^{2\times 2}$.
From \eqref{semplificazione m} we deduce 
\begin{equation}
J_F(0,x_2^*,x_3^*) =  \left(
\begin{array}{cc}
\frac{\de}{\de x_2} m_1^\mathrm{in}(x_2^*)& 0
\\
0 & \frac{\de}{\de x_3} m_2^\mathrm{in}(x_3^*)
\end{array}
\right),
\end{equation}
thus \eqref{semplificazione m 2} gives 
\begin{equation}\label{jacobiano F 0}
J_F(0,x_2^*,x_3^*)^{-1} = \e^{-1}\mathrm{Id}.
\end{equation}

Hereafter we denote with $\|(\cdot)\|_{HS}$,
$\|(\cdot)\|_{OP}$ and $\|(\cdot)\|$ the Hilbert-Schmidt, operator norm, and Euclidean norm over $\R^2$, respectively. 
We will now show that:
\begin{equation}\label{differenza jacobiani FBis}
 \|J_F(t,y,z)-J_F(0,x_2^*, x_3^*)\|_{HS} \leq \e CM t, \qquad \forall (t, y, z) \in [0, c/M] \times (x_2^*,x_3^*) + [-\gamma,\gamma]^2.
\end{equation}

To prove \eqref{differenza jacobiani FBis} we note that 
\begin{align*}
 J_F(t,y,z)-J_F(0,x_2^*, x_3^*) 
&=  \left(
\begin{array}{cc}
\partial_{x_2} m_1(\star) -  \frac{\de}{\de x_2} m_1^\mathrm{in}(x_2^*) & \partial_{x_3} m_1(\star)
\\
\partial_{x_2} m_2(\star)  & \partial_{x_3} m_2(\star)
- \frac{\de}{\de x_3} m_2(x_3^*) 
\end{array}
\right)
\\
& +
\left(
\begin{array}{cc}
\partial_{x_1} m_1(\star) \partial_y x_1(t,y,z)  & \partial_{x_1} m_1(\star) \partial_z x_1(t,y,z)
\\
\partial_{x_1} m_2(\star) \partial_y x_1(t,y,z)  & \partial_{x_1} m_2(\star) \partial_z x_1(t,y,z) 
\end{array}
\right).
\end{align*}

We first focus on the diagonal entries of the first matrix. We decompose 
\begin{align}\nonumber
\partial_{x_2}  m_1(\star)  -  \frac{\de}{\de x_2} m_1^\mathrm{in}(x_2^*) & = 
\partial_{x_2} m_1(\star) - \partial_{x_2} m_1(t,x_1(t,x_2^*,x_3^*),x_2^*,x_3^*) 
\\ \label{Combdksjhf1} & 
+ \partial_{x_2} m_1(t,x_1(t,x_2^*,x_3^*),x_2^*,x_3^*)  -  \frac{\de}{\de x_2} m_1^\mathrm{in}(x_2^*).
\end{align}
By the chain rule we have
\begin{equation*}
\partial_y\partial_{x_2} m_1(\star)  = \partial_{x_2} \partial_{x_2}  m_1(\star)+ 
 \partial_{x_1} \partial_{x_2}  m_1(\star) \partial_y x_1(t,y,z)
\end{equation*}
as well as
\begin{equation*}
\partial_z \partial_{x_2} m_1(\star)  = \partial_{x_3} \partial_{x_2}  m_1(\star)+ 
 \partial_{x_1} \partial_{x_2}  m_1(\star) \partial_z x_1(t,y,z)
\end{equation*}

Thus \eqref{bound m tempi piccoli}-\eqref{GainT1}-\eqref{GainT2} imply that 
$$ (y,z)  \in (x_2^*,x_3^*) + [-\gamma,\gamma]^2 \to \partial_{x_2}  m_1(t,x_1(t,y,z),y,z)$$ 
is 
Lipschitz 
with Lipschitz constant $C\varepsilon$ uniform over $t \in [0,c]$. Thus 
\begin{equation}\label{Combdksjhf2}
| \partial_{x_2} m_1(t,x_1(t,y,z),y,z) - \partial_{x_2} m_1(t,x_1(t,x_2^*,x_3^*),x_2^*,x_3^*) |\leq C \gamma \varepsilon.
\end{equation}

Then we observe  that 
\begin{equation*}
\frac{\de}{\de t} \partial_{x_2} m_1(\star)  = \partial_{t } \partial_{ x_2}  m_1(\star)+ 
 \partial_{x_1} \partial_{ x_2}  m_1(\star) \partial_t x_1(t,y,z)
\end{equation*}
so that
 using \eqref{fdhjskjdhghjs1}-\eqref{fhdjskjh2Bis}-\eqref{bound m tempi piccoli} we have for all $t \in [0, c/M]$: 
\begin{align}
\left\| \frac{\de}{\de t}   \partial_{x_2} m_1(\star) \right\|_{L^{\infty}_{y,z}} 
&\lesssim 
\| \partial_t \partial_{x_2} m_1(\star) \|_{L^{\infty}_{y,z}} 
+
\| \partial_{x_1} \partial_{x_2} m_1(\star) \|_{L^{\infty}_{y,z}} 
\| \partial_t x_1(t , \cdot) \|_{L^{\infty}_{y,z}} \\ \nonumber
&
\lesssim (1 + M + \eta) \varepsilon.
\end{align}
Thus using  the fundamental theorem of calculus and \eqref{fhdjskjh2Bis} we have
\begin{align}
\left| \partial_{x_2} m_1(t,x_1(t,x_2^*,x_3^*),x_2^*,x_3^*) -  \frac{\de}{\de x_2} m_1^\mathrm{in}(x_2^*) \right| 
& \leq \int_{0}^t \left| \frac{\de}{\de \tau} \partial_{x_2} m_1(\tau ,x_1(\tau ,x_2^*,x_3^*),x_2^*,x_3^*) \right|\de\tau\nonumber\\
&\lesssim (1 + M + \eta) \varepsilon t, \qquad \forall t \in [0, c/M]\label{Combdksjhf3}.
\end{align}
From \eqref{Combdksjhf1}-\eqref{Combdksjhf2}-\eqref{Combdksjhf3} we see that taking $c, \gamma >0$ sufficiently small we have 
$$
\left\| \partial_{x_2} m_1(\star) -  \frac{\de}{\de x_2} m_1^\mathrm{in}(x_2^*) \right\|_{L^{\infty}_{y,z}} \leq 
C \gamma \varepsilon  + C \varepsilon M t , 
\qquad \forall t \in [0, c/M]
$$ 
The same bound holds for the second diagonal (with exactly the same proof).\\

The off-diagonal terms are estimated as follows. 
Again we observe that, using the chain rule and \eqref{fdhjskjdhghjs1}-\eqref{fhdjskjh2Bis}-\eqref{bound m tempi piccoli}, we have 
\begin{align*}
\left\| \frac{\de}{\de t}\partial_{x_3} m_1(\star) \right\|_{L^{\infty}_{y,z}} 
&\lesssim \| \partial_{t} \partial_{x_3} m_1(\star) \|_{L^{\infty}_{y,z}}+ \| \partial_{x_1} \partial_{x_3} m_1(\star) \|_{L^{\infty}_{y,z}} \| \partial_{t} x_1(t , \cdot) \|_{L^{\infty}_{y,z}} \\
&\lesssim (1 + M + \eta) \e, \qquad\qquad \forall t \in [0, c/M].
\end{align*}
Thus, recalling \eqref{semplificazione m} we deduce that for all $\forall t \in [0, c/M]$:
\begin{equation}
\| \partial_{x_3} m_1(\star) \|_{L^{\infty}_{y,z}} \leq 
\int_{0}^t \left\| \frac{\de}{\de \tau} \partial_z m_1(\tau, x_1(\tau ,y,z),y,z) \right\|_{L^{\infty}_{y,z}}\de \tau
\lesssim (1 + M + \eta) \varepsilon t. 
\end{equation}
The same bound holds for $\partial_y m_2(t,x_1(t,y,z),y,z)$, with the same proof.

Regarding the second matrix, using \eqref{bound m tempi piccoli}-\eqref{GainT1}-\eqref{GainT2}, we can estimate all its entries by $C \varepsilon t$, as long as $t \in [0, c]$.Putting together all these bounds we deduce 
\begin{equation}\label{costante lip Phi}
\|J_{\Phi^t}\|_{HS} \leq CM t < \frac12, \qquad  \forall (t, y, z) \in [0, c/M] \times (x_2^*,x_3^*) + [-\gamma,\gamma]^2.
\end{equation}
Thus, in order to prove that $\Phi^t$ has a unique fixed point in $(x_2^*,x_3^*) + [-\gamma,\gamma]^2$, it remains to show
that 
\begin{equation}\label{prima condizione}
\Phi^t \left( (x_2^*,x_3^*) + [-\gamma,\gamma]^2 \right) \subset (x_2^*,x_3^*) + [-\gamma,\gamma]^2.
\end{equation}
To prove that, we first show that the image of $(x_2^*,x_3^*)$ stay close to this point. Indeed, we have that
\begin{align}\label{Pidfjhskdhjgsdgf}
|\Phi^t  & (x_2^*,x_3^*) - (x_2^*,x_3^*) | 
  \leq   \| (J_F(0,x_1^*,x_2^*,x_3^*))^{-1} \|_{OP} 
\| F(t,x_1(t,x_2^*,x_3^*),x_2^*,x_3^*) \| 
\\ \nonumber
&
\leq \varepsilon^{-1} \,
\|(m_1(t,x_1(t,x_2^*,x_3^*),x_2^*,x_3^*),m_2(t,x_1(t,x_2^*,x_3^*),x_2^*,x_3^*))\|.
\end{align}
On the other hand, we have (recall \eqref{Minitial})
$$
m_1 (0,x_1(0,x_2^*,x_3^*),x_2^*,x_3^*) = m^\mathrm{in}(x_1^*,x_2^*,x_3^*) = 0,   
$$
and
$$
\frac{\de}{\de t} m_1(\star) = \partial_{\tau} m_1(\star) +  \partial_{x_1} m_1(\star) \partial_\tau x_1(\tau,y,z),
$$
thus by \eqref{bound m tempi piccoli}-\eqref{fhdjskjh2}-\eqref{fdhjskjdhghjs1} and the fundamental theorem of calculus we deduce
$$
|(m_1(t,x_1(t,x_2^*,x_3^*),x_2^*,x_3^*)| \leq \int_{0}^{t} \left| \frac{\de}{\de\tau} 
(m_1(\tau,x_1(\tau,x_2^*,x_3^*),x_2^*,x_3^*)\right| \, \de \tau \leq (1 + M + \eta) \varepsilon t.
$$
The same estimate holds for $m_2$. Plugging this into \eqref{Pidfjhskdhjgsdgf} we obtain
$$
|\Phi^t   (x_2^*,x_3^*) - (x_2^*,x_3^*) |  \leq C (1 + M + \eta)  t < \frac{\gamma}{2},
$$
as long as
$$
t<\frac{\gamma}{2 C (1 + M + \eta)}.
$$
Then, using this and \eqref{costante lip Phi} we finally obtain 
\begin{align}
|\Phi^t(y,z)-(x_2^*,x_3^*)|&\leq |\Phi^t(y,z)-\Phi^t(x_2^*,x_3^*)|+|\Phi^t(x_2^*,x_3^*)-(x_2^*,x_3^*)|\nonumber\\
&\leq CM t|(y,z)-(x_2^*,x_3^*)|+\frac{\gamma}{2}\nonumber\\
&\leq CM t \gamma + \frac{\gamma}{2} \leq \gamma,
\end{align}
as long as $t \in [0, c/M]$ with $c >0$ sufficiently small. This implies \eqref{prima condizione}. 

In conclusion, by \eqref{prima condizione} and \eqref{costante lip Phi} the map $\Phi^t$ is a contraction over $(x_2^*,x_3^*) + [-\gamma,\gamma]^2$ so that for all $t \in [0, c/M]$ there exists a unique fixed point $\bar x_2(t), \bar x_3(t) \in (x_2^*,x_3^*) + [-\gamma,\gamma]^2$ of $\Phi^t$, meaning that
\begin{equation}\label{finale F}
F(t,\bar x_2(t),\bar x_3(t))=0.
\end{equation}
Now, since $\tilde b_1 = \tilde b_2 =0$, our solution has the form
$$
b(t,\cdot)= (m_1(t,\cdot),m_2(t,\cdot),b_3(t,\cdot)).
$$ 
Thus, we deduce from \eqref{funzione implicita}, \eqref{definizione F}, \eqref{finale F}
that
\begin{equation}
b(t,x_1(t,\bar x_2(t), \bar x_3(t)), \bar x_2(t), \bar x_3(t))=0,\qquad \forall t\in [0,c/M],
\qquad 0 < c \ll 1,
\end{equation}
as claimed.
In other words, the magnetic field has a zero for sufficiently small times, independently from the resistivity parameter $\eta$. 
\\
\\
\underline{\em Step 4} Large times regime.\\
\\
In this step we prove that, after some time, the magnetic field $b$ no longer possesses any equilibrium, indicating a topological change in the magnetic lines. Moreover, we show that this occurs in an accelerated regime compared to the diffusive timescale. To achieve this, it will be enough to prove the following.\\
\\
\underline{CLAIM}: 
 For every $\frac{|\ln \eta|}{\lambda(\eta)} \lesssim t < T$ we have that
\begin{equation}\label{FinalGoal}
\| b_3(t,\cdot) - \langle \tilde{b}_3 \rangle  \|_{L^\infty} <  \langle \tilde{b}_3 \rangle.
\end{equation}
\\
This claim immediately leads to the desired conclusion, and it is actually even more than what we need. Indeed, by the triangle inequality, we easily obtain
\begin{equation}
 b_3(t,x) \geq \langle \tilde{b}_3 \rangle - | b_3(t,x) - \langle \tilde{b}_3 \rangle |>0, 
\end{equation}
for times in the interval $\frac{|\ln\eta|}{\lambda(\eta)}\lesssim t<T$ and for all $x \in \T^3$. \\

We now prove the claim: we start by re-writing the solution $(u, b)$  as
\begin{equation}\label{Def:U}
b =  \tilde{b} + m, \qquad 
u =  \tilde{u} + v
\end{equation} 
where the pair $(v, m)$ satisfies the system \eqref{eq:mhdPerturbative}.
Note that, since $\langle m(t,\cdot)\rangle=0$ for all $0\leq t\leq T$, we also have
\begin{equation}\label{AverageProperty}
\langle b (t,\cdot) \rangle=\langle \tilde b (t,\cdot)\rangle=\langle\tilde{b}_3(t,\cdot) \rangle= \langle\tilde{b}^\mathrm{in}_3 \rangle.
\end{equation}
Furthermore, we recall that $\tilde{b}^\mathrm{in}_3:=M+\rho^\mathrm{in}$, with $\rho^\mathrm{in}$ belonging to the space $H_\mathrm{shear}$, and the solution $\tilde b_3$ can be written as 
\begin{equation}\label{fjdksldkfjgksldkgjksl}
\tilde{b}_3(t,x) = \langle \tilde{b}_3 \rangle + \rho(t,x),
\end{equation}
where $\rho$ solves \eqref{TransportForRho}. Since $ \rho^\mathrm{in} \in H_\mathrm{shear}$ we have the following enhanced diffusion inequality 
\begin{equation}\label{EnhDiff} 
\| \tilde{b}_3(t,\cdot) - \langle \tilde{b}_3 \rangle \|_{L^2} = \| \rho(t,\cdot)  \|_{L^2} \leq 
C e^{-\lambda(\eta) t} \| \rho^\mathrm{in} \|_{L^2},
\end{equation}
for all $t\gtrsim\frac{1}{\lambda(\eta)}$. This is still not sufficient to prove the claim, so we proceed as follows. From the definitions \eqref{Def:U}-\eqref{fjdksldkfjgksldkgjksl} we have 
$$
b_3 - \langle b_3 \rangle  =  \tilde{b}_3 + m_3 - \langle \tilde{b}_3 \rangle=\rho+m_3, 
$$ 
which implies (recall $r > 5/2$)
\begin{equation}\label{mdfksldjgnjdsldjgn}
| b_3(t,x) - \langle \tilde{b}_3 \rangle | \leq 
\|m_3(t,\cdot)\|_{L^{\infty}}  + \| \rho(t,\cdot) \|_{L^{\infty}} 
\leq \tilde C (\|m_3(t,\cdot)\|_{H^r}  + \| \rho(t,\cdot) \|_{H^r} ). 
\end{equation} 
We now estimate the two terms on the right hand side of \eqref{mdfksldjgnjdsldjgn}. First by Theorem B we obtain 
\begin{equation}\label{MainIneq}
\| \rho(t,\cdot) \|_{H^r} \leq \frac{C M}{\eta^{\frac{r}{2}}}   e^{- \lambda(\eta) t}.   
\end{equation}
Next, we define $t^*$ such that  
\begin{equation}\label{RecTime}
t^{*} = \frac{1}{\lambda(\eta)} \ln \left( \frac{2 \tilde C C}{\eta^{\frac{r}{2}}} \right),
\end{equation}
which implies that
\begin{equation}\label{definizione che mi serve}
\frac{\tilde C C }{\eta^\frac{r}{2}}  e^{-\lambda(\eta) t^*} \leq \frac12.
\end{equation}
Thus, for $t\geq t^*$, we have that
\begin{equation}\label{MainIneq3}
\tilde C \| \rho(t,\cdot) \|_{H^r} \leq   \frac{M}{2}.
\end{equation}
Moreover, by the choice of $\e$ in \eqref{scelta epsilon}, we also have from \eqref{stima m tutti i tempi} that
\begin{equation}\label{MainIneq2} 
\|m_3(t,\cdot)\|_{H^r}\lesssim  M e^{-M (T-t)}.
\end{equation}
Then, plugging \eqref{MainIneq3} and \eqref{MainIneq2} into \eqref{mdfksldjgnjdsldjgn}, the choice of $\e$ ensures that  
\begin{align}
\| b_3(t,\cdot) - \langle \tilde{b}_3 \rangle \|_{L^{\infty}} &\leq \frac{M}{2}+M e^{-M(T-t)}.\label{mfndkslkmgklsdkmgkls}
\end{align}
Thus, for $t^*\leq t<T$, by choosing $\varepsilon >0$ small enough, i.e. for $M$ large enough or alternatively $\eta$ small enough, we see that the right hand side of \eqref{mfndkslkmgklsdkmgkls} is strictly smaller than $\langle \tilde{b}_3 \rangle = M$, thereby verifying \eqref{FinalGoal}. The claim is proved. \\

We have proved that in the time interval
$$
t^* \lesssim t < T,
$$
there are no critical points of the magnetic field. We recall that, by the choice of $U$ in Step 1,
$$
\lambda(\eta) = C\,\eta^{1/2}.
$$
By using the definition of $t^*$ in \eqref{RecTime}, this yields the accelerated reconnection rate (see Definition \ref{def:fast rec})
$$
\lambda_\mathrm{rec}(\eta) := C \frac{\eta^{1/2}}{\ln \left( \frac{1}{\eta^{\frac{r}{2}}} \right) } = \frac{2C}{r}\frac{\eta^{1/2}}{|\ln \eta|},
$$
as claimed. Finally, the structural stability can be deduced from Theorem~\ref{thm:stab}, by considering sufficiently small perturbations as in \eqref{eqPertIntro2}, together with the preceding computations.
\end{proof}

\begin{rem}\label{remark dati iniziali}
In the above theorem, we have proved the existence of initial data for the \eqref{eq:mhd} equations for which the corresponding solutions exhibit magnetic reconnection on a time scale faster than the diffusive one. An important point to note is that these initial data depend on $\eta$ trough the parameter $\e$ defined in \eqref{scelta epsilon}. The reason is straightforward: in our construction, we start from globally defined solutions $(\tilde u,\tilde b)$ and we perturb them. Since the solutions are only known to exist locally in time, we must ensure that they are defined beyond the reconnection time; otherwise, our analysis would be meaningless. As a result, the existence time of the perturbed solutions depends on the size of the perturbation, which must therefore be linked to $\eta$. A more robust continuation argument could potentially allow us to consider initial data that are independent of $\eta$.
\end{rem}

\begin{rem}
We have shown that, for the solutions we constructed, the magnetic field admits an equilibrium point at least for a time interval that is independent of the resistivity~$\eta$. This scenario can therefore be extended in the limit $\eta \to 0$, where we know that the solution admits an equilibrium point for all times in its interval of existence. In particular, since the critical point is regular, it remains close to its initial position for short times, but it may drift away for large times. Nevertheless, the non-resistive solution $b_0$ satisfies the identity
$$
b_0(t, \Phi(t, x^*)) = 0,
$$
where $x^*$ is the equilibrium of the initial datum, and $\Phi$ denotes the flow of the velocity field.
Moreover, the assumptions on the data and the initial perturbation can be chosen differently. For example, $\rho^\mathrm{in}$ can depend on both variables $(x_1,x_2)$, but this will require small changes in Step 3 on small times.
\end{rem}

\begin{rem} Instead of choosing $\tilde u_3=0$, we could select $\tilde u_3$ to be any solution of the transport equation
$$
\partial_t \tilde{u}_3 + ( U \cdot \nabla ) \tilde{u}_3   = 0.
$$
The rest of the argument would proceed with only minimal adjustments, but this choice does not lead to an improvement in our result.
\end{rem}

\begin{rem} The explicit solutions constructed in Section \ref{Sec:Bifurcation} also satisfy the Hall-MHD system. In fact, it is straightforward to verify that the Hall term vanishes, i.e.,
$$
\nabla\times((\nabla\times \tilde b)\times \tilde b)=0,
$$
provided that $\tilde b$ is sufficiently regular and has the form $\tilde b=(0,0,\tilde b_3(x_1,x_2))$.
Our proof can be easily adapted to provide an example of accelerated reconnection for the Hall-MHD system. Specifically, it suffices to prove an analogous result to Theorem \ref{thm:stab} for the Hall-MHD system, which would allow us to control the $H^r$-norm of the perturbative term $m$ in \eqref{Def:U}. A similar argument applies to the construction from Section \ref{Sec:Bifurcation}, as well as to the viscous case.
\end{rem}

\begin{rem} The construction in Theorem A is highly flexible and can be adapted to other spatial domains. The core of the proof relies on the existence of a stationary solution to the 2D Euler equations which are dissipation enhancing. Specifically, we have considered the three-dimensional torus $\T^3$ as spatial domain in order to take advantage of the result in \cite[Theorem 1.1]{BCZ}. However, the proofs of Theorem \ref{thm:stab} and Theorem B do not depend critically on this particular choice of spatial domain. For instance, one could, in principle, use the results from \cite{CZDE, CZD, CLS}  to construct further examples with different reconnection times.
\end{rem}

\section{The viscous model with a stochastic force}\label{sec:stocastica}
In this section we consider a slightly different model and we show how, with the strategy build in the previous sections, it is possible to construct solutions that show {\em fast reconnection}. We consider the forced viscous model
\begin{equation}\label{eq:mhd2}\tag{F-MHD}
\begin{cases}
\partial_t u+(u\cdot \nabla)u+\nabla p= \Delta u+(b\cdot \nabla)b+f,\\
\partial_t b+(u\cdot \nabla)b=(b\cdot \nabla)u+\eta\Delta b,\\
\dive u=\dive b=0,\\
u(0,\cdot)=u^\mathrm{in},\hspace{0.3cm} b(0,\cdot)=b^\mathrm{in},
\end{cases}
\end{equation}
where $f:[0,T]\times\T^3\to\T^3$ is a given external force, while $u^\mathrm{in}$ and $b^\mathrm{in}$ are given divergence-free initial data. Our aim is to construct solutions for which the rate of reconnection $\lambda_\mathrm{rec}(\eta)$ is faster than polynomial, and in particular of logarithmic type. We recall that for the inviscid model we started from a stationary solution of the 2D Euler equations. These allowed us to define explicit $2\frac12$-D solutions of \eqref{eq:mhd}, namely solutions depending only on the first two spatial variables. This type of solutions were considered in order to decouple the equations, providing us a simple way to construct explicit solutions of \eqref{eq:mhd}. Here we adopt a similar strategy: instead of stationary 2D Euler flows, we consider solutions to the two-dimensional Navier-Stokes equations driven by an external stochastic force
\begin{equation}\label{eq:ns}\tag{NS}
\begin{cases}
\partial_t u+(u\cdot \nabla)u+\nabla p= \Delta u+f,\\
\dive u=0,\\
u(0,\cdot)=u^\mathrm{in},
\end{cases}
\end{equation}
where the unknowns are $u:[0,T]\times\T^2\to\R^2$ and $p:[0,T]\times\T^2\to\R$, while $u^\mathrm{in}:\T^2\to\R^2$ is a given divergence-free initial datum, and $f:[0,T]\times\T^2\to\R^2$ is a given body force. We now briefly recall relevant notations and results from \cite{BBP3, BBP2}, which will be instrumental in our analysis.

\subsection{Probabilistic setup}
On the space of $L^2_{\dive}:=\{v\in L^2(\T^2;\R^2):\dive v=0\}$ we define a real complete orthonormal base
\begin{equation}
e_k(x):=
\begin{cases}
\frac{1}{\sqrt{\pi}}\frac{k^\perp}{|k|}\sin(k\cdot x),\qquad k\in\Z^2_+,\\
\frac{1}{\sqrt{\pi}}\frac{k^\perp}{|k|} \cos(k\cdot x),\qquad k\in\Z^2_-,
\end{cases}
\end{equation}
where $\Z^2_+:=\{(k_1,k_2)\in\Z^2:k_2>0\}\cup\{(k_1,k_2)\in\Z^2:k_1>0, k_2=0\}$, $\Z^2_-:=-\Z^2_+$, and $\Z^2_-\cup \Z^2_+=\Z^2_*:=\Z^2\setminus\{(0,0)\}$. Define the functional space
$$
\mathbf{L}^2:=\left\{g\in L^2(\T^2;\R^2):\int_{\T^2}g(x)\de x=0,\quad \dive g=0\right\},
$$
and let $W_t$ be a cylindrical Wiener process on $\mathbf{L}^2$ defined by
$$
W_t=\sum_{k\in\Z^2_*}e_k W_t^k,
$$
where $\{W_t^k\}_{k\in\Z^2_*}$ are a collection of i.i.d. one-dimensional Wiener processes on a common canonical filtered probability space $(\Omega,\mathcal{F},\mathcal{F}_t,\mathbb{P})$. Note that the definition of $e_k$ implies that $W_t$ is divergence-free.
We denote by $Q$ a positive Hilbert-Schmidt operator on $\mathbf{L}^2$ which can be diagonalized with respect to $\{e_k\}_{k\in\Z^2_*}$ with eigenvalues $\{q_k\}\in~\ell^2(\Z^2_*)$ defined by
$$
Qe_k=q_ke_k,\qquad k\in\Z^2_*.
$$
We assume that the force is of the type
\begin{equation}\label{forza}
f=Q\dot{W}_t,
\end{equation}
which takes the form 
$$
Q\dot{W}_t(x)=\sum_{k\in\Z^2_*}q_k e_k (x)\dot{W}_t^k,
$$
for any $t>0$ and $x\in\T^2$. On $Q$ we assume the following regularity and non-degeneracy assumption.
\begin{assumption}
There exists $\alpha$ satisfying $\alpha>\frac{5d}{2}$ and a constant C such that
\begin{equation}\label{smoothing effect Q}
\frac{1}{C}\|(-\Delta)^{-\alpha/2}u\|_{L^2}\leq \|Qu\|_{L^2}\leq C\|(-\Delta)^{-\alpha/2}u\|_{L^2}
\end{equation}
Equivalently,
\begin{equation}\label{coloring assumption}
|q_k|\approx |k|^{-\alpha}
\end{equation}
\end{assumption}
The assumption above basically says that the forcing term has high spatial regularity but cannot be $C^\infty$.
We also denote with $\mathbf{H}^s$ the space
$$
\mathbf{H}^s:=\left\{g\in H^s(\T^2;\R^2):\int_{\T^2}g(x)\de x=0,\quad \dive g=0\right\},
$$
and $\mathbf{H}:=H^\sigma$ for some fixed $\sigma \in (\alpha-2,\alpha-1)$ with $\alpha$ big enough to have the embedding $\mathbf{H}\hookrightarrow C^3$. Note that assumption \eqref{coloring assumption} and the definition of $\sigma$ imply that $\{|k|^\sigma q_k\}$ is square summable over $\Z^2_*$ and therefore $QW_t\in \mathbf{H}$ almost surely.\\
 
The following result can be found in \cite{Kuksin book, BBP}.
\begin{prop}[Proposition A.3 in \cite{BBP}]\label{propmu}
For $\mathbb{P}$-almost every $\omega\in \Omega$, for all $u^\mathrm{in}\in\mathbf{H}\cap H^\gamma(\T^2)$ with $\gamma<\alpha-1$ and for all $T>0$, $1\leq p<\infty$, we have that there exists a unique solution $u\in C([0,T];\mathbf{H}\cap H^\gamma(\T^2))$ of \eqref{eq:ns} with $u(0,\cdot)=u^\mathrm{in}$. Moreover, the process is $\mathcal{F}_t$-adapted with $u\in L^p(\Omega;C([0,T];\mathbf{H}))\cap L^2(\Omega;L^2([0,T];H^{\gamma+1}(\T^2)))$. Additionally, for all $p\in[1,\infty)$ and for all $\delta>0$ such that $\gamma+\delta<\alpha-1$, the following estimates hold
\begin{align}
\mathbb{E}\left[\sup_{t\in[0,T]}\|u(t,\cdot)\|_{H^\gamma}^p\right]&\leq C(T,p,\gamma)(1+\|u^\mathrm{in}\|_{\mathbf{H}\cap H^\gamma}^p),\\
\mathbb{E}\left[\int_0^T\|u(t,\cdot)\|_{H^{\gamma+1}}^2\de t\right]&\leq C(T,\delta) (1+\|u^\mathrm{in}\|_{H^\gamma}).
\end{align}
\end{prop}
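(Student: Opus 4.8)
The plan is to use the classical additive-noise reduction for semilinear SPDEs: subtract the stochastic convolution to turn \eqref{eq:ns} into a pathwise-deterministic (random coefficient) Navier--Stokes system, prove well-posedness and energy estimates for the latter $\omega$-by-$\omega$, and then recover the moment bounds by feeding in the moments of the convolution. Let $\Pi$ denote the Leray projection and $A:=\Pi\Delta$ the Stokes operator on $\mathbf{L}^2$, diagonalized by $\{e_k\}$ with eigenvalues $-|k|^2$. The first step is to introduce the Ornstein--Uhlenbeck process $z(t):=\int_0^t e^{(t-s)A}Q\,\de W_s$, solving $\partial_t z=\Delta z-\nabla\pi+Q\dot W_t$. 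Since $Qe_k=q_k e_k$, the It\^o isometry yields explicit moment formulas; the decisive computation is
$$
\E\int_0^T \|z(t)\|_{H^{\gamma+1}}^2\,\de t
= \int_0^T\sum_{k\in\Z^2_*}|k|^{2(\gamma+1)}q_k^2\,\frac{1-e^{-2t|k|^2}}{2|k|^2}\,\de t
\lesssim T\sum_{k\in\Z^2_*}|k|^{2\gamma}q_k^2,
$$
which converges \emph{precisely} when $2\gamma-2\alpha<-2$, i.e. $\gamma<\alpha-1$, by the coloring assumption \eqref{coloring assumption}. The same summability, together with Gaussian hypercontractivity and the factorization method, gives that all $p$-moments of $\sup_{t\in[0,T]}\|z(t)\|_{\mathbf{H}\cap H^\gamma}$ are finite and that $z\in L^p(\Omega;C([0,T];\mathbf{H}\cap H^\gamma))\cap L^2(\Omega;L^2([0,T];H^{\gamma+1}))$ with $z$ being $\mathcal{F}_t$-adapted.

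Next I would set $v:=u-z$, which solves, for $\p$-a.e.\ fixed $\omega$, the random forced equation
$$
\partial_t v=\Delta v-\Pi\big[((v+z)\cdot\nabla)(v+z)\big],\qquad v(0,\cdot)=u^\mathrm{in},
$$
where $z=z(\omega)$ is a given datum in $C([0,T];\mathbf{H}\cap H^\gamma)\cap L^2([0,T];H^{\gamma+1})$. Because $\mathbf{H}\hookrightarrow C^3$, the field $z$ is a bounded smooth drift, so this is a deterministic 2D Navier--Stokes system with smooth advective perturbations. Global existence and uniqueness follow by the standard Galerkin--compactness scheme combined with the classical two-dimensional uniqueness argument (the difference of two solutions satisfies a linear energy inequality closed by Gr\"onwall).

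The quantitative bounds come from pathwise energy estimates. The $L^2$ balance uses $\langle (w\cdot\nabla)\phi,\phi\rangle=0$ for divergence-free $w$, so only the terms containing $z$ survive and are controlled by $\|z\|_{C^1}$ and the dissipation $\|\nabla v\|_{L^2}^2$. For the $H^\gamma$ estimate I would apply $\langle D\rangle^\gamma$, invoke the Kato--Ponce and fractional Leibniz estimates (Lemmas \ref{lem:kato ponce} and \ref{lem:fractional}), and use Young's inequality to absorb the top-order term $\eta$-free dissipation $\|v\|_{H^{\gamma+1}}^2$ into the left-hand side; Gr\"onwall then produces a pathwise bound of the form
$$
\sup_{t\in[0,T]}\|v(t,\cdot)\|_{H^\gamma}^2+\int_0^T\|v(t,\cdot)\|_{H^{\gamma+1}}^2\,\de t
\leq \Psi\Big(\|u^\mathrm{in}\|_{H^\gamma},\,\sup_{t\le T}\|z\|_{\mathbf{H}\cap H^\gamma},\,\textstyle\int_0^T\|z\|_{H^{\gamma+1}}^2\,\de t\Big),
$$
with $\Psi$ locally bounded. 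Writing $u=v+z$, taking expectations and inserting the convolution moment bounds from the first step yields the two stated inequalities for $\E[\sup_t\|u\|_{H^\gamma}^p]$ and $\E[\int_0^T\|u\|_{H^{\gamma+1}}^2\,\de t]$. Finally, adaptedness of $u$ follows since $z$ is adapted and the solution map $z\mapsto v$ is pathwise continuous, hence measurable.

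The main obstacle is the sharp regularity theory for the stochastic convolution, namely establishing $z\in L^2(\Omega;L^2([0,T];H^{\gamma+1}))$ throughout the full range $\gamma<\alpha-1$: this is exactly where the non-degeneracy/regularity hypothesis $\alpha>\tfrac{5d}{2}$ (through $|q_k|\approx|k|^{-\alpha}$) enters and fixes the admissible regularity threshold. Once the convolution estimates are in place, the remaining analysis is the standard deterministic 2D Navier--Stokes machinery applied pathwise, so it presents no essential difficulty.
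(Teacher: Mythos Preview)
The paper does not give its own proof of this proposition: it is quoted verbatim as a known result from \cite{Kuksin book, BBP}. What the paper \emph{does} prove is the refinement in Proposition~\ref{propmu1}, where the constants are tracked explicitly in~$T$, and that argument proceeds by a \emph{different} route from yours: It\^o's formula is applied directly to $\|u\|_{H^\ell}^2$, the nonlinear term is handled by interpolation, a stopping time $\tau_R$ localizes the norms, and the stochastic integral is controlled by Burkholder--Davis--Gundy; one then passes to the limit $R\to\infty$ by monotone convergence. No Ornstein--Uhlenbeck splitting appears.

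Your decomposition $u=v+z$ is the other classical approach and is a legitimate way to obtain the statement. One point, however, is genuinely glossed over and is exactly where the two methods diverge. Your pathwise Gr\"onwall inequality for $v$ produces a multiplicative factor of the form $\exp\big(C\!\int_0^T\!\|z\|_{W^{1,\infty}}\,\de t\big)$ (already at the $L^2$ level, and with higher norms of $z$ at the $H^\gamma$ level). To pass from the pathwise bound to $\E[\sup_t\|u\|_{H^\gamma}^p]$ you must therefore control expectations of \emph{exponentials} of functionals of the Gaussian process $z$; ``$\Psi$ locally bounded'' is not enough. This is doable here --- the exponent is linear in $\|z\|$, so Fernique's theorem gives all such exponential moments --- but you should state this explicitly rather than hide it in ``taking expectations''. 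The direct It\^o/BDG approach used in the paper for Proposition~\ref{propmu1} sidesteps this issue entirely, since the martingale part is estimated without ever exponentiating a random quantity; that is the practical advantage of the paper's method when one wants clean polynomial-in-$T$ constants. Conversely, your approach has the virtue of reducing everything to deterministic 2D Navier--Stokes theory once the convolution estimates are in hand.
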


Adapting the argument of \cite[Proposition 2.4.12]{Kuksin book}, we prove the energy estimates in Proposition \ref{propmu}, with particular emphasis on the dependence of the constants on $T$ as this will play a crucial role in the subsequent analysis. We have the following.
\begin{prop}\label{propmu1}
Let $u$ be the unique solution of the equations \eqref{eq:ns} provided by Proposition \ref{propmu} and denote by 
\begin{equation}\label{def:Cgamma}
C_\ell=\sum_{k\in\Z^2_*}|k|^{2\ell}q_k^2,
\end{equation}
a positive constant which is finite as long as $\ell<\alpha-1$. Then, for all $\ell<\alpha-1$ we have that
\begin{equation}
\mathbb{E}\left[\sup_{t\in[0,T]}\|u(t,\cdot)\|_{H^\ell}^2+\int_0^T\|u(t)\|_{H^{\ell+1}}^2\de t\right]\leq C T^{2\ell+\frac32},
\end{equation}
for some constant $C$ depending on $\ell,C_0, C_\ell, \|u^\mathrm{in}\|_{H^\ell}$.
\end{prop}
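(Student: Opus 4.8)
The plan is to apply It\^o's formula to the functional $t\mapsto\|u(t)\|_{H^\ell}^2$ and to run an induction on the (integer part of the) regularity index $\ell$, tracking at each step the exact power of $T$. Since $u^\mathrm{in}$ and the noise $Q\dot W_t$ are mean-free and divergence-free, the solution of \eqref{eq:ns} stays mean-free, so $\|\cdot\|_{H^\ell}$ and $\|\cdot\|_{\dot H^\ell}$ are equivalent and one may work with the homogeneous norm $\|D^\ell u\|_{L^2}$. It\^o's formula then yields, for each $t$,
\begin{align*}
\|u(t)\|_{\dot H^\ell}^2 + 2\int_0^t\|u(s)\|_{\dot H^{\ell+1}}^2\,\de s
&= \|u^\mathrm{in}\|_{\dot H^\ell}^2 + C_\ell\,t - 2\int_0^t\langle D^\ell u, D^\ell(u\cdot\nabla)u\rangle\,\de s + M_t^\ell,
\end{align*}
where the viscous term produces the dissipation $\|u\|_{\dot H^{\ell+1}}^2$, the pressure gradient drops by incompressibility (it is orthogonal to the divergence-free field $D^\ell u$), the quadratic-variation correction equals exactly $\sum_k q_k^2|k|^{2\ell}=C_\ell$ by \eqref{def:Cgamma} since $D^\ell e_k=|k|^\ell e_k$, and $M_t^\ell:=2\int_0^t\langle D^\ell u, D^\ell Q\,\de W_s\rangle$ is a martingale.

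For the base case $\ell=0$ the nonlinear term vanishes by the divergence-free condition, $\langle u,(u\cdot\nabla)u\rangle=0$ (and, by the same two-dimensional cancellation in vorticity form, the enstrophy level $\ell=1$ is equally clean). Taking the supremum in $t$ and then expectations, the only genuinely stochastic contribution is $\mathbb{E}\sup_{t\le T}|M_t^0|$, which I bound by the Burkholder--Davis--Gundy inequality: since $\de[M^0]_t\lesssim\|u\|_{L^2}^2\,\de t$ (using $q_k^2|k|^{2\ell}\lesssim 1$, guaranteed by \eqref{coloring assumption} and $\ell<\alpha-1$), one obtains $\mathbb{E}\sup_{t\le T}|M_t^0|\lesssim T^{1/2}(\mathbb{E}\sup_{t\le T}\|u\|_{L^2}^2)^{1/2}$, and Young's inequality absorbs half of $\mathbb{E}\sup\|u\|_{L^2}^2$ into the left-hand side. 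This closes the estimate and gives $\mathbb{E}[\sup_{t\le T}\|u\|_{L^2}^2+\int_0^T\|u\|_{\dot H^1}^2\,\de t]\lesssim T$, which is consistent with (indeed stronger than) the claimed $T^{3/2}$; it is precisely this BDG step that produces the non-integer exponent $3/2$.

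For the inductive step ($\ell\ge1$ integer) I write $(u\cdot\nabla)u=\dive(u\otimes u)$, integrate by parts once to get $|\langle D^\ell u, D^\ell(u\cdot\nabla)u\rangle|\le \|u\|_{\dot H^{\ell+1}}\,\|u\otimes u\|_{\dot H^\ell}$, and then invoke the fractional Leibniz rule of Lemma \ref{lem:fractional} to bound $\|u\otimes u\|_{\dot H^\ell}\lesssim\|u\|_{L^\infty}\|u\|_{\dot H^\ell}$; Young's inequality splits this as $\tfrac12\|u\|_{\dot H^{\ell+1}}^2+C\|u\|_{L^\infty}^2\|u\|_{\dot H^\ell}^2$, the first summand being absorbed by the dissipation on the left. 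After integrating in time and taking the supremum, the decisive term is $\mathbb{E}\big[\sup_{t\le T}\|u\|_{\dot H^\ell}^2\int_0^T\|u\|_{L^\infty}^2\,\de s\big]$, which I decouple by Cauchy--Schwarz and control through a two-dimensional Gagliardo--Nirenberg inequality bounding $\|u\|_{L^\infty}$ by the lower-order norms $\|u\|_{\dot H^1},\|u\|_{\dot H^2}$, together with the lower-level moment bounds supplied by Proposition \ref{propmu} and the induction hypothesis; the supremum martingale $M^\ell$ is again handled by BDG. The bookkeeping is arranged so that the time-integrations and the Cauchy--Schwarz/H\"older steps contribute powers of $T$ summing to $2\ell+3/2$ (crudely, a factor $T^2$ per level on top of the base $T^{3/2}$). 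The non-integer range of $\ell$ then follows by interpolating the moment bounds between two consecutive integers, which preserves the affine exponent $2\ell+3/2$ by convexity.

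The hard part will be the treatment of the nonlinear term at higher regularity: a direct Gr\"onwall argument would produce an exponential factor $\exp(c\int_0^T\|u\|_{L^\infty}^2)$ incompatible with polynomial-in-$T$ control, so one must instead absorb the top-order part into the dissipation and estimate the remainder purely through lower-level norms, whose moments are polynomially bounded in $T$ by Proposition \ref{propmu}. Making this decoupling rigorous forces the induction to be carried out for all $L^p(\Omega)$-moments at the lower levels, not merely the second; and the genuinely delicate point is the precise accounting of the powers of $T$ across the Cauchy--Schwarz/H\"older and BDG estimates, so that the exponents add up to exactly $2\ell+\tfrac32$ rather than something larger.
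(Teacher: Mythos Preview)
Your overall architecture (It\^o on $\|u\|_{H^\ell}^2$, BDG for the martingale, induction in $\ell$) matches the paper's, and your base cases $\ell=0,1$ are fine. The genuine gap is in the inductive step: the fractional Leibniz estimate you propose,
\[
|\langle D^\ell u, D^\ell(u\cdot\nabla)u\rangle|\le \|u\|_{\dot H^{\ell+1}}\|u\otimes u\|_{\dot H^\ell}\lesssim \|u\|_{\dot H^{\ell+1}}\|u\|_{L^\infty}\|u\|_{\dot H^\ell},
\]
leaves the top-order factor $\|u\|_{\dot H^\ell}$ in the remainder after Young. The resulting error term $C\int_0^T\|u\|_{L^\infty}^2\|u\|_{\dot H^\ell}^2\,\de s$ is bounded above by $(\sup_{t\le T}\|u\|_{\dot H^\ell}^2)\int_0^T\|u\|_{L^\infty}^2$, and since $\int_0^T\|u\|_{L^\infty}^2$ is large (not small) this cannot be absorbed into the left-hand side; decoupling by Cauchy--Schwarz in $\Omega$ only trades it for a higher moment of $\sup_{t}\|u\|_{\dot H^\ell}^2$ at the \emph{same} level $\ell$, so the circularity persists. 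In other words, your diagnosis in the last paragraph is correct, but the tool you name does not deliver the decoupling.

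The paper breaks this circularity with a different, sharper interpolation for the trilinear term,
\[
|\langle A^{\ell}u,\,B(u,u)\rangle|\;\le\;\|u\|_{H^{\ell+1}}^{\frac{4\ell-1}{2\ell}}\,\|u\|_{H^1}^{\frac{\ell+1}{2\ell}}\,\|u\|_{L^2}^{1/2},
\]
so that after Young the remainder is $C(\ell)\,\|u\|_{H^1}^{2(\ell+1)}\|u\|_{L^2}^{2\ell}$, involving \emph{only} the $L^2$ and $H^1$ norms and nothing at level $\ell$. The required high moments of these low norms are then supplied not by Proposition~\ref{propmu} but by the energy-functional moment bound $\mathbb{E}\big[\sup_{t\le T}\mathcal{E}_u(t)^m\big]\lesssim_m T^m$ of Kuksin--Shirikyan (Corollary~2.4.11), which is what produces the exact exponent $T^{2\ell+3/2}$ after one Cauchy--Schwarz and one H\"older in $t$. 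Replacing your Leibniz step by this interpolation, and invoking that moment bound rather than Proposition~\ref{propmu}, closes the argument.
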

\begin{proof}

Let $u$ be the solution of \eqref{eq:ns}, which we rewrite as
$$
\begin{cases}
\de u + \big( A u + B(u,u)\big)\,\de t = Q\,\de W_t,\\
u(0)=u^\mathrm{in},
\end{cases}
$$
where $A=-\mathbb{P}\Delta$ is the Stokes operator and $B(u,u)=\mathbb{P}(u\cdot\nabla u)$. 
We start by proving the $L^2$ and $H^1$ estimates, for which we a linear growth in time.\\
\\
\underline{\bf $L^2$ estimate}\\
\\
We apply the Itô's formula to $f(u)=\|u\|_{L^2}$ and we get that
\begin{equation}\label{ito l2}
\de \|u\|_{L^2}^2 + 2\|\nabla u(t,\cdot)\|_{L^2}^2\,\de t =C_0 \de t + 2\langle u(t,\cdot),\,Q\,\de W_t\rangle.
\end{equation}
We now define the stopping time
\begin{equation}
\tau_R:=\inf\{t\geq 0: \|u(t,\cdot)\|_{L^2}\geq R\}.
\end{equation}
We integrate \eqref{dopo ito 2} with respect to time, we take the supremum over $[0,T\wedge\tau_R]$ and we get that
\begin{align*}
\sup_{0\leq t\leq T\wedge\tau_R}\|u(t,\cdot)\|_{L^2}^2+2\int_0^{T\wedge\tau_R}\|\nabla u(s,\cdot)\|_{L^2}^2\de s&\leq \|u^\mathrm{in}\|_{L^2}^2+C_0 (T\wedge \tau_R)\\
&+2\sup_{0\leq t\leq T\wedge\tau_R}\int_0^{t}\langle u(s,\cdot),\,Q\,\de W_s\rangle
\end{align*}
We now estimate the martingale term
\begin{equation}
M_{T\wedge\tau_R}:=\sup_{0\leq t\leq T\wedge\tau_R}\int_0^{t}\langle u(s,\cdot),\,Q\,\de W_s\rangle=\sup_{0\leq t\leq T\wedge\tau_R} Z_t.
\end{equation}
Since the quadratic variation $\langle Z\rangle_t$ can be bounded by
\begin{equation}
\langle Z\rangle_t=\int_0^t\sum_k\langle u(s,\cdot),Qe_k\rangle^2 \de s\leq C_0\int_0^t\|u(s,\cdot)\|_{L^2}^2\de s,
\end{equation}
by the Burkholder-Davis-Gundy inequality, H\"older and Young inequality we have that
\begin{align*}
\E\left[2\sup_{0\leq t\leq T\wedge\tau_R}\int_0^{t}\langle u(t,\cdot),\,Q\,\de W_t\rangle|\right]&\leq C\E\left[\left(C_0\int_0^{T\wedge\tau_R}\|u(s,\cdot)\|_{L^2}^2\de s\right)^\frac12\right]\\
&\leq C\sqrt{C_0 T}\, \E\left[\sup_{0\leq t\leq T\wedge\tau_R}\|u(t,\cdot)\|_{L^2}^2\right]^\frac12\\
&\leq C C_0 T+\frac12 \E\left[\sup_{0\leq t\leq T\wedge\tau_R}\|u(t,\cdot)\|_{L^2}^2\right].
\end{align*}
Then, we take the expectation in \eqref{stop 1} and we use the bound above to get that
\begin{equation}
\E\left[\sup_{0\leq t\leq T\wedge\tau_R}\|u(t,\cdot)\|_{L^2}^2\right]+4\E\left[\int_0^{T\wedge\tau_R}\|\nabla u(s,\cdot)\|_{L^2}^2\de s\right]\leq 2\|u^\mathrm{in}\|_{L^2}^2+2C_0 T.
\end{equation}
Since the right hand side does not depend on $R$, by monotone convergence theorem we get that
\begin{equation}\label{fine step 0}
\E\left[\sup_{0\leq t\leq T}\|u(t,\cdot)\|_{L^2}^2\right]+\E\left[\int_0^{T}\|\nabla u(s,\cdot)\|_{L^2}^2\de s\right]\leq 2(\|u^\mathrm{in}\|_{L^2}^2+C_0 T).
\end{equation}
\\
\underline{\bf The $H^1$ bound}\\
\\
We now apply Itô's formula to $f(u)=\|u\|_{H^1}^2=\|A^{1/2}u\|_{L^2}^2=\langle Au,u\rangle$:
\begin{equation}\label{dopo ito}\de \|u\|_{H^1}^2 + 2\|\nabla u(t)\|_{H^1}^2\,\de t =- 2\langle \Delta u(t),\,B(u(t),u(t))\rangle\,\de t+ C_1 \de t + 2\langle \Delta u(t),\,Q\,\de W_t\rangle.
\end{equation}
Note that, also in this case, there is no contribution of the nonlinear term thanks to the $2D$ structure: indeed, if we define the stream function $\psi$ of $u$, we have that
$$
u=\curl \psi,\qquad \curl u=\Delta \psi,\qquad \curl\left(u\cdot\nabla u\right)=u\cdot\nabla \curl u,
$$
and
\begin{align*}
\langle \Delta u(t),\,B(u(t),u(t))\rangle&=\int_{\T^2}u\cdot\nabla u \,\Delta u\,\de x=\int_{\T^2}u\cdot\nabla u \,\Delta \curl\psi\,\de x\\
&=\int_{\T^2}u\cdot\nabla \curl u \,\Delta \psi\,\de x=\int_{\T^2}u\cdot\nabla |\Delta \psi|^2\,\de x=0
\end{align*}
We define the stopping time
\begin{equation}
\tau_R:=\inf\{t\geq 0: \|u(t,\cdot)\|_{H^1}\geq R\},
\end{equation}
we integrate in time, we take the supremum $[0,T\wedge\tau_R]$ to obtain that 
\begin{align*}
\sup_{0\leq t\leq T\wedge\tau_R}\|u(t,\cdot)\|_{H^1}^2+2\int_0^{T\wedge\tau_R}\|\nabla u(s,\cdot)\|_{H^1}^2\de s&\leq \|u^\mathrm{in}\|_{H^1}^2+C_1 T\\
&+2\sup_{0\leq t\leq T\wedge\tau_R}\int_0^{t}\langle \Delta u(s,\cdot),\,Q\,\de W_s\rangle.
\end{align*}
We now deal with the martingale term: by proceeding similarly to the $L^2$ case, we use \eqref{smoothing effect Q} and \eqref{fine step 0} to obtain that
\begin{align*}
\E\left[2\sup_{0\leq t\leq T\wedge\tau_R}\int_0^{t}|\langle \Delta u(s,\cdot),\,Q\,\de W_s\rangle|\right]&\leq C\E\left[\left(C_1\int_0^{T\wedge\tau_R}\| u(s,\cdot)\|_{H^1}^2\de s\right)^\frac12\right]\\
&\leq C\sqrt{C_1T}\, \E\left[\sup_{0\leq s\leq T\wedge\tau_R}\|u(s,\cdot)\|_{H^1}^2\right]^\frac12\\
&\leq CC_1T+\frac12\E\left[\sup_{0\leq s\leq T\wedge\tau_R}\|u(s,\cdot)\|_{H^1}^2\right].
\end{align*}
So we have obtained that
\begin{align}
\E\left[\sup_{0\leq t\leq T\wedge\tau_R}\| u(t,\cdot)\|_{H^1}^2\right]+\E\left[\int_0^{T\wedge\tau_R}\|\nabla u(s,\cdot)\|_{H^1}^2\de s\right]&\leq 2\|u^\mathrm{in}\|_{H^1}^2+2C C_1T,
\end{align}
and the conclusion follows applying the monotone convergence theorem.\\
\\
\underline{\bf Higher Sobolev norms}\\
\\
We now assume that $\ell\geq 2$ and the estimate holds for $\ell-1$. Differently from the previous case, we have to handle the contribution of the nonlinear term in the estimates. First, we apply Itô's formula to $f(u)=\|u\|_{H^\ell}^2=\|A^{\ell/2}u\|_{L^2}^2=\langle A^\ell u, u\rangle$: 
\begin{equation}\label{dopo ito}
\de \|u\|_{H^\ell}^2 + 2\|u(t)\|_{H^{\ell+1}}^2\,\de t 
=- 2\langle A^{\ell}u(t),\,B(u(t),u(t))\rangle\,\de t
+ C_\ell \de t + 2\langle A^{\ell}u(t),\,Q\,\de W_t\rangle.
\end{equation}
where we used \eqref{def:Cgamma}. In order to estimate the nonlinear term, we use the following interpolation estimate
\begin{align*}
|\langle A^{\ell}u(t),\,B(u(t),u(t))\rangle|&\leq \|u(t)\|_{H^{\ell+1}}^{\frac{4\ell-1}{2\ell}}\|u(t)\|_{H^1}^{\frac{\ell+1}{2\ell}}\|u(t)\|_{L^2}^\frac12\\
&\leq \|u(t)\|_{H^{\ell+1}}^2+C(\ell)\|u(t)\|_{H^1}^{2(\ell+1)}\|u(t)\|_{L^2}^{2\ell}.
\end{align*}
Thus, we can rewrite \eqref{dopo ito} as
\begin{equation}\label{dopo ito 2}
\de \|u\|_{H^\ell}^2 + \|u(t)\|_{H^{\ell+1}}^2\,\de t \leq C(\ell)\|u(t)\|_{H^1}^{2(\ell+1)}\|u(t)\|_{L^2}^{2\ell}\,\de t + C_\ell \de t + 2\langle A^{\ell}u(t),\,Q\,\de W_t\rangle.
\end{equation}
We now define the stopping time
\begin{equation}
\tau_R:=\inf\{t\geq 0: \|u(t)\|_{H^\ell}\geq R\}.
\end{equation}
We integrate \eqref{dopo ito 2} in time, we take the supremum over the interval $[0,T\wedge\tau_R]$, and we get that
\begin{align}
\sup_{0\leq t\leq T\wedge\tau_R}\|u(t,\cdot)\|_{H^\ell}^2+\int_0^{T\wedge\tau_R}\|u(s,\cdot)\|_{H^{\ell+1}}^2\de s&\leq \|u^\mathrm{in}\|_{H^\ell}^2+C_\ell T\nonumber\\
&+C(\ell)\int_0^{T\wedge\tau_R} \|u(s,\cdot)\|_{H^1}^{2(\ell+1)}\|u(s,\cdot)\|_{L^2}^{2\ell}\de s\nonumber\\
&+2\sup_{0\leq t\leq T\wedge\tau_R}\int_0^{t}\langle A^{\ell}u(s,\cdot),\,Q\,\de W_s\rangle\label{stop 1}.
\end{align}
We now estimate the martingale term
\begin{equation}
M_T:=\sup_{0\leq t\leq T\wedge\tau_R}\int_0^{t}\langle A^{\ell}u(s,\cdot),\,Q\,\de W_s\rangle=\sup_{0\leq t\leq T\wedge\tau_R} Z_t.
\end{equation}
Since the quadratic variation $\langle Z\rangle_t$ can be bounded as
\begin{equation}
\langle Z\rangle_t=\int_0^t\sum_k\langle A^{\ell}u(s,\cdot),Qe_k\rangle^2 \de s\leq C_\ell\int_0^t\|u(s,\cdot)\|_{H^\ell}^2\de s,
\end{equation}
by the Burkholder-Davis-Gundy inequality, H\"older and Young inequality we have that
\begin{align*}
\E\left[2\sup_{0\leq t\leq T\wedge\tau_R}\int_0^{t}|\langle A^{\ell}u(s,\cdot),\,Q\,\de W_s\rangle|\right]&\leq C\E\left[\left(C_\ell\int_0^{T\wedge\tau_R}\|u(s,\cdot)\|_{H^\ell}^2\de s\right)^\frac12\right]\\
&\leq C\sqrt{C_\ell T}\, \E\left[\sup_{0\leq t\leq T\wedge\tau_R}\|u(t,\cdot)\|_{H^\ell}^2\right]^\frac12\\
&\leq C C_\ell T+\frac12 \sup_{0\leq t\leq T\wedge\tau_R}\|u(t,\cdot)\|_{H^\ell}^2.
\end{align*}
Thus, we take the expectation in \eqref{stop 1} and we use the bound above to get that
\begin{align}
\E\left[\sup_{0\leq t\leq T\wedge\tau_R}\|u(t,\cdot)\|_{H^\ell}^2\right]+4\E\left[\int_0^{T\wedge\tau_R}\|u(s,\cdot)\|_{H^{\ell+1}}^2\de s\right]&\leq 2\|u^\mathrm{in}\|_{H^\ell}^2+2C_\ell T\nonumber\\
+2\E\left[\int_0^{T\wedge\tau_R}\right.&\left. \|u(s,\cdot)\|_{H^1}^{2(\ell+1)}\|u(s,\cdot)\|_{L^2}^{2\ell}\de s\right].\label{prima del nonlineare}
\end{align}
In order to close the estimate, we have to deal with the last term on the right hand side above. We need the following momentum estimates, see \cite[Corollary 2.4.11]{Kuksin book}.
\begin{cor}
Define 
$$
\mathcal{E}_u(t)=\|u(t)\|_{L^2}^2+\int_0^t\|\nabla u(s)\|_{L^2}^2\de s.
$$
assume that the random variable $u^\mathrm{in}$ satisfies the condition
$$
\E\|u^\mathrm{in}\|_{L^2}^{2m}<\infty,
$$
where $m\geq 1$ is an integer. Then there is a constant $C(m)>0$ depending only on $m$ such that for any $T\geq 1$ we have
\begin{equation}
\E\left[\sup_{0\leq t\leq T}\mathcal{E}_u(t)^m\right]\leq C(m)\left(\E\|u^\mathrm{in}\|_{L^2}^{2m}+ T^m + \gamma^{-m}\right),
\end{equation}
where $\gamma>0$ is defined as $\gamma=\frac14 (\sup_k q_k^2)^{-1}$.
\end{cor}
Since we are considering deterministic initial datum, we can apply the above estimates with $m=2\ell$ and $m=2(\ell+1)$ to obtain 
\begin{align}
\E\left[\int_0^{T\wedge\tau_R} \|u(s,\cdot)\|_{H^1}^{2(\ell+1)}\|u(s,\cdot)\|_{L^2}^{2\ell}\de s\right]&\leq \E\left[\sup_{0\leq t\leq T}\|u(t,\cdot)\|_{L^2}^{2\ell}\int_0^{T}\|u(s,\cdot)\|_{H^1}^{2(\ell+1)}\de s\right]\nonumber\\
&\leq \E\left[\sup_{0\leq t\leq T}\|u(t,\cdot)\|_{L^2}^{4\ell}\right]^\frac12\E\left[\left(\int_0^{T} \|u(s,\cdot)\|_{H^1}^{2(\ell+1)}\de s\right)^2\right]^\frac12\nonumber\\
&\leq \sqrt{T}\,\E\left[\sup_{0\leq t\leq T}\|u(t,\cdot)\|_{L^2}^{4\ell}\right]^\frac12\E\left[\int_0^{t} \|u(s,\cdot)\|_{H^1}^{4(\ell+1)}\de s\right]^\frac12\nonumber\\
&\leq C(\ell, C_0) T^{2\ell+\frac32}.\label{fine non lineare}
\end{align}
We put \eqref{fine non lineare} into \eqref{prima del nonlineare} and using monotone convergence theorem, we finally get
\begin{equation}
\E\left[\sup_{0\leq t\leq T}\|u(t,\cdot)\|_{H^\ell}^2\right]+\E\left[\int_0^{T}\|u(s,\cdot)\|_{H^{\ell+1}}^2\de s\right]\leq 2\|u^\mathrm{in}\|_{H^\ell}^2+2C_\ell T+2C(\ell, C_0) T^{2\ell+\frac32},
\end{equation}
and this concludes the proof.
\end{proof}

We recall the following theorem, see \cite[Theorem 1.3]{BBP2}.
\begin{thm}\label{teo:bedrossian}
For all $s\geq 0$ and all $p\in[1,\infty)$, there exists a deterministic $\hat{\gamma} =\hat{\gamma}(s,p)$ (depending only on $s,p$ and the parameters $Q,\nu$) which satisfies the following properties. For all $\eta\in [0,1]$, and for all $u^\mathrm{in}\in\mathbf{H}$
there is a $\mathbb{P}$-a.s. finite random constant $D_\eta(\omega,u^\mathrm{in}):\Omega\times\mathbf{H}\mapsto[1,\infty)$ (also depending
on $p,s$) such that the solution to \eqref{eq:ad} with $U$ being the solution of \eqref{eq:ns} with initial data $u^\mathrm{in}$, satisfies for all $\rho^\mathrm{in}\in H^s(\T^2)$ (mean-zero),
\begin{equation}\label{eq:exp mix}
\|\rho(t,\cdot)\|_{H^{-s}}\leq D_\eta(\omega,u^\mathrm{in})e^{\hat{\gamma}t}\|\rho^\mathrm{in}\|_{H^s},
\end{equation}
where $D_\eta(\omega,u^\mathrm{in})$ satisfies the following $\eta$-independent bound: there exists a $\beta\geq 2$ (independent of $u^\mathrm{in}, p, s$) such that for all $\kappa>0$,
\begin{equation}\label{aggiusto 2}
\mathbb{E}[D_\eta(\cdot,\rho^\mathrm{in})^p]\lesssim_{\kappa,p}(1+\|u^\mathrm{in}\|_{\mathbf{H}})^{p\beta}\exp\left(\kappa\|\curl u^\mathrm{in}\|_{L^2}\right)^2.
\end{equation}
\end{thm}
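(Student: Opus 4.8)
The statement is the quenched, uniform-in-diffusivity exponential mixing theorem of \cite{BBP2}; since it is quoted rather than reproven here, I only outline the strategy one would follow to establish it. The plan is to study the Lagrangian flow of $U$ as a random dynamical system, to show that the passive scalar mixes at a rate governed by a strictly \emph{positive} top Lyapunov exponent of the tangent dynamics, and then to treat the diffusion $\eta\Delta$ as a perturbation that never spoils, and in fact only helps, this rate. The decay exponent $\hat\gamma<0$ will be the deterministic mixing rate, and the random constant $D_\eta$ will collect finite-time fluctuations together with norms of $u$.

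First I would set up the Markov framework. Let $\phi_t$ denote the stochastic flow of $U$, let $D\phi_t$ be its linearization, and consider the projective process $(u_t,x_t,w_t)$ on $\mathbf{H}\times\T^2\times S^1$, where $x_t=\phi_t(x_0)$ and $w_t=D\phi_t(x_0)v/|D\phi_t(x_0)v|$. Because \eqref{eq:ns} admits a unique stationary measure under the non-degeneracy \eqref{coloring assumption} (ergodicity of the $2$D stochastic Navier--Stokes equations), one first proves \emph{Lagrangian chaos}: the top Lyapunov exponent $\lambda_1=\lim_t \frac1t\log|D\phi_t v|$ is a strictly positive deterministic constant. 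This rests on Furstenberg's criterion, which reduces to showing that the projective process has a unique stationary measure not concentrated on finitely many directions; the verification proceeds through a H\"ormander bracket-generating condition for the forced tangent system, and is the deepest qualitative ingredient. Next I would upgrade this to a quantitative spectral gap: using a Harris-type theorem (a Foster--Lyapunov drift for $\|u_t\|_{\mathbf{H}}$ together with a minorization on a sublevel set of the drift) one obtains geometric ergodicity of $(u_t,x_t,w_t)$ in a weighted norm, hence exponential decay of correlations and, by a subadditive large-deviation argument, exponential concentration of the finite-time exponents $\frac1t\log|D\phi_t v|$ around $\lambda_1$.

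The mixing estimate then follows by the ``mixing from chaos'' mechanism. Using the duality characterization of the negative Sobolev norm and the transport identity $\langle \rho^0(t),\psi\rangle=\langle \rho^\mathrm{in},\psi\circ\phi_t\rangle$ for the inviscid scalar $\rho^0$ solving \eqref{eq:te-intro}, the exponential stretching produced by $\lambda_1>0$ forces $\psi\circ\phi_t$ into high frequencies, driving $\|\rho^0(t)\|_{H^{-s}}$ to zero at the deterministic rate $\hat\gamma<0$, first in $L^p(\Omega)$ (where the mild $p$-dependence of $\hat\gamma$ originates) and then almost surely with a random prefactor via a Borel--Cantelli argument. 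For the \emph{uniform-in-$\eta$} transfer to \eqref{eq:exp mix} I would show that exponential mixing of the inviscid flow propagates to the advection-diffusion equation for all $\eta\in[0,1]$ with the same rate: heuristically the advected scalar is mixed to ever finer scales until diffusion takes over below the dissipation scale, and one verifies that the $H^{-s}$ decay is insensitive to $\eta\le 1$. A convenient device is the stochastic-characteristics representation, perturbing $\phi_t$ by an independent Brownian motion of amplitude $\sqrt{2\eta}$, whose added noise can only enhance the stretching and folding.

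Finally, the moment bound \eqref{aggiusto 2} follows because $D_\eta$ is built from the finite-time Lyapunov fluctuations and from norms of $u$; the exponential moment in $\|\curl u^\mathrm{in}\|_{L^2}$ comes from the enstrophy balance and exponential-martingale estimates for \eqref{eq:ns} (in the spirit of, but stronger than, the a priori bounds in Proposition \ref{propmu1}), which are manifestly $\eta$-independent since $\eta$ does not enter \eqref{eq:ns}. I expect the two genuinely hard points to be the positive-exponent step (Lagrangian chaos, via H\"ormander and Furstenberg) and the uniform-in-$\eta$ transfer, this last being the feature that distinguishes \cite{BBP2} from classical passive-scalar mixing results and the reason the rate $\hat\gamma$ can be taken independent of the resistivity.
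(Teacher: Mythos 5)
The paper gives no proof of this statement: it is quoted directly from Bedrossian--Blumenthal--Punshon-Smith \cite{BBP2} (with the uniform-in-diffusivity mechanism coming from \cite{BBP3}), exactly as you observe at the outset. Your outline faithfully reproduces the strategy of those cited works---Lagrangian chaos via Furstenberg/H\"ormander, a Harris-type spectral gap with Lyapunov drift (in fact the $H^{-s}$ decay is extracted from geometric ergodicity of the \emph{two-point} process, not the projective process alone, a minor conflation in your sketch), the uniform-in-$\eta$ transfer via stochastic characteristics perturbed by a Brownian motion of amplitude $\sqrt{2\eta}$, and exponential-martingale enstrophy bounds for \eqref{eq:ns} giving the $\eta$-independent moment estimate \eqref{aggiusto 2}---so your proposal matches the (cited) proof in all essentials.
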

In \cite[Theorem 1.4]{BBP3} it was proved the following enhanced dissipation estimate.
\begin{thm}\label{teo:bedrossian enhanced}
In the setting of Theorem \eqref{teo:bedrossian}, for any $p\geq 2$, let $\hat{\gamma}(1,p)$ as in Theorem \eqref{teo:bedrossian}. For all $\eta\in(0,1]$, and for all $u^\mathrm{in}\in\mathbf{H}$ there is a $\mathbb{P}$-a.s. finite random constant $D_\eta'(\omega,\rho^\mathrm{in}):\Omega\times\mathbf{H}\mapsto[1,\infty)$ (also depending on $p$) such that the solution to \eqref{eq:ad}, satisfies for all $\rho^\mathrm{in}\in H^s(\T^d)$ (mean-zero) and $u^\mathrm{in}\in\mathbf{H}$,
\begin{equation}\label{eq:stima l2 bedrossian}
\|\rho(t,\cdot)\|_{L^2}\leq D_\eta'(\omega,u^\mathrm{in})\eta^{-1}e^{-\hat{\gamma} t}\|\rho^\mathrm{in}\|_{L^2},
\end{equation}
where $D_\eta'(\omega,\rho^\mathrm{in})$ also satisfies the following $\eta$-independent bound for $\beta$ sufficiently large (independent of $u,p,\eta$) and for all $\kappa>0$,
\begin{equation}
\mathbb{E}[D_\eta'(\cdot,\rho^\mathrm{in})^p]\lesssim_{\kappa,p}(1+\|u^\mathrm{in}\|_{\mathbf{H}})^{p\beta}\exp\left(\kappa\|\curl u^\mathrm{in}\|_{L^2}\right)^2.
\end{equation}
\end{thm}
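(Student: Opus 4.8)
The plan is to derive the $L^2$ enhanced dissipation estimate \eqref{eq:stima l2 bedrossian} from the uniform-in-$\eta$ exponential mixing estimate of Theorem \ref{teo:bedrossian}, following the abstract \emph{mixing implies enhanced dissipation} philosophy of \cite{CZDE}. The crucial input, which I would take as a black box, is that the negative Sobolev norm of the solution $\rho$ of \eqref{eq:ad} decays exponentially at the deterministic rate $\hat\gamma$ with a random constant $D_\eta(\omega,u^\mathrm{in})$ that is \emph{uniform} in $\eta\in[0,1]$ and enjoys the moment bound \eqref{aggiusto 2}. The entire task is to convert this decay of a weak norm into decay of the $L^2$ norm at essentially the same rate, paying only an algebraic $\eta^{-1}$ prefactor and preserving the integrability of the random constant.

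The first ingredient is the basic energy balance for \eqref{eq:ad}. Testing against $\rho$ and using $\dive U=0$ gives
\begin{equation}
\frac{\de}{\de t}\|\rho(t,\cdot)\|_{L^2}^2 = -2\eta\|\nabla\rho(t,\cdot)\|_{L^2}^2 \le 0,
\end{equation}
so the $L^2$ norm is non-increasing, while a higher-order test (using that $\mathbf H\hookrightarrow C^3$, so $\|\nabla U\|_{L^\infty}$ is finite) yields a Gr\"onwall control of $\|\rho\|_{H^1}$ whose constant is governed by $\|\nabla U\|_{L^\infty}$. The bridge between the weak-norm mixing and the energy is the interpolation inequality $\|\rho\|_{L^2}^2\le\|\rho\|_{H^{-1}}\|\rho\|_{H^1}$.

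Next I would run a contraction-over-time-windows argument in the spirit of \cite{CZDE}. Fixing a window length $\tau\simeq 1/\hat\gamma$, the mechanism on $[t_0,t_0+\tau]$ is a dichotomy. Either a fixed fraction of the energy is dissipated, i.e. $\eta\int_{t_0}^{t_0+\tau}\|\nabla\rho(s,\cdot)\|_{L^2}^2\,\de s\gtrsim\|\rho(t_0,\cdot)\|_{L^2}^2$, in which case the energy identity directly yields $\|\rho(t_0+\tau,\cdot)\|_{L^2}\le\theta\|\rho(t_0,\cdot)\|_{L^2}$ with $\theta<1$; or this dissipation integral is small, which, since the $L^2$ norm is monotone, forces $\|\rho(s,\cdot)\|_{H^1}$ to remain comparable to $\|\rho(t_0,\cdot)\|_{L^2}$ throughout the window. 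In the latter regime the interpolation inequality together with the exponential decay of $\|\rho\|_{H^{-1}}$ supplied by Theorem \ref{teo:bedrossian} again forces $\|\rho(t_0+\tau,\cdot)\|_{L^2}$ to drop by a fixed factor. Iterating the contraction across $\sim\hat\gamma t$ consecutive windows produces the decay $e^{-\hat\gamma t}$. The algebraic prefactor $\eta^{-1}$ appears at the crossover between the mixing length scale and the dissipative scale: the mixing must push the weak norm below a threshold of order $\eta$ before diffusion takes over, and optimising this threshold costs exactly one power of $\eta^{-1}$.

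The hard part is \emph{not} the transfer argument above, which is essentially deterministic and soft, but the uniform-in-$\eta$ exponential mixing of Theorem \ref{teo:bedrossian}: proving that the random Navier--Stokes flow mixes at a rate independent of the diffusivity, with a random constant having all moments finite, rests on the quantitative random dynamical systems theory for the stochastic $2$D Navier--Stokes equations (positivity of a top Lyapunov exponent and exponential decay of correlations via Malliavin-calculus non-degeneracy) developed in \cite{BBP, BBP2}. I would therefore quote that estimate and devote the remaining effort to making the windowed contraction quantitative in $\eta$ and to the bookkeeping of the random constant: since $D_\eta'$ is assembled from $D_\eta$ and from the deterministic constants of the energy and interpolation steps (which involve $\|\nabla u\|_{L^\infty}$, controlled in moments via Proposition \ref{propmu1}), its $p$-th moment bound follows from \eqref{aggiusto 2} by H\"older's inequality.
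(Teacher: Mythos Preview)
The paper does not prove this statement: it is simply recalled as \cite[Theorem 1.4]{BBP3}, with no argument supplied. So there is no ``paper's own proof'' to compare against.

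That said, your sketch is, in spirit, the correct route --- and it is essentially the one taken in \cite{BBP3}: the enhanced $L^2$ dissipation is deduced from the uniform-in-$\eta$ mixing of Theorem~\ref{teo:bedrossian} via the \cite{CZDE} machinery. A couple of imprecisions are worth flagging. First, in the ``low dissipation'' branch of your dichotomy you say that $\|\rho(s,\cdot)\|_{H^1}$ ``remains comparable to $\|\rho(t_0,\cdot)\|_{L^2}$''; this is not what the small-dissipation hypothesis gives. It only gives a time $s^*\in[t_0,t_0+\tau]$ at which $\|\nabla\rho(s^*,\cdot)\|_{L^2}\lesssim(\eta\tau)^{-1/2}\|\rho(t_0,\cdot)\|_{L^2}$, and this $\eta^{-1/2}$ is one source of the algebraic loss. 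Second, the mixing bound \eqref{eq:exp mix} controls $\|\rho(t)\|_{H^{-s}}$ by $\|\rho^\mathrm{in}\|_{H^s}$, not by $\|\rho^\mathrm{in}\|_{L^2}$; to iterate the window argument with initial data only in $L^2$ you need a parabolic smoothing step (costing another $\eta^{-1/2}$) to upgrade $L^2$ to $H^1$ at the start of each block. Your Gr\"onwall remark does not do this. Put together, these two losses account for the $\eta^{-1}$ prefactor, which you identify but attribute somewhat vaguely. The moment bound on $D_\eta'$ also requires more care than H\"older on \eqref{aggiusto 2}: the random constant accumulates a factor from each window, and one must ensure this product has finite moments uniformly in $\eta$.
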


\subsection{Fast reconnection}
We now have all the ingredients in order to prove our last theorem.

\begin{mainthm4}[Fast reconnection]
Let $r > 5/2$. There exists $\eta_0 >0$ sufficiently small such that there exists a family of divergence-free vector fields $(u^\mathrm{in}_\eta,b^\mathrm{in}_\eta)$ in $H^r(\T^3)$ with $0<\eta<\eta_0$, and a stochastic force $f\in L^\infty((0,T);H^r(\T^3))$, defined on some given filtered probability space $(\Omega,\mathcal{F},\mathcal{F}_t,\mathbb{P})$, such that the following holds: for any $\delta>0$ there exists a subset $\bar \Omega\subset \Omega$ with $\mathbb{P}(\Omega\setminus\bar\Omega)\leq \delta$ such that the unique local solution $(u_{\eta},b_{\eta},p_{\eta})$ of \eqref{eq:mhd2} arising from $(u^\mathrm{in}_\eta,b^\mathrm{in}_\eta)$ shows fast magnetic reconnection with rate 
$$
\lambda_\mathrm{rec}(\eta) := \frac{C}{|\ln \eta|},
$$
for some positive deterministic constant $C>0$, and for all realizations $\omega\in\bar\Omega$. More precisely, the magnetic field $b_{\eta}(t,\cdot)$ has an equilibrium point for times $t \in [0, c_1]$ and has no equilibrium points for times 
$$
t \in \left[C_\delta |\ln \eta|, 2 C_\delta|\ln \eta|\right].
$$  
The statement is structurally stable, in the sense that it still holds 
for all solution with (divergence-free) initial data $(\tilde u^\mathrm{in}_\eta, \tilde b^\mathrm{in}_\eta)$
such that 
\begin{equation}\label{eqPertIntroSection}
\| \tilde u^\mathrm{in}_\eta - u^\mathrm{in}_\eta\|_{H^r} 
 + \|\tilde b^\mathrm{in}_\eta -  b^\mathrm{in}_\eta \|_{H^r} \leq c(r, \eta)
\end{equation}
with $c(r, \eta)$ sufficiently small.
\end{mainthm4}
\begin{proof}
We divide the proof in several steps. Since the arguments are analogous to those in the proof of Theorem~A, we omit some details.\\
\\
\underline{Step 1} Reference solution.\\
\\
The crucial difference with respect to the proof of Theorem A is the choice of the reference solution we made in the Step 1. Specifically, we consider a solution of the form
\begin{equation}
\tilde u=(U_1,U_2, U_3),\qquad \tilde b=(0,0,\tilde b_3),
\end{equation}
where 
\begin{itemize}
\item $U=(U_1(t,x_1,x_2),U_2(t,x_1,x_2)):[0,T]\times\T^2\to\R^2$ is the $\mathbb{P}$-a.s. unique, global-in-time solution of the forced Navier-Stokes equation \eqref{eq:ns} (provided by Proposition \ref{propmu}) with forcing term $(f_1,f_2)$ as in \eqref{forza} and initial datum a divergence-free vector field $U^\mathrm{in}:\T^2\to\R^2$,
\item $U_3=U_3(t,x_1,x_2)$ is the unique solution of the linear problem
\begin{equation}
\begin{cases}
\partial_t U_3+U\cdot \nabla U_3=\Delta U_3+f_3,\\
U_3(0,\cdot)=U^\mathrm{in}_3,
\end{cases}
\end{equation}
where $U^\mathrm{in}_3:\T^2\to\R$ and $f_3:\T^3\to\R$ are smooth functions. Note that, since the first two components of $\tilde u$ depend only on $(x_1,x_2)$, the divergence-free condition on $\tilde u$ imposes that also $U_3$ is a function of $(x_1,x_2)$. Indeed, the pressure is obtained as usual by solving the elliptic equation 
$$
-\Delta p=\dive(U\cdot\nabla U),
$$
and thus it does not depend on $x_3$.
\item $\tilde b_3:[0,T]\times\T^2\to \R$ is the solution of the advection-diffusion equation
\begin{equation}\label{richiamo finale}
\begin{cases}
\partial_t \tilde b_3+U\cdot\nabla \tilde b_3=\eta\Delta\tilde b_3,\\
\tilde b_3(0,\cdot)=\tilde{b}^\mathrm{in}_3,
\end{cases}
\end{equation}
where the function $\tilde{b}^\mathrm{in}_3:\T^2\to\R$ is chosen so that:
\begin{itemize}
\item there exists $(x_1^*,x_2^*)\in \T^2$ such that $\tilde{b}^\mathrm{in}_3(x_1^*,x_2^*)=0$ and $\partial_{x_1}\tilde{b}^\mathrm{in}_3(x_1^*,x_2^*)\simeq M$;
\item it has strictly positive average, i.e. for some given $M>0$
\begin{equation}\label{fkaldkfalkfmkasl}
\langle \tilde{b}^\mathrm{in}_3 \rangle = M,\quad \mbox{and }\quad\| \tilde{b}^\mathrm{in}_3\|_{H^r} \simeq M;
\end{equation}
\item it can be written as $\tilde{b}^\mathrm{in}_3:=M+\rho^\mathrm{in}$, with $\rho^\mathrm{in}:\T^2\to\R$ having zero average.
\end{itemize}
\end{itemize}
Note that $(\tilde u,\tilde b)$ is a global-in-time solution of \eqref{eq:mhd2}. We then consider the initial datum
\begin{align*}
b^\mathrm{in}(x_1, x_2, x_3) &= \tilde{b}^\mathrm{in}(x_1, x_2) +  m^\mathrm{in} (x_1, x_2, x_3),\\
u^\mathrm{in} &= \tilde{u}^\mathrm{in},
\end{align*}
where $m^\mathrm{in} (x_1, x_2, x_3)$ is a zero average smooth divergence-free vector field with a regular equilibrium point at $(x_1^*,x_2^*,x_3^*)\in \T^3 $ and such that 
$$
\| m^\mathrm{in} \|_{H^r} = \varepsilon \ll 1,
$$ 
for some $r > 5/2$. As for Theorem A, an example of initial datum is given by considering
$$
\tilde b^\mathrm{in}_3=M-2M\cos \left(x_1 -\frac{\pi}{3}- x_1^*\right),\qquad m^\mathrm{in}(x_1,x_2,x_3)=\e(\sin (x_2- x_2^*), \sin (x_3 - x_3^*), 0).
$$
Without loss of generality we can also assume that $\|u^\mathrm{in}\|_{H^r}\simeq 1$.\\
\\
\underline{Step 2} Perturbative argument\\
\\
We write the solution of \eqref{eq:mhd2} as
$$
b=\tilde b+m,\quad u=\tilde u+v,
$$
where the pair $(v, m)$ satisfies the system 
\begin{equation}\label{eq:mhdPerturbative2}
\begin{cases}
\partial_t v + (\tilde{u} \cdot \nabla)v  + (v\cdot \nabla)\tilde{u} + (v\cdot \nabla)v+\nabla P_v=\Delta v+
(\tilde{b} \cdot \nabla)m + ( m \cdot \nabla)\tilde{b} + (m \cdot \nabla)m,\\
\partial_t m + (\tilde{u} \cdot \nabla) m + (v \cdot \nabla) \tilde{b} + (v \cdot \nabla) m   = 
 (\tilde{b} \cdot \nabla) v + (m \cdot \nabla) \tilde{u} + (m \cdot \nabla) v + \eta \Delta{m},\\
\dive v=\dive m=0,\\
v(0,\cdot)=0,\hspace{0.3cm} m(0,\cdot)=m^\mathrm{in}.
\end{cases}
\end{equation}
Notice that there is no force in \eqref{eq:mhdPerturbative2} and, moreover, it is a system of determinist PDEs with random coefficients $(\tilde u,\tilde b)$.
As observed in Remark \ref{rem:stabilità viscoso}, the proof of Theorem \ref{thm:stab} in the inviscid framework carries over to the viscous case, so that the same stability result applies to \eqref{eq:mhd2}. The only point to be careful about is that all the estimates hold almost surely, as the coefficients $\tilde u,\tilde b$ in the system \eqref{eq:mhdPerturbative2} are random. In order to avoid dealing explicitly with random coefficients and to keep all constants deterministic, we select a full-measure set $\bar \Omega\subset\Omega$ such that all the estimates are valid uniformly on $\bar \Omega$. First, we define the set
\begin{equation}\label{restrizione 1}
\Omega_1=\left\{\omega\in\Omega: \sup_{t\in[0,T]}\|\tilde u(t,\cdot)\|_{H^r}^2+ \int_0^T\|\tilde u(\tau,\cdot)\|_{H^{r+1}}^2\de\tau>R\right\}.
\end{equation}
By Chebyshev inequality and the bounds in Proposition \ref{propmu1} we have that
\begin{equation}\label{scelta R}
\mathbb{P}(\Omega_1)\leq \frac{CT^{2r+\frac32}}{R},
\end{equation}
which can be made as small as we want by taking $R\gg 1$. Moreover, if consider the set
$$
\Omega_2:=\{\omega\in\Omega: D_\eta'(\omega,u^\mathrm{in})>R\},
$$
by \eqref{aggiusto 2} in Theorem \ref{teo:bedrossian enhanced} we also have that
$$
\mathbb{P}(\Omega_2)\leq \frac{C}{R}.
$$
for some positive constant $C$ which does not depend on $\eta$. Thus, we define the set
$$
\bar\Omega:=\Omega\setminus(\Omega_1\cup \Omega_2),
$$
and in what follows, we restrict to a fixed realization $\omega\in\bar\Omega$, and all constants will be understood as deterministic. With this convention, the short-time analysis proceeds exactly as in the proof of Theorem~A. We now turn to the study of the reconnection time.
By following the proof of Theorem A, the crucial part is to prove that for every $\frac{1}{\lambda_\mathrm{rec}(\eta)} \lesssim t < T$, we have that
\begin{equation}\label{bhbhbhbhbhbh}
\| b_3(t,\cdot) - \langle \tilde{b}_3 \rangle  \|_{L^\infty} <  \langle \tilde{b}_3 \rangle.
\end{equation}
Below we will consider $\e$ small enough so that the solution $(u,b)$ is defined up to a given time $T\sim \frac{1}{\eta}$, i.e. greater than the reconnection time.\\
\\
\underline{Step 3} Size of the perturbation\\
\\
To prove \eqref{bhbhbhbhbhbh}, we decompose $b_3=M+\rho$, where the zero-average part $\rho$ is the solution of
\begin{equation}\label{eq:ad dimostrazione stocastica}
\begin{cases}
\partial_t\rho+U\cdot\nabla\rho=\eta\Delta\rho,\\
\rho(0,\cdot)=\rho^{\mathrm{in}},
\end{cases}
\end{equation}
with $\rho^{\mathrm{in}}$ being the zero-average part of $\tilde b_3^\mathrm{in}$, with $\|\rho^\mathrm{in}\|_{H^r}\simeq M$. By definition of $b$ we have that
\begin{equation}
\| b_3(t,\cdot) - \langle \tilde{b}_3 \rangle  \|_{L^\infty}\leq \tilde C(\|m(t,\cdot)\|_{H^r}+\|\rho(t,\cdot)\|_{H^r}),
\end{equation}
with $\tilde C$ being the constant in the Sobolev embedding. We now show the following estimate
\begin{equation}\label{stima capocchia}
\|\rho(t,\cdot)\|_{H^s}^2+\eta\int_0^t\|\rho(\tau,\cdot)\|_{H^{s+1}}^2\de\tau\leq C \frac{M^2}{\eta^s}R^s.
\end{equation}
We proceed by induction on integer values of $s$: for $s=0$ the above inequality is the standard energy estimate. We assume that \eqref{stima capocchia} holds for $s-1$. Then, we apply the operator $\langle D\rangle^s$ to the equation and we use that $U$ is divergence-free to deduce
\begin{align*}
\frac12\frac{\de}{\de t}\|\rho(t,\cdot)\|_{H^s}^2+\eta\|\rho(t,\cdot)\|_{H^{s+1}}^2&\leq \int_{\T^2}|[U(t,\cdot), \langle D\rangle^s]\rho(t,x)||\nabla \langle D\rangle^s\rho(t,x)|\de x\\
&\leq \frac{\eta}{2}\|\rho(t,\cdot)\|_{H^{s+1}}^2\\
&+\frac{2}{\eta}\left(\|\nabla U(t,\cdot)\|_\infty\|\rho(t,\cdot)\|_{H^{s-1}}+\|U(t,\cdot)\|_{H^s}\|\rho(t,\cdot)\|_\infty\right)^2
\end{align*}
where in the last inequality we used Young's inequality and Lemma \ref{lem:kato ponce}. Then, we integrate in time and we use the induction hypothesis and (for any $s\leq r+1$) \eqref{restrizione 1} to get
\begin{equation}
\|\rho(t,\cdot)\|_{H^s}^2+\eta\int_0^t\|\rho(t,\cdot)\|_{H^{s+1}}^2\leq \frac{2M^2}{\eta}R\left(1+\frac{R^{s-1}}{\eta^{s-1}}\right),
\end{equation}
which implies the result. Notice that, if we do not use the viscosity, the estimate is exponential in $R$. From Theorem B' we also have that
\begin{equation}\label{confronto stoc}
\|\rho(t,\cdot)\|_{H^s}^2\leq CM^2 \frac{R^s}{\eta^s}\frac{e^{-2\hat \gamma t}}{\eta^2},
\end{equation}
and then, we observe that the estimate in \eqref{stima capocchia} is better than the one in \eqref{confronto stoc} if $t\lesssim |\log \eta|$.\\

Now we want to choose $\e$ in order to apply, in a convenient way, Theorem \ref{thm:stab} together with Remark \ref{rem:stabilità viscoso}.
We set $R=T^{2(r+2)}$ and we consider $T\sim \frac{1}{\eta}$, so that by taking
\begin{equation}\label{epsilon stocastico}
\e\leq \frac{CM}{2\tilde C}\eta^{\frac{r+1}{2}}e^{-\frac{CM}{\eta^{\frac32(r+1)}}},
\end{equation}
we have that the solution is defined up to $T\sim\frac{1}{\eta}$ and
\begin{equation}\label{stima m stocastica 0}
\|m(t,\cdot)\|_{H^r}\leq \frac{M}{2\tilde C}.
\end{equation}
Lastly, since $\rho$ satisfies \eqref{eq:stima l2 bedrossian}, again by Theorem B' with $r>5/2$ and $\Lambda(\eta)=\hat{\gamma}$ to obtain that
\begin{equation} \label{stima rho stocastica}
\|\rho(t,\cdot)\|_{H^r}\leq \frac{C R^{\frac{r}{2}}}{\eta^\frac{r+3}{2}}e^{-\hat{\gamma} t}\|\rho^\mathrm{in}\|_{H^r}.
\end{equation}
Then, by defining $t^*$ as
\begin{equation}
t^{*} = \frac{1}{\hat\gamma}\ln\left(\frac{2 CR^{\frac{r}{2}}\tilde C }{\eta^{\frac{r+3}{2}}}\right),
\end{equation}
we have that for any $t\geq t^*$ it holds
\begin{equation}
\tilde C \| \rho(t,\cdot) \|_{H^r} \leq   \frac{M}{2}.
\end{equation}
In conclusion, considering our choice of $R$ and $T$, we have obtained that for $1/\lambda_\mathrm{rec}(\eta)\lesssim t<T$, with
$$
\lambda_\mathrm{rec}(\eta):=\frac{C(r,\hat\gamma)}{|\ln\eta|},
$$
the magnetic field does not have any zero. This concludes the proof.
\end{proof}

\subsection*{Acknowledgements}
G.C. is supported by INdAM-GNAMPA and by the projects PRIN2020 ``Nonlinear evolution PDEs, fluid dynamics and transport equations: theoretical foundations and applications” and PRIN2022 ``Classical equations of compressible fluid mechanics: existence and properties of non-classical solutions''. 
R.L. is supported by the Basque Government through the program BERC 2022-2025 (BCAM), by the project PID2021-123034NB-I00 funded by MCIN/ AEI /10.13039/501100011033, by the Severo Ochoa accreditation CEX2021-001142-S (BCAM) and by the Ramon y Cajal fellowship RYC2021-031981-I.\\
The authors would like to thank Emanuele Tassi for his insightful comments and for providing relevant literature on magnetic reconnection, as well as Michele Coti Zelati, Michele Dolce, Lucas Kaufmann and Lorenzo Pescatore for their valuable discussions on the topics addressed in this work. 

\section*{Data Availability Statement}
This manuscript has no associated data.

\end{document}